\documentclass{amsart}[12pt]

\usepackage{yfonts} %
\usepackage{amssymb} %
\usepackage{amsthm}
\usepackage{array}
\usepackage{booktabs}%
\usepackage{hhline}%
\usepackage{xy} %
\usepackage{epsfig}%
\usepackage{color}%
\usepackage{upgreek}
\usepackage[english]{babel}
\usepackage{epigraph}%
\usepackage{fancybox}%
\setcounter{totalnumber}{2}
\usepackage{shadow}
\usepackage{afterpage}
\usepackage{mathrsfs}
\usepackage{enumitem}
\usepackage{subcaption}
\usepackage{graphicx}
\usepackage{type1cm}
\usepackage{eso-pic}
\usepackage{color}
\usepackage{upgreek}
\usepackage[foot]{amsaddr}

\newtheorem{theorem}{Theorem}
\newtheorem{lemma}{Lemma}
\newtheorem{proposition}{Proposition}
\theoremstyle{definition}
\newtheorem{definition}{Definition}
\newtheorem{remark}{Remark}
\newtheorem{example}{Example}

\theoremstyle{plain}
\newtheorem{corollary}{Corollary}

\newcommand{\vt}{\vspace{.1cm}}

\newcommand{\R}{\mathbb{R} }

\newcommand{\C}{\mathbb{C} }

\newcommand{\h}{\mathbb{H} }
\newcommand{\s}{\mathbb{S} }

\renewcommand{\rho}{\varrho}
\newcommand{\g}{\nabla}

\usepackage{amsmath}% http://ctan.org/pkg/amsmath

\newcommand{\overbar}[1]{\mkern 1.5mu\overline{\mkern-1.5mu#1\mkern-1.5mu}\mkern 1.5mu}

\newcommand{\ssr}{}
\newcommand{\ssl}{\scriptscriptstyle{L}}
\renewcommand{\theta}{\Theta}
\renewcommand{\tau}{u}

%%%%%%%%%%%%%%%%%%%%%%%%%%%%%%%%%%%%%%%%%%%%%%%%%%%%%%%

%%%%%%%%%%%%%%%%%%%%%%%%%%%%%%%%%%

\usepackage{scalerel}[2016-12-29]

%%%%%%%%%%%%

\begin{document}

\title{Helicoids and Catenoids in $M\times\R$}
\author{Ronaldo F. de Lima and Pedro Roitman}
\address[A1]{Departamento de Matem\'atica - Universidade Federal do Rio Grande do Norte}
\email{ronaldo@ccet.ufrn.br}
\address[A2]{Departamento de Matem\'atica - Universidade de Brasília}
\email{roitman@mat.unb.br}
\subjclass[2010]{53B25 (primary), 53C24,  53C42 (secondary).}
\keywords{helicoid -- catenoid -- product space.}

\begin{abstract}
Given an arbitrary $C^\infty$ Riemannian manifold $M^n$, we consider the problem
of introducing and constructing minimal hypersurfaces in $M\times\mathbb{R}$ which
have the same fundamental properties of the standard helicoids
and catenoids of Euclidean space $\mathbb{R}^3=\R^2\times\mathbb{R}$. Such hypersurfaces are defined
by imposing  conditions on their height functions and horizontal sections,
and then called  \emph{vertical helicoids} and \emph{vertical catenoids}.
We establish that vertical helicoids in $M\times\mathbb{R}$ have the same
fundamental uniqueness properties of the helicoids in $\mathbb{R}^3.$
We provide several examples of properly embedded vertical helicoids in the case where $M$ is one of the simply connected space
forms. Vertical helicoids which are entire graphs of functions on ${\rm Nil}_3$ and ${\rm Sol}_3$ are also presented.
We show that vertical helicoids of $M\times\R$ whose horizontal sections are totally geodesic in $M$  are locally given by
a "twisting" of a fixed totally geodesic hypersurface of $M.$
We give a local characterization of hypersurfaces of $M\times\mathbb{R}$ which have the gradient of
their height functions as a principal direction. As a consequence, we prove that vertical catenoids exist
in $M\times\mathbb{R}$ if and only if $M$ admits families of isoparametric hypersurfaces.
If so, properly embedded vertical catenoids can be constructed through the solutions of a certain first
order linear differential equation. Finally, we give a complete
classification of the hypersurfaces of $M\times\mathbb{R}$ whose angle function is constant.
\end{abstract}

\maketitle

\section{Introduction}

In this paper, we address the problem of defining and
constructing minimal hypersurfaces in $M\times\R$ with special properties,
where $M^n$ is an arbitrary $C^\infty$ Riemannian manifold.
We will focus our attention on those fundamental properties of the standard helicoids and catenoids
of Euclidean space $\R^3=\R^2\times\R,$
so that the corresponding  minimal hypersurfaces of $M\times\R$ will be called
\emph{vertical helicoids} and \emph{vertical catenoids}.

More specifically, these hypersurfaces will be introduced by imposing conditions on their
horizontal sections (intersections with $M\times\{t\}, \,t\in\R$), and also on the trajectories of the
gradient of their height functions (\emph{height trajectories}, for short).
Vertical helicoids, for instance,  are defined as those hypersurfaces
of $M\times\R$ whose horizontal sections are minimal hypersurfaces of $M\times\{t\}$,
and whose height trajectories  are asymptotic lines.
Vertical catenoids, in turn, have nonzero constant mean curvature hypersurfaces
as horizontal sections, and lines of curvature
as height trajectories.

In this setting, we show that vertical helicoids of $M\times\R$  have
all the classical uniqueness properties of the standard helicoids of $\R^3$. Namely, they
are minimal hypersurfaces of $M\times\R$ and, as such, they are the only ones which
are foliated by horizontal minimal hypersurfaces. They are also the only minimal
local graphs of harmonic functions (defined on domains in $M$), and the only
minimal non totally geodesic hypersurfaces of $M\times\R$ whose spacelike pieces are maximal with respect
to the standard Lorentzian product metric of $M\times\R$.

This last property extends
the  analogous classical result, set in Lorentzian space $\mathbb{L}^3,$
established by O. Kobayashi \cite{kobayashi}.
In our approach, we briefly  consider the class of hypersurfaces of $M\times\R$ whose mean
curvatures with respect to  both the Riemannian and Lorentzian metrics of $M\times\R$ coincide.
We call them \emph{mean isocurved}.
These hypersurfaces have been studied by Albujer-Caballero \cite{albujer1} in the case
where the ambient space is  $\mathbb{L}^{3}$  (see \cite{albujer2} as well).
Actually, during the preparation of this paper, we became acquainted with the recent works
by Alarcón-Alias-Santos \cite{alarcon-alias-santos} and Albujer-Caballero \cite{albujer-caballero}
which have some overlapping with ours on this subject.
Mean isocurved surfaces in $\h^2\times\R$ and $\s^2\times\R$ have also been considered
by Kim et al in \cite{kimetal}.
% of mean isocurved hypersurfaces.

Concerning examples of  vertical helicoids in $M\times\R$,
we show that they can be constructed
by considering one-parameter groups of isometries of $M$ acting on
suitable  minimal hypersurfaces. %of $M.$
When $M$ is one of the simply connected space forms,
this method  allows us to construct properly
embedded minimal vertical helicoids in $M\times\R$
which are foliated by vertical translations of totally geodesic hypersurfaces of $M.$
We also construct properly embedded vertical helicoids
in $\R^n\times\R$ and $\h^3\times\R$ which are foliated by vertical translations of
helicoids of $\R^{n}$ and
$\h^3,$ respectively. In the same way, we construct  vertical helicoids
in $\s^3_\delta\times\R,$ where $\s^3_\delta$ is a Berger sphere.
Finally, we  obtain a family of properly embedded
minimal vertical helicoids in $\s^{2n+1}\times\R$ which are foliated by $2n$-dimensional Clifford
tori, and also a corresponding family of
vertical helicoids in $\R^{2n+2}\times\R$ (previously constructed
by Choe and Hoppe \cite{choe-hoppe}), whose horizontal sections
are the cones of these tori in $\R^{2n+2}.$

Other examples of vertical helicoids that we
give are graphs of harmonic and horizontally homothetic
functions defined on domains of certain manifolds $M,$ such as the Nil and Sol
3-dimensional spaces (see Section \ref{sec-graphs}). We remark that, all the vertical helicoids
presented here, graphs or not, contain  spacelike zero mean isocurved open sets.

We also give a local characterization of vertical helicoids of $M\times\R$
with totally geodesic horizontal sections and  nonvanishing angle function  by showing that
each of its points has a neighborhood which can be expressed as  a ``twisting'' of a totally geodesic
hypersurface of $M$ (see Section \ref{subsec-constructionhelicoids} for more details).

Regarding vertical catenoids in $M\times\R$, their study  naturally leads to the consideration of
a broader class of hypersurfaces of $M\times\R;$ those which have the gradient of their height functions as
a principal direction. These hypersurfaces have been given a local characterization by R. Tojeiro \cite{tojeiro}
assuming that $M$ is one of the simply connected space forms. Here, we extend this result to general products
$M\times\R$ and conclude that a necessary and sufficient condition for the existence of minimal
or constant mean curvature (CMC) hypersurfaces in $M\times\R$ with this property (in particular,
vertical catenoids) is that $M$ admits families of isoparametric hypersurfaces.

This extension of Tojeiro's result, in fact, provides a way  of constructing such minimal
and CMC hypersurfaces (as long as they are admissible) by solving a first order linear differential equation.
This can be performed, for instance, when $M$ is any of the simply connected space forms, a Damek-Ricci
space or any of the simply connected 3-homogeneous manifolds with isometry group of dimension 4:
$\mathbb{E}(k,\uptau), \, k-4\uptau^2\ne 0.$ This result will also be applied for constructing properly embedded
vertical catenoids in $M\times\R$ when $M$ is a Hadamard manifold or the sphere $\s^n.$
As a further application, we give a complete classification  of hypersurfaces of $M\times\R$ whose angle function is constant.

The paper is organized as follows. In Section \ref{sec-preliminaries},
we set some notation and formulae. In Section \ref{sec-basiclemmas},
we introduce mean isocurved hypersurfaces and  establish some basic lemmas. We discuss on vertical helicoids
in Section \ref{sec-helicoids}. In Section \ref{sec-canonicaldirection}, we consider
hypersurfaces of $M\times\R$ which have the gradient of their  height functions as a principal direction.
Finally, in  Section \ref{sec-catenoids}, we discuss on vertical catenoids.

\section{Preliminaries} \label{sec-preliminaries}
Throughout this paper,  $M$ will denote  an arbitrary $n(\ge 2)$-dimensional $C^\infty$ Riemannian manifold.
For such an $M,$ we will consider the product manifold $M\times\R$ with its standard differentiable
structure. We will  set  $T(M\times\R)=TM\oplus T\R$ for the tangent bundle of $M\times\R,$
where $TM$ and $T\R$ stand for the tangent bundles of $M$ and $\R,$ respectively.
We will  endow $M\times\R$ with  the  Riemannian product metric:
%$\langle \,,\, \rangle,$ which is   defined as
\[
\langle \,,\,\rangle= \langle \,,\,\rangle_{\scriptscriptstyle M}+dt^2.
\]

We shall write $\pi_{\scriptscriptstyle M}$ and $\pi_{\scriptscriptstyle\R}$ for the projection of $M\times\R$
on its first and second factors, respectively, and  $\partial_t$ for the gradient
of $\pi_{\scriptscriptstyle\R}$ with respect to the Riemannian metric $\langle \,,\,\rangle.$
We remark that $\partial_t$ is a parallel field on $M\times\R.$

Let $\Sigma$ be an orientable  hypersurface  of  $M\times\R.$
Given a unit normal field $N\in T\Sigma^\perp\subset TM,$
we will denote by  $A$ the shape operator of $\Sigma$ relative to $N,$ that is,
\[
AX=-\overbar\nabla_XN, %\quad\text{and}\quad \sigma(X,Y)=\langle AX,Y\rangle, \,\,  X, Y\in T\Sigma,
\]
where $\overbar\nabla$ stands for the Levi-Civita connection of $M\times\R$.
The gradient of a differentiable function $\zeta$ on $\Sigma$ will be denoted by $\nabla\zeta.$

The \emph{height function} $\xi$ and the \emph{angle function} $\theta$ of $\Sigma$
are defined as
\[\xi:=\pi_{\scriptscriptstyle\R}|_{\Sigma} \quad\text{and}\quad  \theta:=\langle N,\partial _t\rangle.\]
Regarding these functions, the following fundamental identities hold:
\begin{equation}  \label{eq-defgradxi}
\nabla\xi=\partial_t-\theta N\quad\text{and}\quad \nabla\theta=-A\nabla\xi,
\end{equation}
where the second one follows  from the fact that
$\partial_t$ is parallel in $M\times\R.$
We point out that $\theta_{\ssr}\in [-1,1],$ and %$\theta_{\ssl}\in (-\infty, -1].$\, It is also easily seen that
that $x\in\Sigma$ is a critical point of the height function $\xi$ if and only if $\theta^2(x)=1.$ If so, we say
that $x$ is a \emph{horizontal point} of \,$\Sigma.$ Any field $X\in TM\subset T(M\times\R)$ will  be
called \emph{horizontal} as well.

\section{Basic Lemmas}  \label{sec-basiclemmas}

%In this section we will establish some basic results which
%will be referred to along the paper.

Given a product manifold $M\times\R,$  for each $t\in\R,$ we will call
the submanifold $M_t:=M\times\{t\}$ a \emph{horizontal section} of $M\times\R.$
%A hypersurface $\Sigma$ of $M\times\R$ which is contained
%in a horizontal section of $M\times\R$ will be called \emph{horizontal}.
If $\Sigma$ intersects a horizontal section $M_t$ transversally, we call
the set
\[\Sigma_t:=M_t\cap\Sigma\]
a \emph{horizontal section} of the hypersurface $\Sigma.$

Notice that, for all $t\in\R,$ $M_t$ is isometric to $M,$
and that any horizontal section $\Sigma_t$ is a hypersurface of $M_t$\,.
In this setting, it is easily checked that
\begin{equation}\label{eq-horizontalnormal}
\eta:=\phi_{\ssr}(N_{\ssr}-\theta_{\ssr}\partial_t), \,\,\,\phi=-(1-\theta^2)^{-1/2},
\end{equation}
is a well defined  unit normal field to $\Sigma_t$\,.

Now, denote the shape operator of \,$\Sigma_t$ with respect to $\eta$  by $A_\eta$\,, and set
$H$ and $H_{\Sigma_t}$ for the  (non normalized) mean curvature functions of $\Sigma$ and $\Sigma_t$, respectively.

\begin{lemma} \label{lem-horizontalsection}
Let $\Sigma_t$ be a horizontal section of a  hypersurface $\Sigma$ of $M\times\R.$
Then
\[
\langle A_\eta X,Y\rangle=\phi\langle AX,Y\rangle \,\,\, \forall X, Y\in T\Sigma_t\,.
\]
As a consequence,  for $T=\nabla\xi/\|\nabla\xi\|$, the following equality holds along $\Sigma_t$:
\begin{equation} \label{eq-meancurvatureslices}
H_{\Sigma_{t}}=\phi_{\ssr}(H_{\ssr}-\langle AT,T\rangle).
\end{equation}
\end{lemma}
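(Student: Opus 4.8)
The plan is to compute the shape operator $A_\eta$ of $\Sigma_t$ directly from its definition, namely $A_\eta X = -\widehat\nabla_X \eta$ for $X \in T\Sigma_t$, where $\widehat\nabla$ is the Levi-Civita connection of $M_t$, and then compare with $AX = -\overbar\nabla_X N$. Since $X$ is horizontal and tangent to $\Sigma_t \subset \Sigma$, it is in particular tangent to $\Sigma$, so we may freely use $\overbar\nabla$ in place of $\widehat\nabla$ up to a normal correction and in place of the connection of $\Sigma$ up to a tangential projection. First I would write $\eta = \phi(N - \theta\partial_t)$ from \eqref{eq-horizontalnormal} and differentiate: $\overbar\nabla_X \eta = (X\phi)(N-\theta\partial_t) + \phi\bigl(\overbar\nabla_X N - (X\theta)\partial_t - \theta\,\overbar\nabla_X\partial_t\bigr)$. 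Because $\partial_t$ is parallel in $M\times\R$, the last term vanishes, leaving $\overbar\nabla_X\eta = (X\phi)\phi^{-1}\eta - \phi\bigl(AX + (X\theta)\partial_t\bigr)$.

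The key step is then to take the inner product with an arbitrary $Y \in T\Sigma_t$ and discard the terms that do not contribute. Since $\eta \perp T\Sigma_t$, the term $(X\phi)\phi^{-1}\langle\eta,Y\rangle$ drops out; since $Y$ is horizontal, $\langle\partial_t,Y\rangle = 0$, so the $(X\theta)\partial_t$ term drops out as well. What remains is $\langle\overbar\nabla_X\eta,Y\rangle = -\phi\langle AX,Y\rangle$. On the other hand, $\langle A_\eta X,Y\rangle = -\langle\widehat\nabla_X\eta,Y\rangle = -\langle\overbar\nabla_X\eta,Y\rangle$, the difference between $\widehat\nabla$ and $\overbar\nabla$ being normal to $M_t$ and hence orthogonal to $Y$. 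Combining these gives $\langle A_\eta X,Y\rangle = \phi\langle AX,Y\rangle$ for all $X,Y \in T\Sigma_t$, which is the first assertion.

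For the consequence, I would diagonalize. Note that $T = \nabla\xi/\|\nabla\xi\|$ is, by \eqref{eq-defgradxi}, the unit tangent to $\Sigma$ in the direction of the height gradient; one checks from $\eqref{eq-horizontalnormal}$ and $\nabla\xi = \partial_t - \theta N$ that $T$ is orthogonal to $T\Sigma_t$ inside $T\Sigma$, so $\{T\} \cup (\text{a basis of }T\Sigma_t)$ spans $T\Sigma$. Taking the trace of the first identity over an orthonormal basis $e_2,\dots,e_n$ of $T\Sigma_t$ gives $H_{\Sigma_t} = \operatorname{tr}A_\eta = \phi\sum_{i\ge 2}\langle Ae_i,e_i\rangle = \phi\bigl(\operatorname{tr}A - \langle AT,T\rangle\bigr) = \phi(H - \langle AT,T\rangle)$, which is \eqref{eq-meancurvatureslices}.

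The main obstacle is purely bookkeeping: one must be careful that the various connections (of $M\times\R$, of $\Sigma$, and of $M_t$) differ only by terms lying in directions that are annihilated when pairing against $Y \in T\Sigma_t$, so that replacing one connection by another is legitimate at each stage. Once this is set up cleanly, no genuine computation is needed beyond using that $\partial_t$ is parallel and that $Y$ and $\eta$ are orthogonal in the relevant senses.
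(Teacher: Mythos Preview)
Your proof is correct and follows essentially the same approach as the paper: differentiate $\eta=\phi(N-\theta\partial_t)$ using the ambient connection, use that $\partial_t$ is parallel, and then pair against $Y\in T\Sigma_t$ to kill the $\eta$ and $\partial_t$ terms; the mean curvature identity then follows by tracing over an orthonormal frame $\{X_1,\dots,X_{n-1},T\}$ adapted to $\Sigma_t\subset\Sigma$. The only cosmetic difference is that the paper identifies $\widehat\nabla$ with $\overbar\nabla$ outright by noting that $M_t$ is totally geodesic in $M\times\R$, whereas you invoke the Gauss formula to say the difference is normal to $M_t$ and hence orthogonal to $Y$; both justifications are equivalent here.
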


\begin{proof}
We have that $M_t=M\times\{t\}$ is  totally geodesic in $M\times\R.$ Hence, its Riemannian connection
coincides with the restriction of the Riemannian connection $\overbar\nabla$ of $M\times\R$ to $TM_t\times TM_t$\,.
Therefore, for all $X\in T\Sigma_t$\,, we have
\[
A_\eta X=-\overbar\nabla_X\eta=-\overbar\nabla_X\phi_{\ssr}(N_{\ssr}-\theta_{\ssr}\partial_t)=-X(\phi_{\ssr})(N_{\ssr}-\theta_{\ssr}\partial_t)
+\phi_{\ssr}(A_{\ssr}X+X(\theta_{\ssr})\partial_t).
\]

Thus, for all $Y\in T\Sigma_t=TM_t\cap T\Sigma$\,,
\[
\langle A_\eta X,Y\rangle=\phi\langle AX,Y\rangle.
\]

Now,  in a suitable neighborhood $U\subset\Sigma$ of an arbitrary  point on $\Sigma_t$\,, consider an orthonormal frame
$\{X_1\,, \dots , X_{n-1},T\}$ such that $X_1\,, \dots X_{n-1}$ are all tangent to $\Sigma_t$\,.
Then, on $U\cap\Sigma_t$\,, we have
\[
H_{\Sigma_{t}}=\sum_{i=1}^{n-1}\langle A_\eta X_i,X_i\rangle=\phi\sum_{i=1}^{n-1}\langle A X_i,X_i\rangle=
\phi(H-\langle AT,T\rangle),
\]
which concludes the proof.
\end{proof}

\subsection{Mean Isocurved Hypersurfaces} %  $M\times\R$}
Let us consider in $M\times\R$ the \emph{Lorentzian} product metric,
which is defined as
\[\langle \,,\,\rangle_{\ssl}:= \langle \,,\,\rangle_{\scriptscriptstyle M}-dt^2.\]
This metric relates to the Riemannian metric $\langle \,,\, \rangle$ of $M\times\R$ through the identity
\begin{equation} \label{eq-lorentzianeuclidean}
\langle X,Y\rangle_{\ssl} = \langle X,Y\rangle-2\langle X,\partial _t\rangle_{\ssr}\langle Y,\partial_t\rangle_{\ssr},
\end{equation}
which, as one can  verify, is valid for all $X,Y\in T(M\times\R).$

Denote by $\Sigma_{\ssl}:=(\Sigma,\langle \,,\, \rangle_{\ssl})$
a hypersurface $\Sigma$ of $M\times\R$
with the  induced Lorentzian metric of $M\times\R.$
We say that $\Sigma$ is \emph{spacelike} if $\Sigma_{\ssl}$ is a Riemannian manifold,
that is, the Lorentzian metric on $\Sigma$ is positive definite. It is easily checked that
$\Sigma$ is spacelike if and only if
$\langle Z,Z\rangle_{\ssl}<0$ for all nonzero local field $Z\in T\Sigma_{\ssl}^\perp.$
Also, any spacelike hypersurface of $M\times\R$ is necessarily orientable.

Assuming $\Sigma\subset M\times\R$ spacelike, choose a unit normal
$N_{\ssl}$ to $\Sigma_{\ssl}$, that is,
\[
\langle N_{\ssl}, N_{\ssl}\rangle_{\ssl}=-1 \quad\text{and}\quad \langle X, N_{\ssl}\rangle_{\ssl}=0 \,\, \forall X\in T\Sigma.
\]
It is a well known fact that the connections of $M\times\R$
with respect to the Riemannian and Lorentzian metrics coincide. So,
keeping the notation of Section \ref{sec-preliminaries}, we define the Lorentzian shape operator
of $\Sigma_{\ssl}$ with respect to $N_{\ssl}$ as
\begin{equation}\label{eq-shapeoperators}
A_{\ssl}X:=-\overbar\nabla_XN_{\ssl}. % \quad\text{and}\quad  \sigma_{\ssl}(X,Y)=\langle A_{\ssl}X,Y\rangle_{\ssl}\,, \,\, X,Y\in T\Sigma.
\end{equation}

Finally, the (non normalized) Lorentzian  mean curvature $H_{\ssl}$ of
\,$\Sigma_{\ssl}$ is defined as
\[H_{\ssl}:=-{\rm trace}\,A_{\ssl}.\]

\begin{definition}
A spacelike hypersurface $\Sigma\subset M\times\R$ is said to be  \emph{mean isocurved} if its Riemannian
and Lorentzian mean curvature functions, $H$ and $H_{\ssl}$, coincide.  When $H=H_{\ssl}=0,$ we say that
$\Sigma$ is \emph{zero mean isocurved.}
\end{definition}

Let us consider the following map
\[
\Phi(X)=X-2\langle X,\partial _t\rangle_{\ssr}\partial_t,  \,\, X\in T(M\times\R),
\]
which is easily seen to be an involution, that is, $\Phi\circ\Phi$
is the identity map of $T(M\times\R).$ Moreover,
for all $X, Y\in T( M\times\R),$  the following identities hold:
\begin{equation}\label{eq-propertiesPsi}
\langle \Phi(X),Y\rangle=\langle X,Y\rangle_{\scriptscriptstyle L}\quad\text{and}\quad
    \langle \Phi(X),Y\rangle_{\scriptscriptstyle L}=\langle X,Y\rangle.
\end{equation}

Given an oriented  hypersurface $\Sigma\subset M\times\R$  with unit
normal $N,$ it follows from the second relation in  \eqref{eq-propertiesPsi} that
$\Phi(N)$ is a Lorentzian normal field on $\Sigma.$ Indeed,
\[
\langle \Phi(N),X\rangle_{\ssl}=\langle N,X\rangle=0 \,\,\, \forall X\in T\Sigma.
\]
Moreover, considering also the equality \eqref{eq-lorentzianeuclidean}, we have
\[
\langle \Phi(N_{\ssr}),\Phi(N_{\ssr})\rangle_{\ssl}=
\langle N_{\ssr},\Phi(N_{\ssr})\rangle_{\ssr}=
\langle N_{\ssr},N_{\ssr}\rangle_{\ssl}=1-2\Theta_{\ssr}^2,
\]
from which we conclude that $\Sigma$ \emph{is spacelike if and only if}
$2\theta^2>1.$ If so, set
\[
N_{\ssl}:=\mu\Phi(N), \,\,\,\,\, \mu:=\frac{-1}{\sqrt{2\Theta_{\ssr}^2-1}}<0,
\]
and write $A_{\ssl}$ for the shape operator of $\Sigma_{\ssl}$ with respect to $N_{\ssl}.$

\begin{lemma} \label{lem-shapeoperators}
Let $\Sigma$ be a spacelike hypersurface of $M\times\R$ with no horizontal points.
With the above notation,  the following identities hold:
\begin{itemize}[parsep=1ex]
  \item[\rm i)] $\langle A_{\ssl}X,Y\rangle_{\ssl}=\mu\langle A_{\ssr}X,Y\rangle_{\ssr} \,\,\, \forall X,Y\in T\Sigma.$
  \item[\rm ii)] $H_{\ssl}+\mu H_{\ssr}=\mu(1-\mu^2)\langle A_{\ssr}T,T\rangle_{\ssr}, \,\,\, T=\nabla\xi/\|\nabla\xi\|.$
\end{itemize}
\end{lemma}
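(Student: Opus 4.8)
The plan is to compute $A_{\ssl}$ directly from its definition \eqref{eq-shapeoperators} by differentiating $N_{\ssl}=\mu\Phi(N)$. First I would expand
\[
A_{\ssl}X=-\overbar\nabla_X(\mu\Phi(N))=-X(\mu)\Phi(N)-\mu\,\overbar\nabla_X\Phi(N),
\]
and then use that $\partial_t$ is parallel in $M\times\R$ to get $\overbar\nabla_X\Phi(N)=\overbar\nabla_X N-2X(\langle N,\partial_t\rangle)\partial_t=\overbar\nabla_X N-2X(\theta)\partial_t$, so that $\overbar\nabla_X\Phi(N)=-AX-2X(\theta)\partial_t$. Thus
\[
A_{\ssl}X=-X(\mu)\Phi(N)+\mu AX+2\mu X(\theta)\partial_t.
\]
To prove i), I would pair this with $Y\in T\Sigma$ using the Lorentzian metric. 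The term $\langle \Phi(N),Y\rangle_{\ssl}$ vanishes because $\Phi(N)$ is $\langle\,,\,\rangle_{\ssl}$-normal to $\Sigma$ (shown in the excerpt), and $\langle\partial_t,Y\rangle_{\ssl}=\langle\partial_t,Y\rangle$ since $\partial_t$ is unit-timelike but $Y\in T\Sigma$... more carefully, $\langle\partial_t,Y\rangle_{\ssl}=-\langle\partial_t,Y\rangle$ by \eqref{eq-lorentzianeuclidean} with $X=\partial_t$. So the last term contributes $-2\mu X(\theta)\langle\partial_t,Y\rangle$. For this to give the clean identity in i), this term must be re-expressed: use $\langle AX,Y\rangle$ on the middle term via $\langle\Phi(AX),Y\rangle_{\ssl}=\langle AX,Y\rangle$ from \eqref{eq-propertiesPsi}, and absorb the $\partial_t$ term into a recomputation of $\mu\langle A X,Y\rangle_{\ssr}$. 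The honest way: write $\langle A_{\ssl}X,Y\rangle_{\ssl}=\mu\langle AX,Y\rangle_{\ssl}-2\mu X(\theta)\langle\partial_t,Y\rangle_{\ssl}$ and then convert both pieces back to the Riemannian metric. Using \eqref{eq-lorentzianeuclidean}, $\langle AX,Y\rangle_{\ssl}=\langle AX,Y\rangle-2\langle AX,\partial_t\rangle\langle Y,\partial_t\rangle$; since $\nabla\theta=-A\nabla\xi=-A(\partial_t-\theta N)$ and $AN=0$, we have $A\partial_t=-\nabla\theta$, hence $\langle AX,\partial_t\rangle=\langle X,A\partial_t\rangle=-X(\theta)$. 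Also $\langle\partial_t,Y\rangle_{\ssl}=-\langle\partial_t,Y\rangle$. Substituting, the two correction terms cancel exactly, leaving $\langle A_{\ssl}X,Y\rangle_{\ssl}=\mu\langle AX,Y\rangle_{\ssr}$.

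\textbf{Part ii).} For the mean curvature identity, I would take an orthonormal frame adapted to $\Sigma$ and trace part i). The subtlety is that $H_{\ssl}=-\operatorname{trace}_{\ssl}A_{\ssl}$ is the Lorentzian trace, whereas part i) compares the bilinear forms. So I would pick a frame $\{X_1,\dots,X_{n-1},T\}$ of $T\Sigma$ that is $\langle\,,\,\rangle_{\ssr}$-orthonormal with $T=\nabla\xi/\|\nabla\xi\|$ and $X_1,\dots,X_{n-1}$ horizontal (i.e. in $TM$, so $\langle X_i,\partial_t\rangle=0$). By \eqref{eq-lorentzianeuclidean} the $X_i$ stay $\langle\,,\,\rangle_{\ssl}$-orthonormal, while $\langle T,T\rangle_{\ssl}=1-2\langle T,\partial_t\rangle^2$. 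Since $\nabla\xi=\partial_t-\theta N$ one computes $\|\nabla\xi\|^2=1-\theta^2$ and $\langle\nabla\xi,\partial_t\rangle=1-\theta^2$, so $\langle T,\partial_t\rangle^2=1-\theta^2$, giving $\langle T,T\rangle_{\ssl}=2\theta^2-1=\mu^{-2}$. Hence the dual frame in the Lorentzian metric has $T$ rescaled by $\mu^2$, and $H_{\ssl}=-\sum_i\langle A_{\ssl}X_i,X_i\rangle_{\ssl}-\mu^2\langle A_{\ssl}T,T\rangle_{\ssl}$. Applying part i) to each term: $\langle A_{\ssl}X_i,X_i\rangle_{\ssl}=\mu\langle AX_i,X_i\rangle_{\ssr}$ and $\langle A_{\ssl}T,T\rangle_{\ssl}=\mu\langle AT,T\rangle_{\ssr}$, so
\[
H_{\ssl}=-\mu\sum_i\langle AX_i,X_i\rangle_{\ssr}-\mu^3\langle AT,T\rangle_{\ssr}
=-\mu\bigl(H_{\ssr}-\langle AT,T\rangle_{\ssr}\bigr)-\mu^3\langle AT,T\rangle_{\ssr},
\]
which rearranges to $H_{\ssl}+\mu H_{\ssr}=\mu\langle AT,T\rangle_{\ssr}-\mu^3\langle AT,T\rangle_{\ssr}=\mu(1-\mu^2)\langle A_{\ssr}T,T\rangle_{\ssr}$, as claimed.

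\textbf{Main obstacle.} The routine-but-delicate point is bookkeeping the difference between the Riemannian and Lorentzian inner products when passing from the pointwise identity in i) to a trace in ii): one must use a frame that is simultaneously manageable for both metrics, and correctly account for the fact that $T$ is not $\langle\,,\,\rangle_{\ssl}$-unit (its Lorentzian square-norm is $\mu^{-2}$), which is exactly where the factor $\mu^2$ — and ultimately the $(1-\mu^2)$ — enters. The identity $A\partial_t=-\nabla\theta$ (equivalently $\langle AX,\partial_t\rangle=-X(\theta)$), which follows from \eqref{eq-defgradxi} together with $AN=0$, is the other small ingredient making the correction terms in i) cancel. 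Everything else is a direct computation using that $\partial_t$ is parallel and that the two Levi-Civita connections agree.
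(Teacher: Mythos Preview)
Your proof is correct, and for part~(ii) it is essentially the paper's argument: both pick a Riemannian-orthonormal frame $\{X_1,\dots,X_{n-1},T\}$ with the $X_i$ horizontal, observe that $\langle T,T\rangle_{\ssl}=2\theta^2-1=\mu^{-2}$, and then trace, so the extra factor $\mu^2$ on the $T$-slot produces the $(1-\mu^2)$. For part~(i), however, you take a genuinely different route. The paper bypasses all the cancellation you perform by using the second fundamental form rather than the shape operator: it writes
\[
\langle A_{\ssl}X,Y\rangle_{\ssl}=\langle\overbar\nabla_XY,N_{\ssl}\rangle_{\ssl}
=\mu\langle\overbar\nabla_XY,\Phi(N)\rangle_{\ssl}
=\mu\langle\overbar\nabla_XY,N\rangle
=\mu\langle AX,Y\rangle,
\]
invoking the identity $\langle\Phi(Z),W\rangle_{\ssl}=\langle Z,W\rangle$ from \eqref{eq-propertiesPsi}. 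This one-line proof avoids differentiating $\mu$ and $\Phi(N)$ altogether and never needs the auxiliary fact $\langle AX,\partial_t\rangle=-X(\theta)$. Your direct expansion of $-\overbar\nabla_X(\mu\Phi(N))$ is perfectly valid but more laborious; two small clean-ups: (a) in your displayed line ``$\langle A_{\ssl}X,Y\rangle_{\ssl}=\mu\langle AX,Y\rangle_{\ssl}-2\mu X(\theta)\langle\partial_t,Y\rangle_{\ssl}$'' the sign of the last term should be $+$, not $-$ (this is what actually makes the correction terms cancel against the $-2\langle AX,\partial_t\rangle\langle Y,\partial_t\rangle$ piece); and (b) writing ``$A\partial_t=-\nabla\theta$'' and ``$AN=0$'' is notationally loose since neither $\partial_t$ nor $N$ lies in $T\Sigma$; the honest computation is $\langle AX,\partial_t\rangle=\langle AX,\nabla\xi\rangle=\langle X,A\nabla\xi\rangle=-\langle X,\nabla\theta\rangle=-X(\theta)$, which is what you need.
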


\begin{proof}
Given \,$X,Y \in TM,$\, one has
\[
\langle A_{\ssl}X,Y\rangle_{\ssl}=
\langle \overbar\nabla_XY, N_{\ssl}\rangle_{\ssl}=
\langle \overbar\nabla_XY, \mu\Phi(N_{\ssr})\rangle_{\ssl}=
\mu\langle \overbar\nabla_XY, N_{\ssr}\rangle_{\ssr}=
\mu\langle A_{\ssr}X,Y\rangle_{\ssr},
\]
which proves (i).

Now, let us consider a point $x\in\Sigma$ and a basis
$\mathfrak B=\{X_1\,, \dots , X_n\}$ of $T_x\Sigma$  which is orthonormal with
respect to the  Riemannian metric $\langle \,,\, \rangle$.
Since $x$ is non horizontal, we can assume that $X_1\,, \dots ,X_{n-1}$ are horizontal, i.e.,
tangent to $M,$ and $X_n=T.$ Hence, by \eqref{eq-lorentzianeuclidean}, $\{X_1\,, \dots ,X_{n-1}\}$ is orthonormal
with respect to the Lorentzian metric $\langle\,,\,\rangle_{\ssl}$,  and
$\langle X_i, T\rangle_{\ssl}=0 \,\, \forall i=1,\dots ,n-1.$

Denote  by $[a_{ij}]$ and $[\ell_{ij}]$ the matrices of the shape operators
$A$\, and $A_{\ssl}$, respectively,  with respect to the basis $\mathfrak B.$
From (i), we have
\begin{equation}\label{eq-matrices1}
\ell_{ij}=\langle A_{\ssl}X_i, X_j \rangle_{\ssl}=
\mu\langle A_{\ssr}X_i,X_j\rangle_{\ssr}=\mu a_{ij} \,\,\, \forall i,j=1,\dots ,n-1.
\end{equation}
Also, for any index \,$j=1,\dots ,n,$ one has
\begin{equation}\label{eq-matrices0}
\mu a_{nj}=\mu\langle A_{\ssr}X_j,T\rangle_{\ssr}=
\langle A_{\ssl}X_j, T \rangle_{\ssl}=\sum_{i=1}^{n}\ell_{ij}\langle X_i,T\rangle_{\ssl}=\ell_{nj}\langle T, T \rangle_{\ssl}\,.
\end{equation}
However, by  \eqref{eq-defgradxi} and \eqref{eq-lorentzianeuclidean},
\[
\langle T, T \rangle_{\ssl}=1-2\langle T, \partial_t\rangle_{\ssr}^2=2\Theta_{\ssr}^2-1=\frac{1}{\mu^2}\,\cdot
\]
This, together with \eqref{eq-matrices0}, yields
\begin{equation}\label{eq-matrices2}
\ell_{nj}=\mu^3a_{nj}\quad\forall j=1,\dots ,n.
\end{equation}

Putting \eqref{eq-matrices1} and \eqref{eq-matrices2} together, we have
\[
[\ell_{ij}]=\mu
\left[
\begin{array}{ccc}
a_{11} & \cdots & a_{1n} \\
\vdots    &        & \vdots    \\
a_{i1} & \cdots & a_{in} \\
\vdots    &        & \vdots    \\
\mu^2a_{n1} & \cdots &\mu^2a_{nn}
\end{array}
\right],
\]
which implies that
\begin{equation}\label{eq-tracedet}
    {\rm trace}[\ell_{ij}]=\mu({\rm trace}[a_{ij}]+(\mu^2-1)a_{nn}).
\end{equation}

Since we have $a_{nn}=\langle AT,T\rangle,$
$H_{\ssl}=-{\rm trace}[\ell_{ij}],$ and $H={\rm trace}[a_{ij}],$ the identity
\eqref{eq-tracedet} clearly implies (iii).
\end{proof}

The following result extends \cite[Theorem 4]{albujer1}, set in Lorentzian space $\mathbb{L}^{3},$
to hypersurfaces in $M\times\R.$

\begin{corollary} \label{prop-noelipticpoints}
Let $\Sigma$ be a mean isocurved  hypersurface of $M\times\R.$
Then, its second fundamental form $\sigma$ is nowhere definite. Furthermore,
$\sigma$  is semi-definite
at $x\in\Sigma$ if and only if \,$\Sigma$  is totally geodesic at $x.$
\end{corollary}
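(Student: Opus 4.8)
The plan is to substitute the defining relation $H=H_{\ssl}$ into part (ii) of Lemma~\ref{lem-shapeoperators} and read off from it a sign constraint on the shape operator $A$ of $\Sigma$ (definiteness of $\sigma$ at a point being the same as definiteness of $A$ there). By definition a mean isocurved hypersurface is spacelike, hence $2\theta^2>1$ everywhere, so $\mu=-1/\sqrt{2\theta^2-1}$ is well defined, $\mu\le-1$, and $\mu=-1$ exactly at horizontal points. I would dispose of the non-horizontal points first.

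Let $x$ be non-horizontal, so $1+\mu<0$. Putting $H_{\ssl}=H$ in Lemma~\ref{lem-shapeoperators}(ii), with $T=\nabla\xi/\|\nabla\xi\|$ as in the lemma, gives $(1+\mu)H=\mu(1-\mu^2)\langle AT,T\rangle=\mu(1-\mu)(1+\mu)\langle AT,T\rangle$; cancelling $1+\mu$ leaves the key identity
\[
H=\mu(1-\mu)\,\langle AT,T\rangle\qquad\text{at }x,
\]
whose coefficient $\mu(1-\mu)$ is strictly negative (the negative number $\mu$ times the positive number $1-\mu$). Since $A$ is self-adjoint for the Riemannian metric, $H={\rm trace}\,A$ and the number $\langle AT,T\rangle$ have the \emph{same} sign whenever $A$ is definite at $x$, while the identity forces them to have \emph{opposite} signs; this contradiction shows that $\sigma$ is not definite at $x$. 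If instead $A$ is, say, positive semi-definite at $x$, then $\langle AT,T\rangle\ge0$, hence $H\le0$ by the identity, whereas ${\rm trace}\,A=H\ge0$; so $H=0$, and a positive semi-definite operator with zero trace vanishes, i.e. $\Sigma$ is totally geodesic at $x$. The negative semi-definite case is symmetric, and the converse is immediate.

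At a horizontal point $x$, where $\nabla\xi$ --- and hence $T$ --- degenerates and Lemma~\ref{lem-shapeoperators}(ii) is not directly available, I would argue by continuity. If $x$ lies in the interior of the set of horizontal points then $N\equiv\pm\partial_t$ near $x$, so $\Sigma$ coincides locally with a horizontal slice $M_{t_0}$, which is totally geodesic in $M\times\R$ (cf. the proof of Lemma~\ref{lem-horizontalsection}), whence $\sigma_x=0$. Otherwise take non-horizontal points $x_k\to x$; after passing to a subsequence the unit vectors $T_k=\nabla\xi/\|\nabla\xi\|$ converge to a unit $T_\infty\in T_x\Sigma$, and since $\mu_k\to-1$ the key identity passes to the limit as ${\rm trace}\,A_x=-2\langle A_xT_\infty,T_\infty\rangle$. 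If $A_x\ge0$ this forces ${\rm trace}\,A_x=0$, hence $A_x=0$; symmetrically if $A_x\le0$. So once more $\sigma_x$ is not definite and is semi-definite precisely when $\Sigma$ is totally geodesic at $x$.

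Finally, putting the cases together: a definite form is in particular semi-definite, so it could occur only where $\Sigma$ is totally geodesic, i.e. where $\sigma=0$, which is not definite --- so $\sigma$ is nowhere definite --- and the stated equivalence was obtained at every point. The one genuine obstacle I foresee is the degeneration at horizontal points; all the rest is the elementary collapse of Lemma~\ref{lem-shapeoperators}(ii) under $H=H_{\ssl}$ together with the inequality $\mu(1-\mu)<0$.
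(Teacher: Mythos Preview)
Your proof is correct and follows essentially the same route as the paper: both derive the relation $H=\mu(1-\mu)\langle AT,T\rangle$ from Lemma~\ref{lem-shapeoperators} under $H=H_{\ssl}$ and exploit that the coefficient is negative. The paper rewrites this as $\sum_{i=1}^{n-1}a_{ii}+(1+\mu(\mu-1))a_{nn}=0$ and notes that all coefficients are positive, while you compare the signs of $H$ and $\langle AT,T\rangle$ directly; these are equivalent rearrangements of the same identity. Your treatment of horizontal points is in fact more thorough than the paper's: the paper only claims the conclusion on the closure of $\Sigma\setminus C$ without discussing interior horizontal points or the limiting frame, whereas you handle both cases explicitly.
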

\begin{proof}
Let us denote by $C\subset\Sigma$ the set of critical points of the height function
$\xi$ of $\Sigma.$
Keeping the notation of the proof of the preceding lemma, and considering
the  equality  \eqref{eq-tracedet}, we have that
$H=\mu(1-\mu)a_{nn}$ on $\Sigma-C,$
for $H_{\ssl}=H.$
Thus,
\begin{equation}\label{eq-trace}
\sum_{i=1}^{n-1}a_{ii}+(1+\mu(\mu-1))a_{nn}=0.
\end{equation}
However,  $1+\mu(\mu-1)>0$ and $a_{ii}=\langle AX_i,X_i\rangle_{\ssr}=\sigma_{\ssr}(X_i,X_i), \,i=1,\dots ,n.$
Hence, the equality \eqref{eq-trace} implies that, at a point $x$ in the closure of $\Sigma-C$ in $\Sigma,$
$\sigma_{\ssr}$ is  neither definite nor semi-definite,
unless, in the latter case, it vanishes.% and so  $\Sigma$  is totally geodesic at $x.$
\end{proof}

\section{Vertical Helicoids in $M\times\R.$} \label{sec-helicoids}

Inspired by some fundamental properties of the standard helicoids of $\R^3$
(see Example \ref{exam-2helicoids} below), we
introduce in this section the concept of  vertical helicoid in $M\times\R$. We shall  establish
the uniqueness properties of these hypersurfaces and present a variety of examples,
as we mentioned in the introduction. In addition, we will characterize the vertical helicoids
which are graphs of functions on $M,$ and give a local characterization of vertical helicoids $\Sigma$ whose
horizontal sections $\Sigma_t$ are totally geodesic in $M_t\,.$

\begin{definition} \label{def-helicoid}
Let $\Sigma$ be a hypersurface of $M\times\R$ with no horizontal points
and nonconstant angle function.   We say that $\Sigma$ is a \emph{vertical helicoid}
if it satisfies the following conditions:

\begin{itemize}[parsep=1ex]
  \item The horizontal sections $\Sigma_t\subset\Sigma$ are minimal hypersurfaces of $M\times\{t\}.$
  \item $\nabla\xi$ is an asymptotic direction of \,$\Sigma,$ that is, $\langle A\nabla\xi,\nabla\xi\rangle=0$ on \,$\Sigma.$
\end{itemize}
\end{definition}

\begin{remark}
Considering the standard helicoids in $\R^3=\R^2\times\R,$ one could expect that a
right extension of this concept to the context of products $M\times\R$ should ask for the horizontal sections
to be totally geodesic, since the horizontal sections of the helicoids in $\R^3$ are straight lines.
However, as our results and examples shall show, the appropriate condition to be imposed to  the horizontal sections is, in fact,
minimality, as in the above definition.
\end{remark}

\begin{remark} \label{rem-helices}
The identity $\nabla\theta=-A\nabla\xi$ implies that $\nabla\xi$ is an asymptotic direction
of $\Sigma$ if and only if the equality
$
\langle \g_{\ssr}\theta_{\ssr},\g_{\ssr}\xi\rangle_{\ssr}=0
$
holds on $\Sigma.$ In this case, we have that $\theta$ is constant along any trajectory $\gamma(s)$
of $\nabla\xi.$ However, $\langle\nabla\xi,\partial_t\rangle=1-\theta^2,$  which gives that the
tangent directions $\gamma'(s)$  make a constant angle with the vertical direction $\partial_t.$
Therefore, considering the concept of helix in $\R^3$ as a curve which makes a constant angle with a given direction,
we can extend it to curves in $M\times\R$ in an obvious way and conclude
that the trajectories of $\nabla\xi$ on a vertical helicoid in $M\times\R$ are \emph{vertical helices}.
\end{remark}

In what follows, let $Q_c^n$ denote the simply connected $n$-space form of constant sectional curvature $c\in\{0,1,-1\},$ that is,
the Euclidean space $\R^n$ ($c=0$), the $n$-sphere $\s^n$ ($c=1$), or the hyperbolic space $\h^n$ ($c=-1$).

\begin{example}[\emph{Helicoids in} $Q_c^2\times\R$] \label{exam-2helicoids}
Consider the following parametrization of the standard vertical helicoid $\Sigma$ of $\R^3=\R^2\times\R$
with \emph{pitch} $a>0,$
\[
\Psi(x,y)=(x\cos y,x\sin y,ay), \,\, (x,y)\in\R^2.
\]
As its Riemannian unit normal field, we can choose
\[
N=\frac{1}{\sqrt{x^2+a^2}}(a\sin y,-a\cos y,x),
\]
which gives $\theta=x/(x^2+a^2)^{1/2}.$

Since $\Psi$ is an orthogonal parametrization and
$\theta_{\ssr}$ depends only on $x,$ we have that $\g_{\ssr}\theta_{\ssr}$
is parallel to $\Psi_x=(\cos y,\sin y,0).$ In particular,
\[
\langle\g_{\ssr}\theta_{\ssr},\g_{\ssr}\xi\rangle_{\ssr}=\langle\g_{\ssr}\theta_{\ssr},\partial_t\rangle_{\ssr}=0.
\]
Hence, $\nabla\xi$ is an asymptotic direction of $\Sigma.$

We also have that all  horizontal sections of $\Sigma$ are  straight lines.
Therefore, $\Sigma$ is a  vertical helicoid as in Definition \ref{def-helicoid}.
Moreover, from the equality
\[2\theta_{\ssr}^2-1=\frac{x^2-a^2}{x^2+a^2}\,,\]
we conclude that the open subset $\Sigma'=\{\Psi(x,y)\in\Sigma\,;\, |x|>a\}$ is spacelike and, as is
well known, zero mean isocurved (see, e.g., \cite{kobayashi}).

Considering the standard inclusions  $\s^2\hookrightarrow\R^3$ and
$\h^2\hookrightarrow\mathbb{L}^3,$ we can apply an analogous reasoning to the
parametrizations (see, e.g., \cite[Section 4]{daniel}):
\begin{align}
  \Psi_{\rm sph}(x,y) &=(\cos x\cos y,\cos x\sin y, \sin x, ay)\in\s^2\times\R; \nonumber\\
  \Psi_{\rm hyp}(x,y) &=(\sinh x\cos y, \sinh x\sin y, \cosh x, ay)\in \h^2\times\R; \nonumber
\end{align}
and conclude that their images are  vertical helicoids in $\s^2\times\R$ and
$\h^2\times\R,$ respectively. They are both minimal surfaces
containing open  spacelike zero mean isocurved subsets, as verified in \cite{kimetal}.
\end{example}

We prove now, as suggested by the above examples, that
vertical helicoids in product spaces $M\times\R$ are minimal hypersurfaces. As such,
except for  some constant angle hypersurfaces,
they are the only ones foliated by horizontal minimal hypersurfaces.  Moreover,
spacelike pieces of vertical helicoids (if any) are  zero mean isocurved
hypersurfaces in $M\times\R$, and they are unique with respect to this property as well.

\begin{theorem} \label{th-HR=HL}
Let $\Sigma$ be a hypersurface of $M\times\R$ with no horizontal points
and nonconstant angle function. Then, the following statements are equivalent:
\begin{itemize}[parsep=1ex]
  \item[{\rm i)}] $\Sigma$ is a  vertical helicoid.
   \item[{\rm ii)}]$\Sigma_{\ssr}$ and all the horizontal sections $\Sigma_t$ are minimal hypersurfaces.
 \end{itemize}
If, in addition, $\Sigma$ is spacelike, then both {\rm (i)} and {\rm (ii)} are equivalent to:
\begin{itemize}
 \item[{\rm iii)}] $\Sigma$ is zero mean isocurved. % (i.e., $H=H_L=0.$)
\end{itemize}
\end{theorem}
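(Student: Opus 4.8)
The plan is to establish the chain of equivalences by exploiting the two formulae already available: the horizontal‑section mean curvature identity \eqref{eq-meancurvatureslices},
\[
H_{\Sigma_{t}}=\phi_{\ssr}(H_{\ssr}-\langle AT,T\rangle),
\]
and the Lorentzian/Riemannian mean curvature relation from Lemma \ref{lem-shapeoperators}(ii),
\[
H_{\ssl}+\mu H_{\ssr}=\mu(1-\mu^2)\langle A_{\ssr}T,T\rangle_{\ssr}.
\]
The common object linking all three statements is the quantity $\langle AT,T\rangle$, where $T=\nabla\xi/\|\nabla\xi\|$; since $T$ is just a positive rescaling of $\nabla\xi$, the asymptotic‑direction condition $\langle A\nabla\xi,\nabla\xi\rangle=0$ is exactly $\langle AT,T\rangle=0$.

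First I would prove (i) $\Leftrightarrow$ (ii). Assume (i): then $\langle AT,T\rangle=0$ by the asymptotic condition, so \eqref{eq-meancurvatureslices} gives $H_{\Sigma_t}=\phi H$. By hypothesis the horizontal sections are minimal, $H_{\Sigma_t}=0$, and since $\phi=-(1-\theta^2)^{-1/2}\neq 0$ (no horizontal points), we get $H=0$, i.e.\ $\Sigma$ is minimal; this is (ii). Conversely, assume (ii): $H=0$ and $H_{\Sigma_t}=0$ for all $t$. Plugging into \eqref{eq-meancurvatureslices} yields $0=\phi(0-\langle AT,T\rangle)$, hence $\langle AT,T\rangle=0$, which is precisely the asymptotic‑direction condition, and combined with $H_{\Sigma_t}=0$ this is the definition of a vertical helicoid. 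One subtlety to address: Definition \ref{def-helicoid} presupposes "no horizontal points" and "nonconstant angle function," but these are already in the hypotheses of the theorem, so there is nothing extra to check. I would also note that \eqref{eq-meancurvatureslices} only makes sense where $\Sigma$ meets $M_t$ transversally, which again is guaranteed by the absence of horizontal points.

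Next, assuming $\Sigma$ is spacelike, I would close the loop with (ii) $\Leftrightarrow$ (iii). Spacelike here forces $2\theta^2>1$ (as derived in the excerpt just before Lemma \ref{lem-shapeoperators}), so $\mu=-1/\sqrt{2\theta^2-1}$ is well defined and strictly negative, and in particular $\mu\neq 0$. Zero mean isocurved means $H_{\ssl}=H_{\ssr}=0$. If (iii) holds, Lemma \ref{lem-shapeoperators}(ii) becomes $0=\mu(1-\mu^2)\langle AT,T\rangle$; now $1-\mu^2$ could vanish only where $\theta^2=1$, i.e.\ at a horizontal point, which is excluded, so $1-\mu^2\neq 0$ and therefore $\langle AT,T\rangle=0$. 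Feeding this back into \eqref{eq-meancurvatureslices} with $H=H_{\ssr}=0$ gives $H_{\Sigma_t}=0$, so (ii) holds. Conversely, if (ii) holds then $H_{\ssr}=0$ and, as shown above, $\langle AT,T\rangle=0$; Lemma \ref{lem-shapeoperators}(ii) then reads $H_{\ssl}=-\mu H_{\ssr}+\mu(1-\mu^2)\cdot 0=0$, so $H_{\ssl}=H_{\ssr}=0$, which is (iii).

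The argument is essentially an exercise in chasing two algebraic identities, so I do not anticipate a genuine obstacle; the only points requiring care are the non‑vanishing of the coefficients $\phi$, $\mu$ and $1-\mu^2$, all of which follow from the standing assumption that $\Sigma$ has no horizontal points (equivalently $\theta^2\neq 1$ everywhere, equivalently — in the spacelike case — $2\theta^2>1$). I would present the proof as: (1) observe $\langle AT,T\rangle=0 \Leftrightarrow \nabla\xi$ asymptotic; (2) use \eqref{eq-meancurvatureslices} to get (i) $\Leftrightarrow$ (ii) once one of $H$, $H_{\Sigma_t}$ is known to vanish together with $\langle AT,T\rangle$; (3) in the spacelike case use Lemma \ref{lem-shapeoperators}(ii) to trade $H_{\ssl}=0$ for $\langle AT,T\rangle=0$ given $H_{\ssr}=0$, completing (ii) $\Leftrightarrow$ (iii).
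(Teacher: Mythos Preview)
Your proof is correct and follows essentially the same route as the paper's: both arguments pivot on the pair of identities \eqref{eq-meancurvatureslices} and Lemma~\ref{lem-shapeoperators}(ii), reducing each of (i), (ii), (iii) to the conjunction $H=0$ and $\langle AT,T\rangle=0$. You are in fact slightly more careful than the paper in explicitly verifying that the coefficients $\phi$, $\mu$, and $1-\mu^2$ do not vanish under the no-horizontal-points hypothesis.
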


\begin{proof}
(i) $\Rightarrow$ (ii): Since we are assuming that $\Sigma$ is a vertical helicoid,
we have $H_{\Sigma_t}=0$ for all horizontal sections  $\Sigma_t\subset\Sigma,$ and
$\langle A_{\ssr}\g_{\ssr}\xi\,,\g_{\ssr}\xi\rangle_{\ssr}=0$ on $\Sigma.$
Thus, from  the identity \eqref{eq-meancurvatureslices} in Lemma \ref{lem-horizontalsection},
$H_{\ssr}=0,$ that is, $\Sigma_{\ssr}$ is minimal.

\vt

\noindent
(ii) $\Rightarrow$ (i):
Now, we have $H_{\ssr}=H_{\Sigma_t}=0$ for any horizontal section $\Sigma_t\subset\Sigma.$ In this case,
\eqref{eq-meancurvatureslices} yields $\langle A_{\ssr}T,T\rangle_{\ssr}=0,$ which implies
that $\nabla\xi$ is an asymptotic direction, that is,
$\Sigma$ is a vertical helicoid.

\vt

\noindent
(ii) $\Rightarrow$ (iii): We have $H=0$ and, as above, $\langle A_{\ssr}T,T\rangle_{\ssr}=0.$
Hence, by Lemma \ref{lem-shapeoperators}-(iii), $H_{\ssl}=0,$ i.e.,
$\Sigma$ is zero mean isocurved.

\vt

\noindent
(iii) $\Rightarrow$ (ii): From
$H=H_{\ssl}=0$  and
Lemma \ref{lem-shapeoperators}-(iii), one has $\langle A_{\ssr}T,T\rangle_{\ssr}=0.$ This, together with
identity \eqref{eq-meancurvatureslices}, gives that
the horizontal sections $\Sigma_t\subset\Sigma$ are minimal hypersurfaces of $M\times\{t\}.$ Hence, $\Sigma$ is a
vertical helicoid.
\end{proof}

Vertical helicoids can be constructed  by ``twisting'' minimal hypersurfaces, as
shown in the following examples.

\begin{example}[\emph{Twisted planes in $\R^3\times\R$}] \label{exam-helicoidsRnxR}
Given $a, k>0,$ consider the map
\[
\Psi(x,y,s):=
\left[
\begin{array}{cccc}
  \cos ks & -\sin ks & 0 & 0 \\
  \sin ks & \phantom{-}\cos ks & 0 & 0 \\
  0 & \phantom{-}0 & 1 & 0 \\
  0 & \phantom{-}0 & 0 & 1
\end{array}
\right]
\left[
\begin{array}{c}
   x \\
   0 \\
   y \\
   0
\end{array}
\right]+
a\left[
\begin{array}{c}
   0 \\
   0 \\
   0\\
   s
\end{array}
\right], \,\, (x,y,s)\in\R^3,
\]
which  we call a \emph{vertical twisting}
of the plane $\R^2\times\{0\}\subset\R^3$ in $\R^3\times\R.$
It is easily verified that
$\Psi$ is a parametrization of a properly  embedded hypersurface $\Sigma$ of $\R^3\times\R$.
Also, direct computations give that %A unit normal field for $\Sigma$ is
\[
N=\frac{(a\sin s,-a\cos s, 0,kx)}{\sqrt{a^2+(kx)^2}}
\]
is a unit normal field on $\Sigma$.
In particular, $\theta=kx/\sqrt{a^2+(kx)^2}$ depends only on $x$ and
$\theta^2\ne 1,$ that is, $\xi$ has no critical points on $\Sigma.$
Also, the inverse matrix
$[g^{ij}]$ of the first fundamental form of $\Sigma$ in this parametrization is
\[
[g^{ij}]=\left[
\begin{array}{ccc}
  1 & 0 & 0 \\
  0 & 1 & 0 \\
  0 & 0 & \frac{1}{a^2+x^2}
\end{array}\right].
\]
Therefore,
\[
\nabla\theta=\frac{\partial\theta}{\partial x}\frac{\partial\Psi}{\partial x}=
\frac{\partial\theta}{\partial x}(\cos s,\sin s, 0, 0) \,\Rightarrow
\, \langle\nabla\theta,\nabla\xi\rangle=\langle\nabla\theta,\partial_t\rangle=0.
\]
Thus, $\Sigma$ is a (minimal) vertical helicoid, since its horizontal sections $\Sigma_t$ are planes of $\R^3\times\{t\}.$
Moreover, its angle function $\theta$ satisfies
\[
2\theta^2-1=\frac{(kx)^2-a^2}{(kx)^2+a^2}\,,
\]
which implies that the nonempty open subset $\Sigma'$ of $\Sigma$ given by
\[
\Sigma':=\{\Psi(x,y,s)\in\Sigma\,;\, |x|>a/k\}
\]
is spacelike. So, by Theorem \ref{th-HR=HL}, $\Sigma'$ is
zero mean isocurved in $\R^3\times\R.$
\end{example}

\begin{example}[\emph{Twisted helicoids in $\R^n\times\R$}] \label{exam-twistedhelicoid}
Let us consider now the map
\[
\Psi(x,y,s):=
\left[
\begin{array}{cccc}
  \cos ks & -\sin ks & 0 & 0 \\
  \sin ks & \phantom{-}\cos ks & 0 & 0 \\
  0 & \phantom{-}0 & 1 & 0 \\
  0 & \phantom{-}0 & 0 & 1
\end{array}
\right]
\left[
\begin{array}{c}
   x\cos y \\
   x\sin y \\
   y \\
   0
\end{array}
\right]+
a\left[
\begin{array}{c}
   0 \\
   0 \\
   0\\
   s
\end{array}
\right], \,\, (x,y,s)\in\R^3.
\]

Clearly $\Sigma=\Psi(\R^3)$ is a  properly embedded hypersurface of $\R^3\times\R,$ which we call
a \emph{twisted helicoid}. A unit normal field to $\Sigma$ is
\[
N=\frac{1}{\sqrt{a^2(1+x^2)+(kx)^2}}(a\sin(y+ks),-a\cos(y+ks), ax,kx),
\]
so that $\theta=kx/\sqrt{a^2(1+x^2)+(kx)^2}.$ Again, we have $\theta^2\ne 1$ and
\[
\nabla\theta=g^{11}\frac{\partial\theta}{\partial x}\frac{\partial\Psi}{\partial x}=
g^{11}\frac{\partial\theta}{\partial x}(\cos(y+ks),\sin(y+ks),0,0),
\]
which yields $\langle \nabla\theta,\nabla\xi\rangle=0.$

Since, by construction, the
horizontal sections $\Sigma_t$ of $\Sigma$ are two-dimensional helicoids in $\R^3\times\{t\},$ we conclude
from the above that $\Sigma$ is a vertical helicoid in $\R^3\times\R.$ Moreover, if $k>a,$ then the set
\[
\Sigma':=\{\Psi(x,y,s)\in\Sigma\,;\, |x|>a/\sqrt{k^2-a^2} \}
\]
is easily seen to be spacelike and, so, zero mean isocurved.

Now, define the functions $f,\, g\colon\R^n\rightarrow\R$ by
\begin{align}
f(x_2\,,\dots ,x_{n-1},s) &=\cos(x_2+x_3+\cdots +x_{n-1}+s). \nonumber\\
g(x_2\,,\dots ,x_{n-1},s) &=\sin(x_2+x_3+\cdots +x_{n-1}+s). \nonumber
\end{align}
Applying induction on $n$ and proceeding as above, one concludes that the map
\[
\Psi(x_1\,, \dots ,x_{n-1},s)= (x_1f(x_2\,,\dots ,ks),x_1g(x_2\,,\dots ,ks),x_2\,,x_3\,, \dots ,x_{n-1},as)
\]
parametrizes a properly embedded minimal vertical helicoid $\Sigma^n\subset\R^n\times\R$ whose horizontal sections are
vertical helicoids in $\R^{n-1}\times\R.$ Furthermore, for $k>a,$ $\Sigma$
contains open spacelike zero mean isocurved subsets.
\end{example}

\begin{example}[\emph{Twisted Clifford torus in \,$\s^3\times\R$}] \label{exam-helicoidsS3xR}
Given $k>0,$ consider the immersion
\[
\Psi\colon\R^3\rightarrow\s^3\times\R\subset\R^5
\]
defined by the equality
\[\Psi(x,y,s)=(\cos \left( x+ks \right) \cos y ,\sin\left( x+ks
 \right) \cos y  ,\cos  x  \sin  y
  ,\sin  x  \sin y ,s).
\]
Then, $\Sigma=\Psi(\R^3)$ is proper and embedded in $\s^3\times\R.$
A computation shows that
\[
N=\frac{( {\sin y  \sin \left( x+ks \right)},-{\sin y\cos
 \left( x+ks \right)},- {\sin x\cos y}, {\cos x\cos y},{k\cos y\sin y})}{\sqrt{1+(k\cos y \sin y)^2}}
\]
is a unit normal to $\Sigma,$ which implies that  its angle function is given by
\[
\theta=\frac{k\cos y\sin y}{\sqrt{1+(k\cos y\sin y)^2}}=\frac{k\sin(2y)/2}{\sqrt{1+k^2\sin^2(2y)/4}}\,\cdot
\]
Also, the matrix $[g_{ij}]$ of the first fundamental form of $\Sigma$ is
\[
[g_{ij}]=\left[
\begin{array}{ccc}
  1 & 0 & k\cos^2y \\
  0 & 1 & 0 \\
  k\cos^2y & 0 & k^2\cos^2y+1
\end{array}
\right].
\]
In particular,  for its inverse $[g^{ij}],$ we have that
$g^{12}=g^{32}=0,$ since the corresponding cofactors of $[g_{ij}]$ clearly vanish.
This, together with the fact that $\theta$ depends only on $y,$ gives that
\[
\nabla\theta=g^{22}\frac{\partial\theta}{\partial y}\frac{\partial\Psi}{\partial y}
\,\Rightarrow\, \langle\nabla\theta,\nabla\xi\rangle =\langle\nabla\theta,\partial_t\rangle = 0,
\]
for  ${\partial\Psi}/{\partial y}$ is a horizontal vector.
Therefore, $\nabla\xi$ is an asymptotic direction of $\Sigma.$
Observing that each horizontal section of $\Sigma$ is a
Clifford torus, which is a compact embedded   minimal hypersurface of $\s^3,$  we conclude that
$\Sigma$ is a properly embedded minimal vertical helicoid of $\s^3\times\R.$

Finally, we have that the angle function of $\Sigma$ satisfies
\[
2\theta^2-1= %\frac{(k^2\sin^2(2y)/4)-1}{(k^2\sin^2(2y)/4)+1}
\frac{k^2\sin^2(2y)-4}{k^2\sin^2(2y)+4}\,\cdot
\]
Hence, if we assume  $k>2,$ we have that the open set
\[
\Sigma':=\{\Psi(x,y,s)\in\Sigma\,;\, y>\arcsin(2/k)/2\}\subset\Sigma
\]
is nonempty and zero mean isocurved in $\s^3\times\R.$
\end{example}

\begin{example}[\emph{Twisted hyperbolic helicoid in $\h^3\times\R$}] \label{exam-twistedhelicoid}
Consider the Lorentzian model of hyperbolic space
$\h^3\hookrightarrow\mathbb{L}^4=(\R^4, ds^2), \,\, ds^2=dx_1^2+dx_2^2+dx_3^2-dx_4^2.$
It is well known that the map
\[
(x,y)\in\R^2\mapsto (\sinh x\cos y,\sinh x\sin y, \cosh x\sinh y,\cosh x\cosh y)\in\h^3
\]
parametrizes a properly embedded minimal surface which is called the \emph{hyperbolic helicoid}
of $\h^3.$
Considering its twisting $\Psi:\R^3\rightarrow\h^3\times\R$ defined, for $k>0,$  by
\[
\Psi(x,y,s)=(\sinh x\cos (y+ks),\sinh x\sin (y+ks), \cosh x\sinh y,\cosh x\cosh y,as),
\]
we have that the hypersurface $\Sigma=\Psi(\R^3)$ is proper and embedded in $\h^3\times\R.$
A unit normal field for $\Sigma$ is given by
\[
N=\lambda
\left[
\begin{array}{c}
\phantom{-}\cosh x\sin(y+ks)\\
-\cosh x\cos(y+ks)\\
\phantom{k}\sinh x\cosh y\\
\phantom{k}\sinh x\sinh y\\
k\sinh x\cosh x
\end{array}
\right],
\]
where $\lambda=(\cosh^2x+\sinh^2x+(k\cosh x\sinh x)^2)^{-1/2}$.
Therefore, the angle function  of $\Sigma$
is $\theta=k\lambda\sinh x\cosh x,$ which depends only on $x.$

Proceeding as before, one easily
concludes that $\nabla\theta$ is horizontal, i.e., that $\nabla\xi$ is an asymptotic direction of $\Sigma.$
Hence, $\Sigma$ is a properly embedded minimal vertical helicoid in $\h^3\times\R$ whose
horizontal sections $\Sigma_t$ are hyperbolic helicoids of $\h^3\times\{t\}.$ Also, for
sufficiently large $k$,  $\Sigma$ contains open spacelike zero mean isocurved subsets.
\end{example}

\begin{example}[\emph{Twisted helicoid in $\s_\delta^3\times\R$}] \label{exam-berger}
Consider the product $\mathbb{S}_{\delta}^3\times \mathbb{R}$, where the first factor is a Berger sphere.
It is  well known  that, given $\alpha\in\R,$ the map
\[
(s,\uptau)\in\R^2\mapsto (e^{i\alpha s}\cos(\uptau),e^{is}\sin(\uptau))\in \mathbb{S}_{\delta}^3
\]
is a parametrization of a minimal helicoid of $\mathbb{S}_{\delta}^3$ (see, for instance, \cite{shimetal}).

From this helicoid, using the same twisting method of the previous examples,
we obtain a vertical helicoid in $\mathbb{S}_{\delta}^3\times \mathbb{R}$
that is given by
\[
\Psi(s,\uptau,u)=(e^{i(\alpha s+u)}\cos(\uptau),e^{i(s+u)}\sin(\uptau),au), \,\,\, a\ne 0.
\]

To see that $\Psi$ is indeed a vertical helicoid, it suffices to compute the angle
function $\theta$ and check that its gradient is horizontal.
After a long but straightforward computation, $\theta$ can be written as
\[
\theta=\frac{-\alpha\cos(\uptau)\sin(\uptau)}{\omega(\uptau)}\,,
\]
where $\omega(\uptau)$ is given by
\begin{eqnarray}
\omega(\uptau) &= & [\cos^4(\uptau)((1-\delta^2)\delta^2(\alpha+1)^2a^2-\alpha^2)\nonumber \\
             &+ & \cos^2(\uptau)(\delta^2(\alpha+1)(\delta^2(\alpha+1)-2)a^2+\alpha^2)+\delta^2a^2]^{1/2}.\nonumber
\end{eqnarray}

From these expressions, and after some further computations,
we get  that $\nabla\theta$ is horizontal. Also, for a convenient
choice of the parameters $\alpha,\,a, \delta$,  and of the range of $s,\,\uptau,$ and $u$,
$\Psi$ is a spacelike immersion.
\end{example}

\subsection{Vertical Helicoids as Graphs} \label{sec-graphs}

Let $u$ be a differentiable (i.e., $C^\infty$) function defined on a domain $\Omega\subset M.$
It is easily checked that
\begin{equation}\label{eq-normaltograph}
N=\frac{-\nabla u+\partial_t}{\sqrt{1+\|\nabla u\|^2}}\,,
\end{equation}
is a unit normal to $\Sigma={\rm graph}(u)\subset M\times\R$,
where,  by abuse of notation, we are writing $\nabla u$ instead of $\nabla u\circ\pi_{\scriptscriptstyle M}.$
In particular,
\begin{equation}\label{eq-thetaandu}
\theta=\frac{1}{\sqrt{1+\|\nabla u\|^2}} %  \quad\text{and}\quad  \theta_{\ssl}=\frac{-1}{\sqrt{1-\|\nabla u\|^2}}\,\cdot
\end{equation}
is the angle function of $\Sigma.$

Denoting  by ${\rm div}$ the divergence of fields on $M,$ as is well known,
$\Sigma={\rm graph}(u)$ is a minimal hypersurface of $M\times\R$ if and only if  $u$  satisfies the equation
\begin{equation}\label{eq-meancurvature0}
{\rm div}\left(\frac{\nabla u}{\sqrt{1+\|\nabla u\|^2}}\right)=0.
\end{equation}

\begin{lemma} \label{lem-graph}
Let $\Sigma$ be the graph of a differentiable function $u$ on a domain $\Omega\subset M$, and
let \,$\Sigma_t$ be a horizontal section of \,$\Sigma.$ Then, the following hold:
\begin{itemize}[parsep=.5ex]
  \item[{\rm i)}] $\Sigma$ is minimal in $M\times\R$ if and only if $u$ satisfies:
  \begin{equation}\label{eq-meancurvature}
   \Delta u-\frac{\|\nabla u\|}{1+\|\nabla u\|^2}\langle \nabla u, \nabla\|\nabla u\|\rangle=0.
\end{equation}
  \item[{\rm ii)}] The mean curvature  of \,$\Sigma_t$ is given by:
 \begin{equation}\label{eq-Hhorizontal}
H_{\Sigma_t}=\frac{\Delta u}{\|\nabla u\|}-\frac{\langle\nabla u,\nabla\|\nabla u\|\rangle}{\|\nabla u\|^2}\,\cdot
\end{equation}
\end{itemize}
\end{lemma}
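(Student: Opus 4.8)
The plan is to read off both identities by expanding a divergence on $M$ with the product rule $\mathrm{div}(fX)=f\,\mathrm{div}\,X+\langle\nabla f,X\rangle$, the only auxiliary fact needed being $\nabla(\|\nabla u\|^2)=2\|\nabla u\|\,\nabla\|\nabla u\|$.

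For part (i), I would start from the minimal-graph equation \eqref{eq-meancurvature0} and apply the product rule with $f=(1+\|\nabla u\|^2)^{-1/2}$ and $X=\nabla u$. Since $\mathrm{div}\,\nabla u=\Delta u$ and
\[
\nabla\bigl((1+\|\nabla u\|^2)^{-1/2}\bigr)=-(1+\|\nabla u\|^2)^{-3/2}\,\|\nabla u\|\,\nabla\|\nabla u\|,
\]
one obtains
\[
\mathrm{div}\!\left(\frac{\nabla u}{\sqrt{1+\|\nabla u\|^2}}\right)=\frac{1}{\sqrt{1+\|\nabla u\|^2}}\left(\Delta u-\frac{\|\nabla u\|}{1+\|\nabla u\|^2}\langle\nabla u,\nabla\|\nabla u\|\rangle\right).
\]
As the scalar factor is nowhere zero, this vanishes precisely when \eqref{eq-meancurvature} holds, which is (i).

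For part (ii), I would first identify the horizontal section: under the isometry $\pi_{\scriptscriptstyle M}|_{M_t}\colon M_t\to M$, the section $\Sigma_t$ of $\Sigma=\mathrm{graph}(u)$ is the level hypersurface $u^{-1}(t)\subset M$, which is regular since $\Sigma_t$ is transversal and so $\nabla u\neq 0$ along it (equivalently $\theta^2\neq 1$). Substituting \eqref{eq-normaltograph} and \eqref{eq-thetaandu} into \eqref{eq-horizontalnormal} and simplifying shows that its unit normal $\eta$ corresponds to $\nabla u/\|\nabla u\|$; hence $H_{\Sigma_t}$ equals, up to the orientation fixed by $\eta$, the divergence $\mathrm{div}(\nabla u/\|\nabla u\|)$ of the unit normal of a level set, and expanding this exactly as in (i), now with $f=\|\nabla u\|^{-1}$ in place of $(1+\|\nabla u\|^2)^{-1/2}$, produces the right-hand side of \eqref{eq-Hhorizontal}. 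Alternatively, one can bypass the level-set picture and use \eqref{eq-meancurvatureslices} of Lemma \ref{lem-horizontalsection}: compute $H=\mathrm{div}(\nabla u/\sqrt{1+\|\nabla u\|^2})$ as in (i), use $\langle AT,T\rangle=-\langle\nabla\theta,\nabla\xi\rangle/\|\nabla\xi\|^2$ (from $\nabla\theta=-A\nabla\xi$ and $T=\nabla\xi/\|\nabla\xi\|$) together with $\|\nabla\xi\|^2=1-\theta^2$, \eqref{eq-thetaandu} and $\phi=-(1+\|\nabla u\|^2)^{1/2}/\|\nabla u\|$, and simplify.

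I do not expect a genuine obstacle here: the computations are short. The one point demanding care is the consistent bookkeeping of orientation and sign conventions — in particular verifying that \eqref{eq-horizontalnormal} really gives $\eta=+\nabla u/\|\nabla u\|$ and that the convention $A=-\overbar\nabla N$ is carried over to $\Sigma_t$ correctly — so that the signs on the right-hand sides of \eqref{eq-meancurvature} and \eqref{eq-Hhorizontal} come out as written. I would therefore do (i) in two lines and derive (ii) either from the classical level-set mean-curvature formula or, with a little more bookkeeping, from Lemma \ref{lem-horizontalsection}.
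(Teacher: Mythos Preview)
Your approach is correct and essentially identical to the paper's: both parts are obtained by expanding $\mathrm{div}(\rho\nabla u)=\rho\Delta u+\langle\nabla\rho,\nabla u\rangle$ with $\rho=(1+\|\nabla u\|^2)^{-1/2}$ for (i) and $\rho=\|\nabla u\|^{-1}$ for (ii), after identifying the unit normal to $\Sigma_t$ with $\pm\nabla u/\|\nabla u\|$. Your caution about sign bookkeeping is well placed --- the paper simply asserts $\eta=-\nabla u/\|\nabla u\|$ rather than deriving it from \eqref{eq-horizontalnormal}, whereas your substitution yields the opposite sign; either way the divergence computation gives \eqref{eq-Hhorizontal} up to an overall sign convention.
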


\begin{proof}
Given a differentiable function $\rho$ on $\Omega,$ it is an elementary fact that
\begin{equation}\label{eq-elementary}
{\rm div}(\rho\nabla u)=\rho\Delta u+\langle\nabla\rho,\nabla u\rangle.
\end{equation}
Then, considering   \eqref{eq-meancurvature0} and setting
$\rho=1/\sqrt{1+\|\nabla u\|^2}$, one  easily concludes that
the equations \eqref{eq-meancurvature0} and  \eqref{eq-meancurvature} are equivalent.

From \eqref{eq-normaltograph}, we have that
$\eta=-\nabla u/\|\nabla u\|$
is a unit normal field to $\Sigma_t$\,.
Therefore, if we choose  an   orthonormal frame
$\{X_1\,,\dots ,X_{n-1}\}$ in $T\Sigma_t$\,, we have
\[
H_{\Sigma_t}=\sum_{i=1}^{n-1}-\left\langle\overbar\nabla_{X_i}\eta,X_i\right\rangle={\rm div}\,\frac{\nabla u}{\|\nabla u\|}\,\cdot
\]
Now, equality \eqref{eq-Hhorizontal} follows from \eqref{eq-elementary} if we set  $\rho=1/\|\nabla u\|.$
\end{proof}

The identities in the above lemma suggest  the consideration  of
horizontally homothetic functions, which we now introduce (cf. \cite{ou1, ou2}).

\begin{definition}
We say that a smooth function $u$
on $\Omega\subset M$ is \emph{horizontally homothetic} if the identity
$\langle\nabla u,\nabla\|\nabla u\|\rangle=0$ holds on $\Omega.$
\end{definition}

Our next result establishes the uniqueness of vertical helicoids
as  minimal hypersurfaces which are local graphs of harmonic functions.

\begin{theorem} \label{th-isocurvedgraph}
Let $\Sigma={\rm graph}(u),$ where $u$ is a
smooth function defined on a domain $\Omega\subset M$ whose gradient never vanishes. Then,
if the angle function of $\Sigma$ is nonconstant, the following are equivalent:
\begin{itemize}[parsep=1ex]
  \item[{\rm i)}]  $\Sigma$ is a vertical helicoid in $M\times\R.$
  \item[{\rm ii)}] $u$ is harmonic and $\Sigma$ is minimal.
  \item[{\rm iii)}] $u$ is harmonic and horizontally homothetic.
\end{itemize}
\end{theorem}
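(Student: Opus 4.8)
The plan is to establish the chain (i) $\Leftrightarrow$ (ii) $\Leftrightarrow$ (iii) using Lemma \ref{lem-graph} together with the characterization of vertical helicoids furnished by Theorem \ref{th-HR=HL}. First I would observe that, for a graph $\Sigma = \mathrm{graph}(u)$ with $\nabla u$ never vanishing, the hypersurface has no horizontal points: by \eqref{eq-thetaandu} the angle function satisfies $\theta^2 = 1/(1+\|\nabla u\|^2) < 1$ everywhere. Hence Definition \ref{def-helicoid} and Theorem \ref{th-HR=HL} apply directly, and (i) is equivalent to: $\Sigma$ is minimal \emph{and} every horizontal section $\Sigma_t$ is minimal. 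The horizontal sections here are the level sets $\{u = c\}$ lifted to height $c$, and they are automatically transversal to $M_t$ precisely because $\nabla u \ne 0$, so each $\Sigma_t$ is a genuine hypersurface of $M_t$.

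For (i) $\Leftrightarrow$ (ii): by Lemma \ref{lem-graph}(ii), $H_{\Sigma_t} = 0$ for all $t$ is equivalent to
\[
\Delta u = \frac{\langle \nabla u, \nabla\|\nabla u\|\rangle}{\|\nabla u\|}
\]
holding on $\Omega$. Comparing this with the minimality equation \eqref{eq-meancurvature} from Lemma \ref{lem-graph}(i), namely $\Delta u = \frac{\|\nabla u\|}{1+\|\nabla u\|^2}\langle \nabla u, \nabla\|\nabla u\|\rangle$, and substituting one into the other, I get that if $\Sigma$ is minimal then $\langle \nabla u, \nabla\|\nabla u\|\rangle = 0$ at every point where the two coefficients $\frac{1}{\|\nabla u\|}$ and $\frac{\|\nabla u\|}{1+\|\nabla u\|^2}$ differ — and these coefficients are equal only if $\|\nabla u\|^2(1 - \ldots)$, i.e. only on the set where $\|\nabla u\|$ is such that $1 = \|\nabla u\|^2/(1+\|\nabla u\|^2)$, which is impossible. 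So minimality forces $\langle \nabla u, \nabla\|\nabla u\|\rangle = 0$ identically, hence $\Delta u = 0$ by either equation; conversely, if $u$ is harmonic and $\Sigma$ is minimal, then \eqref{eq-meancurvature} gives $\langle\nabla u, \nabla\|\nabla u\|\rangle = 0$ and so \eqref{eq-Hhorizontal} gives $H_{\Sigma_t} = 0$. Thus (i) $\Leftrightarrow$ (ii). The equivalence (ii) $\Leftrightarrow$ (iii) is then immediate: given harmonicity, \eqref{eq-meancurvature} reduces to $\langle\nabla u, \nabla\|\nabla u\|\rangle = 0$, i.e. horizontal homotheticity, and conversely horizontal homotheticity plus harmonicity makes both terms of \eqref{eq-meancurvature} vanish, so $\Sigma$ is minimal.

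The one place demanding care — and the step I expect to be the main obstacle — is using the hypothesis that the angle function is \emph{nonconstant}. Without it, a totally geodesic slice-type graph (constant $\theta$) could satisfy (ii) or (iii) while failing to be a vertical helicoid in the sense of Definition \ref{def-helicoid}, which explicitly excludes constant angle function. So in the implication (ii) $\Rightarrow$ (i) and (iii) $\Rightarrow$ (i) I must invoke the nonconstancy of $\theta$ to land inside the definition; equivalently, since $\theta = (1+\|\nabla u\|^2)^{-1/2}$, the hypothesis says $\|\nabla u\|$ is nonconstant, and I should check this is exactly what Theorem \ref{th-HR=HL} needs. The remaining verifications — transversality of the level sets, the algebraic manipulation showing the two coefficients in \eqref{eq-meancurvature} and \eqref{eq-Hhorizontal} cannot coincide — are routine and I would not belabor them.
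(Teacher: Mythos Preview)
Your proof is correct and follows essentially the same route as the paper's: both arguments invoke Theorem~\ref{th-HR=HL} to reduce (i) to ``$\Sigma$ minimal and all $\Sigma_t$ minimal,'' then compare the two expressions for $\Delta u$ coming from Lemma~\ref{lem-graph}(i) and~(ii) to force $\langle\nabla u,\nabla\|\nabla u\|\rangle=0$, after which harmonicity and the equivalence with (iii) follow immediately. One small wording slip: in your (i)$\Rightarrow$(ii) paragraph you write ``if $\Sigma$ is minimal then $\langle\nabla u,\nabla\|\nabla u\|\rangle=0$,'' but you are actually using \emph{both} hypotheses (minimality of $\Sigma$ \emph{and} of every $\Sigma_t$) at that point---minimality of $\Sigma$ alone does not suffice---so just tighten that sentence.
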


\begin{proof}
Assume that $\Sigma$ is a vertical helicoid. Then, $H_{\Sigma_t}=0$ for any
horizontal section $\Sigma_t$ of $\Sigma.$ Also,
by Theorem \ref{th-HR=HL}, $\Sigma$ is minimal. So, by Lemma \ref{lem-graph},
$u$ satisfies  equation \eqref{eq-meancurvature}.
Combining it with \eqref{eq-Hhorizontal}, we have
\[
\frac{\langle\nabla u,\nabla\|\nabla u\|\rangle}{\|\nabla u\|(1+\|\nabla u\|^2)}=0,
\]
which yields $\langle\nabla u,\nabla\|\nabla u\|\rangle=0.$
This, together with \eqref{eq-meancurvature}, implies that $u$ is a harmonic
function, that is, (i) $\Rightarrow$ (ii).

Let us suppose now that (ii) holds. Then,  $u$ satisfies \eqref{eq-meancurvature}. Since $u$ is harmonic,
it follows that $u$ is also  horizontally homothetic. Now, we have from \eqref{eq-Hhorizontal} that the horizontal sections of
$\Sigma$ are minimal. Hence, from Theorem \ref{th-HR=HL},  $\Sigma$ is a vertical helicoid, which shows that
(i) and (ii) are equivalent.

The equivalence between (ii) and (iii) follows directly from Lemma \ref{lem-graph}-(i).
\end{proof}

%As we did in Section \ref{sec-HR=HL} with Theorem
We now make use of Theorem \ref{th-isocurvedgraph} to obtain vertical helicoids $\Sigma\subset M\times\R$ which contain
spacelike pieces of zero mean isocurved hypersurfaces. Before that, let us remark that, by \eqref{eq-thetaandu},
the angle function $\theta$ of $\Sigma={\rm graph}(u)$ satisfies
\[
2\theta^2-1=\frac{(1-\|\nabla u\|^2)}{(1+\|\nabla u\|^2)}\,\cdot
\]
Therefore, $\Sigma={\rm graph}(u)$ is a spacelike hypersurface if and only if $\|\nabla u\|<1.$

\begin{example}
%Let $a_1\,, \dots ,a_{n-2}\,, b\in\R,$ %be such that $a_1^2+\cdots +a_{n-2}^2<1.$ Consider in $\R^n$
Consider the set  $\Omega$ of points $(x_1\,, \dots ,x_n)\in\R^n$  which satisfy $x_{n-1}>0$  and define
on it the function
\[
u(x_1\,, \dots ,x_n)=\sum_{i=1}^{n-2}a_ix_i+b\arctan(x_n/x_{n-1}).
\]
From a direct computation, one concludes that $u$ is harmonic and horizontally homothetic.
Thus, Theorem \ref{th-isocurvedgraph} gives that $\Sigma={\rm graph}(u)$ is a vertical helicoid.
Moreover, the gradient of $u$ is
\[
\nabla u(x_1\,,\dots ,x_n)=\left(a_1\,, \dots ,a_{n-2}\,, \frac{-bx_n}{x_{n-1}^2+x_n^2}\,, \frac{bx_{n-1}}{x_{n-1}^2+x_n^2}\right),
\]
which implies that
\begin{equation}\label{eq-normgrad}
\|\nabla u\|^2=\sum_{i=1}^{n-2}a_i^2+\frac{b^2}{x_{n-1}^2+x_n^2}\,\cdot
\end{equation}

Therefore, if we assume  $a_1^2+\cdots +a_{n-2}^2<1$ and consider the set $\Omega'\subset\Omega$
of points $(x_1\,,\dots ,x_n)\in\Omega$ for which the right hand side of \eqref{eq-normgrad} is $<1,$ we have that
$\Sigma'={\rm graph}(u|_{\Omega'})$ is spacelike and, in particular, zero mean isocurved in $\R^n\times\R.$
\end{example}

\begin{example}[\emph{Y-L Ou examples}] The following functions
$u:M\rightarrow\R$, which were considered by
Y-L Ou in \cite{ou1,ou2}, are all harmonic and horizontally homothetic.
Therefore, by Theorem \ref{th-isocurvedgraph}, their graphs are complete embedded
vertical helicoids in the corresponding product $M\times\R.$
\vt

\begin{itemize}[parsep=1.2ex]
\item[i)] $M=\h^n=(\R^n_+,x_n^{-2}g_{\scriptscriptstyle \rm Euc}),$\,\, $u(x_1,\dots ,x_n)=ax_i\,, \,\,\,  1\le i\le n-1.$

\item[ii)] $M=(\R^3,g_{\scriptscriptstyle \rm Nil}),$  \,$g_{\scriptscriptstyle\rm Nil}=dx^2+dy^2+(dz-xdy)^2,$\,\,\, $u(x,y,z)=a(z-xy/2).$

\item[iii)] $M=(\R^3,g_{\scriptscriptstyle \rm Sol}),$ \,$g_{\scriptscriptstyle\rm Sol}=e^{2z}dx^2+e^{-2z}dy^2+dz^2,$ \,\,\, $u(x,y,z)=az.$
\end{itemize}
\vt

We remark that, in contrast with (i), in (ii) and (iii) the horizontal sections of $\Sigma={\rm graph}(u)$ are non totally geodesic.
Also, in all cases, for certain suitable values of the parameter $a,$ $\Sigma$ has nonempty spacelike zero mean isocurved open sets.
\end{example}

\subsection{Construction and Local Characterization of Vertical Helicoids} \label{subsec-constructionhelicoids}
In this section, we generalize the  method for constructing
vertical helicoids in $M\times\R$ which we applied in Examples \ref{exam-helicoidsRnxR}--\ref{exam-berger}.
We also give a local characterization of
vertical helicoids whose horizontal sections are totally geodesic.

Let $I\owns 0$ be an open interval in $\R$  and  let
\[
\Gamma_s:M\rightarrow M, \,\,  s\in I,
\]
be a one-parameter group of isometries of $M$ such that
$\Gamma_0$ is the identity map.
Choose a hypersurface $\Sigma_0^{n-1}\subset M^n,$ define
$\Sigma_s^{n-1}\subset M^n$ by
\[
\Sigma_s=\Gamma_s(\Sigma_0), \,\, s\in I,
\]
and
let  $\eta$ and $\eta_s=\Gamma_{s_{*}}\eta$ be  unit normal fields on
$\Sigma$ and $\Sigma_s$, respectively.

\begin{definition}
Given a constant $a>0,$  we call the   hypersurface
\begin{equation}\label{eq-hypasymptoticline}
\Sigma :=\{(\Gamma_s(p), as)\in M\times\R \,;\, p\in\Sigma_0, \, s\in I\}\subset M\times\R
\end{equation}
the $a$-\emph{pitched twisting} of \,$\Sigma_0$ \emph{determined by}
$\{\Gamma_s\,;\, s\in I\}\subset{\rm Isom}\,(M).$
\end{definition}

Given $p\in\Sigma_0$, denote by $\alpha_p$ the  orbit of $p$ in $M$
under the action of $\Gamma_s$, that is,
\[
\alpha_p(s):=\Gamma_s(p)\in\Sigma_s, \,\, s\in I.
\]
Finally, define  the  $\nu$-\emph{function} of  $\Sigma$ as
\begin{equation}\label{eq-nufunction}
\nu(\alpha_p(s), as):=\langle\alpha_p'(s),\eta_s(\alpha_p(s))\rangle, \,\,\,  (\alpha_p(s), as)\in\Sigma.
\end{equation}

\begin{lemma} \label{lem-constructionhelicoid}
Given $a>0,$ let $\Sigma\subset M\times\R$ be the $a$-pitched twisting of
a hypersurface $\Sigma_0\subset M$ determined by a one-parameter
group $\{\Gamma_s\,;\, s\in I\}\subset{\rm Isom}(M).$
%Assume that the $\nu$-function of $\Sigma$ is independent of $s.$
Then, $\nabla\xi$ never vanishes on $\Sigma.$ Furthermore, $\nabla\xi$
defines an asymptotic direction on $\Sigma$
if and only if the gradient $\nabla\nu$  of the $\nu$-function of \,$\Sigma$ is
a horizontal field.
If so, $\Sigma$ is a minimal vertical helicoid in $M\times\R$, provided
\,$\Sigma_0$ is minimal in $M,$ and $\nu$ is nonconstant on $\Sigma.$
Under these conditions, the open set
\[
\Sigma'=\{(\alpha_p(s),as)\in\Sigma \,;\, |\nu(\alpha_p(s),as)|>a\}\subset\Sigma\,,
\]
if nonempty, is spacelike and, then, zero mean isocurved in $M\times\R.$
\end{lemma}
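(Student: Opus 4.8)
The plan is to parametrize $\Sigma$ explicitly via the map $\Psi(p,s)=(\Gamma_s(p),as)$ with $p\in\Sigma_0$, $s\in I$, and to compute its natural frame. For a local orthonormal frame $\{e_1,\dots,e_{n-1}\}$ of $T\Sigma_0$ at $p$, the vectors $\Gamma_{s_*}e_i$ span $T\Sigma_s$, and together with the orbit velocity $\alpha_p'(s)=\partial_s\Gamma_s(p)$ and $\partial_t$ they give a spanning set for $T\Sigma$. The first thing to establish is that $\nabla\xi$ never vanishes: a horizontal point would force $\partial_t\in T\Sigma^\perp$, but $\Psi_s=\alpha_p'(s)+a\partial_t$ lies in $T\Sigma$ and has vertical component $a>0$, so $\langle\partial_t,\cdot\rangle$ cannot vanish identically on $T\Sigma$; hence $\theta^2\ne 1$ everywhere and $\nabla\xi\ne 0$.

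Next I would produce an explicit formula for the unit normal $N$ of $\Sigma$ and hence for $\theta$. Since each $\eta_s$ is normal to $\Sigma_s$ inside $M_{as}$, a normal vector to $\Sigma$ should be a combination of $\eta_s$ and $\partial_t$; imposing orthogonality to $\Psi_s=\alpha_p'(s)+a\partial_t$ and to each $\Gamma_{s_*}e_i$ (which are automatically orthogonal to $\eta_s$ and to $\partial_t$) pins it down up to scale as $N=\lambda(\,\eta_s-\tfrac{\nu}{a}\partial_t\,)$ for a normalizing factor $\lambda$, where $\nu=\langle\alpha_p'(s),\eta_s\rangle$ is exactly the $\nu$-function of the statement. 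Computing $\lambda=(1+\nu^2/a^2)^{-1/2}$ gives $\theta=\langle N,\partial_t\rangle=-\tfrac{\nu/a}{\sqrt{1+\nu^2/a^2}}$, i.e. $\theta$ is, up to sign, $\nu/\sqrt{a^2+\nu^2}$. In particular $\theta$ is a strictly monotone function of $\nu$, so $\theta$ is nonconstant iff $\nu$ is nonconstant, and $\nabla\theta$ is a (nonzero scalar multiple of) $\nabla\nu$ plus possibly a vertical component — more precisely, writing $\theta=h(\nu)$ one gets $\nabla\theta=h'(\nu)\nabla\nu$, so $\nabla\theta$ is horizontal if and only if $\nabla\nu$ is horizontal (both being zero is excluded once $\nu$ is nonconstant, away from isolated critical points). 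By Remark \ref{rem-helices}, $\nabla\xi$ is an asymptotic direction of $\Sigma$ iff $\langle\nabla\theta,\nabla\xi\rangle=0$; and since $\nabla\xi=\partial_t-\theta N$ with $\langle\nabla\theta,\partial_t\rangle$ being the only vertical contribution, one checks $\langle\nabla\theta,\nabla\xi\rangle=\langle\nabla\theta,\partial_t\rangle$, so the asymptotic condition is equivalent to $\nabla\theta$ — equivalently $\nabla\nu$ — being horizontal. This is the main bookkeeping step and the place where one must be careful about which inner products are taken on $\Sigma$ versus on $M\times\R$.

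Once $\Sigma$ is known to be a hypersurface with $\nabla\xi$ asymptotic and nonconstant angle function, the remaining assertions are short. Each horizontal section of $\Sigma$ is $\Sigma_{as}=\Gamma_s(\Sigma_0)$, which is isometric to $\Sigma_0$ by the isometry $\Gamma_s$; hence if $\Sigma_0$ is minimal in $M$, every $\Sigma_t\subset\Sigma$ is minimal, and by Definition \ref{def-helicoid} $\Sigma$ is a vertical helicoid, which by Theorem \ref{th-HR=HL} is minimal. Finally, from $2\theta^2-1=\dfrac{\nu^2/a^2-1}{\nu^2/a^2+1}=\dfrac{\nu^2-a^2}{\nu^2+a^2}$ we read off that the spacelike locus $2\theta^2>1$ is exactly $\{|\nu|>a\}$, i.e. the set $\Sigma'$; and where it is nonempty it is spacelike, hence zero mean isocurved by the equivalence (i)$\Leftrightarrow$(iii) of Theorem \ref{th-HR=HL}. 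The one genuine obstacle is the normal-field computation in the second paragraph — verifying that the $\Gamma_{s_*}e_i$ are orthogonal to $\partial_t$ and to $\eta_s$, that $N$ as written is truly normal to all of $T\Sigma$ (this uses that $\eta_s$ is normal to $\Sigma_s$ in $M_{as}$ and that $\Gamma_s$ is an isometry so $\eta_s\perp\Gamma_{s_*}e_i$), and correctly tracking the scalar factors so that the identity $\langle\nabla\theta,\nabla\xi\rangle=\langle\nabla\theta,\partial_t\rangle$ comes out clean; everything else is formal.
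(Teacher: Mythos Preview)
Your proposal is correct and follows essentially the same approach as the paper: you compute the tangent space decomposition $T\Sigma=T\Sigma_s\oplus\mathrm{Span}\{\alpha_p'+a\partial_t\}$, write the unit normal as a combination of $\eta_s$ and $\partial_t$ to obtain $\theta=\pm\nu/\sqrt{a^2+\nu^2}$, deduce that $\nabla\theta$ is a scalar multiple of $\nabla\nu$ so that the asymptotic condition $\langle\nabla\theta,\nabla\xi\rangle=\langle\nabla\theta,\partial_t\rangle=0$ is equivalent to $\nabla\nu$ being horizontal, and then invoke Theorem~\ref{th-HR=HL} for the minimality and the spacelike/zero-mean-isocurved conclusions. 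The paper's argument is identical in structure, differing only in that it reads off $\theta^2\ne 1$ directly from the explicit formula for $\theta$ rather than arguing by contradiction, and it states the normal field without the intermediate justification you sketch.
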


\begin{proof}
Given a point $x=(\alpha_p(s),as)\in\Sigma,$
we have that
\begin{equation}\label{eq-decomposition}
T_x\Sigma=T_{\alpha_p(s)}\Sigma_s\oplus{\rm Span}\{\partial_s\}, \,\,\,\, \partial_s=\alpha_p'(s)+a\partial_t.
\end{equation}
Hence, a unit normal field $N$ for $\Sigma$ in $T(M\times\R)$ can be defined as
\[
N(x):=\frac{-a\eta_s(\alpha_p(s))+\nu(x)\partial_t}{\sqrt{a^2+\nu^2(x)}}\,, \,\,\, x=(\alpha_p(s), as)\in\Sigma\,.
\]
In particular, the angle function of $\Sigma$ at $x$ is given by
\begin{equation}\label{eq-thetanu}
\theta(x)=\frac{\nu(x)}{\sqrt{a^2+\nu^2(x)}}\,\cdot
\end{equation}
Hence, $\theta^2\ne 1$, which implies that $\nabla\xi$ never vanishes on $\Sigma.$
Equality \eqref{eq-thetanu} also gives that $\nabla\theta(x)$ is a multiple of $\nabla\nu(x).$ So,
$\nabla\xi$ is an asymptotic direction of $\Sigma$ if and only if $\langle\nabla\nu(x),\partial_t\rangle =0$
for all $x\in\Sigma$. This proves the first part of the statement.
%$(a+\|\alpha_p'(s)\|^2)\lambda=-\langle\nabla^s\nu|_{\Sigma_s}(\alpha_p(s)),\alpha_p'(s)\rangle.$

For the second part, we have just to consider Theorem \ref{th-HR=HL} and observe that all the horizontal sections
of $\Sigma$ are isometric to $\Sigma_0$\,. Hence,  they are minimal if
$\Sigma_0$\, is minimal in $M.$ In addition, a direct computation yields
$
2\theta^2-1=\frac{\nu^2-a^2}{\nu^2+a^2}\,,
$
which implies that $\Sigma'$, if nonempty, is spacelike. This finishes the proof.
\end{proof}

Let $\Sigma$ be as in the above lemma. Given
$x=(\alpha_p(s), as)\in\Sigma,$
considering \eqref{eq-decomposition},
we will denote by $\nabla^s\nu(x)$ the component of $\nabla\nu(x)$ on $T_{\alpha_p(s)}\Sigma_s.$
So, on $\Sigma,$ we have
\[
\nabla\nu=\nabla^{s}\nu+\frac{\langle\nabla\nu,\partial_t\rangle}{a}\partial_s\,.
\]

\begin{lemma}  \label{lem-nuindependent-s}
Let $\Sigma$ be as in Lemma \ref{lem-constructionhelicoid}. Assume that its
$\nu$-function is independent of $s,$ i.e., $\langle\nabla\nu,\partial_s\rangle=0$  on $\Sigma.$
Then $\nabla\nu$ is a horizontal field on $\Sigma$ if and only if
\begin{equation}\label{eq-nuindependent}
\langle\nabla^s\nu(x)\,,\alpha_p'(s)\rangle=0 \,\,\,\,\, \forall x=(\alpha_p(s),as)\in\Sigma.
\end{equation}
\end{lemma}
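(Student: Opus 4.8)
The plan is to exploit the (non-orthogonal) splitting $T_x\Sigma=T_{\alpha_p(s)}\Sigma_s\oplus\mathrm{Span}\{\partial_s\}$ from \eqref{eq-decomposition} together with the displayed identity
\[
\nabla\nu=\nabla^{s}\nu+\frac{\langle\nabla\nu,\partial_t\rangle}{a}\,\partial_s
\]
that precedes the statement. Writing $c:=\langle\nabla\nu,\partial_t\rangle/a$ for the coefficient of $\partial_s$, the assertion ``$\nabla\nu$ is horizontal'' is precisely ``$c=0$''. So everything reduces to computing $c$ a second way, using the standing hypothesis $\langle\nabla\nu,\partial_s\rangle=0$.

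First I would pair the displayed identity with $\partial_s=\alpha_p'(s)+a\partial_t$. Since $\nabla^{s}\nu(x)$ lies in $T_{\alpha_p(s)}\Sigma_s\subset TM$, it is horizontal, so $\langle\nabla^{s}\nu,\partial_t\rangle=0$ and $\langle\nabla^{s}\nu,\partial_s\rangle=\langle\nabla^{s}\nu,\alpha_p'(s)\rangle$. Setting $L:=\langle\partial_s,\partial_s\rangle=\|\alpha_p'(s)\|^2+a^2>0$, I then obtain
\[
0=\langle\nabla\nu,\partial_s\rangle=\langle\nabla^{s}\nu,\alpha_p'(s)\rangle+cL,
\]
that is, $c=-\langle\nabla^{s}\nu,\alpha_p'(s)\rangle/L$.

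Finally, I would compare the two expressions for $c$. From $c=\langle\nabla\nu,\partial_t\rangle/a$ and $c=-\langle\nabla^{s}\nu,\alpha_p'(s)\rangle/L$ it follows that
\[
\langle\nabla\nu,\partial_t\rangle=-\frac{a}{L}\,\langle\nabla^{s}\nu(x),\alpha_p'(s)\rangle ,
\]
and since $a/L\ne 0$ the left-hand side vanishes if and only if \eqref{eq-nuindependent} holds; hence $\nabla\nu$ is a horizontal field on $\Sigma$ exactly when $\langle\nabla^{s}\nu(x),\alpha_p'(s)\rangle=0$ for every $x=(\alpha_p(s),as)\in\Sigma$. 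There is essentially no hard step here: the only point requiring care is that the splitting \eqref{eq-decomposition} is orthogonal in neither the Riemannian nor the Lorentzian metric, so the coefficient $c$ has to be extracted by pairing against $\partial_t$ (which annihilates the horizontal summand) rather than by orthogonal projection; the hypothesis $\langle\nabla\nu,\partial_s\rangle=0$ is precisely what turns this coefficient into the stated condition along the orbit direction $\alpha_p'(s)$.
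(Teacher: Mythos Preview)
Your argument is correct and is essentially the same as the paper's: both use the hypothesis $\langle\nabla\nu,\partial_s\rangle=0$ together with the decomposition $\nabla\nu=\nabla^{s}\nu+\frac{\langle\nabla\nu,\partial_t\rangle}{a}\partial_s$ and $\partial_s=\alpha_p'(s)+a\partial_t$ to obtain a nonzero scalar relating $\langle\nabla\nu,\partial_t\rangle$ to $\langle\nabla^{s}\nu,\alpha_p'(s)\rangle$. The only difference is organizational---you pair the decomposition directly with $\partial_s$, while the paper first isolates $\langle\nabla\nu,\partial_t\rangle=-\tfrac{1}{a}\langle\nabla\nu,\alpha_p'\rangle$ and then substitutes the decomposition into the right-hand side---but the computations are equivalent.
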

\begin{proof}
Since $0=\langle\nabla\nu,\partial_s\rangle=\langle\nabla\nu,\alpha_p'+a\partial_t\rangle,$
we have
\[
\langle\nabla\nu,\partial_t\rangle=-\frac{1}{a}\langle\nabla\nu,\alpha_p'\rangle=
-\frac{1}{a}\left(\langle\nabla^s\nu,\alpha_p'\rangle+\frac{\langle\nabla\nu,\partial_t\rangle}{a}\|\alpha_p'\|^2\right).
\]
Hence, $\langle\nabla\nu,\partial_t\rangle=0$ if and only if $\langle\nabla^s\nu,\alpha_p'\rangle=0.$
\end{proof}

Recall that the \emph{cone} over a given  hypersurface $\Sigma_0^{n-1}$ of \,$\s^{n}\subset\R^{n+1}$
is the hypersurface $\widehat\Sigma_0$ of $\R^{n+1}$  which is defined as
\[
\widehat\Sigma_0:=\{rp\in\R^{n+1} \,;\, r\in (0,+\infty),\,  p\in \Sigma_0\}.
\]

It is an elementary fact that a hypersurface $\Sigma_0$ is minimal in $\s^n$ if and only if its associated cone
$\widehat\Sigma_0$ is minimal in $\R^{n+1}.$

\begin{lemma}  \label{lem-cone}
Assume  that $\Sigma_0$ is a hypersurface of \,$\s^n$ and
let $\widehat\Sigma_0$ be the cone of \,$\R^{n+1}$ over  $\Sigma_0$.
Assume further that $\{\Gamma_s\,;\, s\in I\}$ is a one-parameter subgroup of the orthogonal group $O(n+1)={\rm Isom}(\s^{n}).$
Given $a>0,$ denote by $\Sigma\subset\s^n\times\R$ (respect. $\widehat\Sigma\subset\R^{n+1}\times\R$) the
$a$-pitched twisting of \,$\Sigma_0$ in \,$\s^n\times\R$ (respect. $\R^{n+1}\times\R$) determined by
$\{\Gamma_s\,;\, s\in I\}$, that is,
\begin{itemize}[parsep=1ex]
  \item $\Sigma :=\{(\Gamma_s(p), as)\in \s^{n}\times\R \,;\, p\in\Sigma_0, \, s\in I\}\subset \s^n\times\R.$

  \item $\widehat\Sigma :=\{(\Gamma_s(rp), as)\in \R^{n+1}\times\R \,;\,  rp\in\widehat\Sigma_0, \, s\in I\}\subset \R^{n+1}\times\R.$
\end{itemize}
Under these conditions, \,$\Sigma$ is a vertical helicoid in $\s^n\times\R$ if and only if
\,$\widehat\Sigma$ is a vertical helicoid in $\R^{n+1}\times\R.$ Moreover, open
spacelike subsets occur in $\Sigma$ if and only if they occur in $\widehat\Sigma$.
\end{lemma}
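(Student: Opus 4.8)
The plan is to reduce Lemma \ref{lem-cone} to a direct comparison of the two $\nu$-functions via the scaling structure relating $\Sigma_0$ to its cone $\widehat\Sigma_0$. Write points of $\widehat\Sigma_0$ as $rp$ with $r>0$, $p\in\Sigma_0\subset\s^n$. The unit normal $\widehat\eta$ to $\widehat\Sigma_0$ in $\R^{n+1}$ at $rp$ equals the unit normal $\eta$ to $\Sigma_0$ in $\s^n$ at $p$ (it is tangent to the sphere of radius $r$ and orthogonal there to $\Sigma_0$), and since each $\Gamma_s\in O(n+1)$ commutes with scaling, $\widehat\Sigma_s=\Gamma_s(\widehat\Sigma_0)$ is again the cone over $\Sigma_s=\Gamma_s(\Sigma_0)$, with $\widehat\eta_s=\Gamma_{s*}\widehat\eta=\eta_s$ under the same identification. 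First I would record these facts, then compute the orbit derivatives: for $q=rp$, the orbit $\widehat\alpha_q(s)=\Gamma_s(rp)=r\,\Gamma_s(p)=r\,\alpha_p(s)$, so $\widehat\alpha_q'(s)=r\,\alpha_p'(s)$. Hence the $\nu$-function of $\widehat\Sigma$ at $(\widehat\alpha_q(s),as)$ is
\[
\widehat\nu(\widehat\alpha_q(s),as)=\langle \widehat\alpha_q'(s),\widehat\eta_s\rangle=r\,\langle \alpha_p'(s),\eta_s(\alpha_p(s))\rangle=r\,\nu(\alpha_p(s),as).
\]
So $\widehat\nu=r\nu$ along the radial rays, and in particular $\widehat\nu$ is independent of $r$ up to the multiplicative factor $r$.

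The key step is then to show $\nabla\widehat\nu$ is horizontal (i.e.\ $\langle\nabla\widehat\nu,\partial_t\rangle=0$) on $\widehat\Sigma$ if and only if $\nabla\nu$ is horizontal on $\Sigma$. By Lemma \ref{lem-constructionhelicoid}, this equivalence is exactly the statement that $\widehat\Sigma$ is a vertical helicoid iff $\Sigma$ is (using that $\widehat\Sigma_0$ is minimal in $\R^{n+1}$ iff $\Sigma_0$ is minimal in $\s^n$, the elementary fact recalled just before the lemma, and that $\widehat\nu$ is nonconstant iff $\nu$ is, since they differ by the factor $r$ which varies). To prove the equivalence I would use the product decomposition $T\widehat\Sigma = T\widehat\Sigma_s \oplus \mathrm{Span}\{\partial_s\}$ from \eqref{eq-decomposition} (with $\partial_s=\widehat\alpha_q'+a\partial_t$), together with the further splitting $T\widehat\Sigma_s=T\Sigma_s\oplus\mathrm{Span}\{\partial_r\}$ where $\partial_r=p$ is the radial direction (tangent to $\widehat\Sigma_s$, orthogonal to $\Sigma_s$, and orthogonal to $\partial_t$). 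Since $\widehat\nu=r\nu$ and $\partial_r\nu=0$ while $\partial_r r=1$, we get $\partial_r\widehat\nu=\nu$; the condition $\langle\nabla\widehat\nu,\partial_t\rangle=0$ therefore involves only the derivatives of $\widehat\nu$ along $T\Sigma_s$ and along $\partial_s$, and in each such direction $\widehat\nu$ differs from $\nu$ by the scalar $r$, which is itself killed by those directions on a fixed ray except along $\partial_r$. Unwinding this — cleanest via Lemma \ref{lem-nuindependent-s}, after noting both $\nu$ and $\widehat\nu$ are $s$-independent precisely when one of them is, since the factor $r$ is $s$-independent — shows the two horizontality conditions are equivalent.

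Finally, for the spacelike claim: by the formula $2\theta^2-1=(\nu^2-a^2)/(\nu^2+a^2)$ from the proof of Lemma \ref{lem-constructionhelicoid}, $\Sigma$ is spacelike at $(\alpha_p(s),as)$ iff $|\nu|>a$ there, and $\widehat\Sigma$ is spacelike at $(\widehat\alpha_q(s),as)$ iff $|\widehat\nu|=r|\nu|>a$; since $r$ ranges over all of $(0,+\infty)$, the set $\{r|\nu|>a\}$ is nonempty iff $\nu$ is somewhere nonzero, which (given $\nu$ nonconstant) it is, and more precisely the open spacelike locus $\widehat\Sigma'$ is nonempty exactly when the open spacelike locus $\Sigma'$ is, and both are then zero mean isocurved by Theorem \ref{th-HR=HL}. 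The main obstacle I anticipate is bookkeeping the decomposition $T\widehat\Sigma=T\Sigma_s\oplus\mathrm{Span}\{\partial_r\}\oplus\mathrm{Span}\{\partial_s\}$ carefully enough to see that the radial factor $r$ contributes nothing to $\langle\nabla\widehat\nu,\partial_t\rangle$; everything else is a routine transfer along the cone map.
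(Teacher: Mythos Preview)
Your route is genuinely different from the paper's. The paper argues by asserting that the unit normals $N(x)$ and $\widehat N(\hat x)$ coincide as vectors in $\R^{n+2}$, hence $\Theta(x)=\widehat\Theta(\hat x)$, and then reads off both conclusions from that identity. You instead compare the $\nu$-functions and obtain $\widehat\nu=r\nu$. Your computation is correct --- and in fact it is incompatible with the paper's assertion: feeding $\widehat\nu=r\nu$ into \eqref{eq-thetanu} yields $\widehat\Theta(\hat x)=r\nu/\sqrt{a^{2}+r^{2}\nu^{2}}$, which equals $\Theta(x)=\nu/\sqrt{a^{2}+\nu^{2}}$ only when $r=1$ or $\nu=0$. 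So the paper's shortcut is not right as stated, and your longer path through Lemma~\ref{lem-constructionhelicoid} is the honest one.

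That said, you leave the key step unfinished. You correctly note the splitting $T_{\hat x}\widehat\Sigma=T_{\alpha_p(s)}\Sigma_s\oplus\mathrm{Span}\{\partial_r\}\oplus\mathrm{Span}\{\partial_s\}$ with $\partial_r=(\Gamma_s p,0)$, and that $\partial_r$ is metrically orthogonal to the other two pieces (from $\langle\Gamma_s p,\alpha_p'(s)\rangle=0$ and $\Gamma_s p\perp T_{\alpha_p(s)}\Sigma_s$). But invoking Lemma~\ref{lem-nuindependent-s} only reduces each horizontality condition to a statement of the form $\langle\nabla^{s}(\cdot),\alpha'(\cdot)\rangle=0$; you still have to show these two statements are equivalent. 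The clean way is to observe that, because of the orthogonality of $\partial_r$, the inverse metric $[\widehat g^{IJ}]$ is block-diagonal in the $(u,s)$-block versus the $r$-block, and a short Schur-complement computation shows the $s$-row of that $(u,s)$-block is a scalar multiple of the $s$-row of $[g^{IJ}]$ on $\Sigma$. Since $\partial_{u_i}\widehat\nu=r\,\partial_{u_i}\nu$ and $\partial_s\widehat\nu=r\,\partial_s\nu$, the $\partial_s$-coefficient of $\nabla\widehat\nu$ vanishes iff the $\partial_s$-coefficient of $\nabla\nu$ does. You flagged this as the main obstacle; it is, and it needs to be written out.

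Your spacelike argument does not prove the stated equivalence. You show $\widehat\Sigma$ has a spacelike point iff $r|\nu|>a$ somewhere, which --- because $r$ is unbounded --- holds as soon as $\nu$ is not identically zero; but $\Sigma$ has a spacelike point iff $|\nu|>a$ somewhere, a strictly stronger condition. So the sentence ``more precisely the open spacelike locus $\widehat\Sigma'$ is nonempty exactly when the open spacelike locus $\Sigma'$ is'' is unjustified: only the forward implication (take $r=1$) is valid from your setup. The paper deduces the full ``if and only if'' from $\Theta(x)=\widehat\Theta(\hat x)$, but since that identity fails for $r\neq1$, its argument for this direction has the same defect.
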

\begin{proof}
Setting $x=(\Gamma_s(p), as)\in\Sigma$ and $\hat x =(\Gamma_s(rp), as)\in\widehat\Sigma$,
it is easily checked that  the unit normals
$N(x)\in T\Sigma^\perp$ and $\widehat N(\hat x)\in T\widehat\Sigma^\perp$ coincide (as vectors in $\R^{n+2}$).
Thus, denoting by $\widehat\theta$ the angle function of $\widehat\Sigma,$ we have that
$\theta(x)=\widehat\theta(\hat x)$. Consequently,
$\nabla\theta$ is horizontal on $\Sigma$ if and only if $\nabla\widehat\theta$ is horizontal on $\widehat\Sigma.$
In addition, any horizontal section $\widehat\Sigma_t\subset\R^{n+1}\times\{t\}$ is clearly the cone of
$\Sigma_t\subset\s^n\times\{t\}$ in $\R^{n+1}\times\{t\}.$ In particular,
$\Sigma_t$ is minimal in $\s^n\times\{t\}$  if and only if  $\widehat\Sigma_t$ is minimal in $\R^{n+1}\times\{t\}.$
 Therefore,
$\Sigma$ is a vertical helicoid in $\s^n\times\R$ if and only if
$\widehat\Sigma$ is a vertical helicoid in $\R^{n+1}\times\R$.

The last assertion in the statement  follows from the equality
$\theta(x)=\widehat\theta(\hat x).$
%also gives that
%open spacelike subsets occur in $\Sigma$ if and only if they occur in  $\widehat\Sigma$.
\end{proof}

Now, by means of  Lemmas \ref{lem-constructionhelicoid}--\ref{lem-cone},  we construct
properly embedded vertical helicoids in
$Q_c^n\times\R$ whose horizontal sections project on  totally geodesic hypersurfaces of $Q_c^n.$
First, we handle the Euclidean case $c=0.$
For that, consider the  matrices
\[
J=\left[
          \begin{array}{cc}
            0 & -1 \\
            1 & \phantom{-}0 \\
          \end{array}
        \right] \quad\text{and}\quad
e^{(ks)J}=\left[
          \begin{array}{cc}
            \cos(ks) & -\sin(ks) \\
            \sin(ks) & \phantom{-}\cos(ks) \\
          \end{array}
        \right], \,\, s\in\R,
\]
and, for $k>0,$  define  $\Gamma_s=\Gamma_s(k)$
as the following $n\times n$ block diagonal matrix:

\vt

\begin{itemize}[parsep=2ex]
\item $\Gamma_s=\left[
  \begin{array}{ccccc}
    e^{(ks)J} &  &  & & \\
     &   e^{(ks)J} & & & \\
       &  &  \ddots & &\\
     &  &  &  e^{(ks)J} &\\
     &  &  &  & e^{(ks)J}
  \end{array}
\right]$  ($n$ {even}).
\item
$\Gamma_s=\left[
  \begin{array}{ccccc}
    e^{(ks)J} &  &  & & \\
     &   e^{(ks)J} & & & \\
       &  &  \ddots & & \\
     &  &  &  e^{(ks)J} & \\
     &  &  &   & \,1
  \end{array}
\right]$  ($n$ {odd}).
\end{itemize}

We have that $\mathscr G:=\{\Gamma_s\,;\, s\in\R\}$ is a one-parameter group of isometries of $\R^n.$
So, given $a>0,$ we can choose a totally geodesic hyperplane $\Sigma_0^{n-1}\subset \R^n$ through
the origin  $\boldsymbol 0\in\R^n$ and consider the $a$-pitched twisting
$\Sigma=\Sigma(a,k)$  determined by $\mathscr G$.
In this setting, since $J$ and $e^{(ks)J}$ commute, we  have that
\[\frac{d}{ds}e^{(ks)J}=kJe^{(ks)J}=ke^{(ks)J}J.\]
Hence, for any  $(\Gamma_s(p),as)\in\Sigma,$
\[
\alpha_p'(s):=\frac{d}{ds}\Gamma_s(p)=k\Gamma_s\boldsymbol{J}p,
\]
where
\begin{itemize}[parsep=2ex]
\item $\boldsymbol{J}=\left[
  \begin{array}{ccccc}
    J &  &  &  &\\
     &   J & &  &\\
       &  &  \ddots & &\\
     &  &  &  J & \\
     &  &  &     & J
  \end{array}
\right]$  ($n$ {even}).
\item
$\boldsymbol{J}=\left[
  \begin{array}{ccccc}
   J &  &  & & \\
     &   J & & & \\
       &  &  \ddots & & \\
     &  &  &  J & \\
     &  &  &   & 0
  \end{array}
\right]$  ($n$ {odd}).
\end{itemize}

\vt

Thus,
\begin{equation}\label{eq-nuconstructionhelicoids}
\nu(\alpha_p(s),as)=\langle\alpha_p'(s),\eta_s(\alpha_p(s))\rangle=
k\langle\Gamma_s\boldsymbol{J}p,\Gamma_s\eta(p)\rangle=k\langle\boldsymbol{J}p,\eta(p)\rangle,
\end{equation}
i.e., $\nu$ is nonconstant and independent of $s.$
Also, the  orbits $\alpha_p(s)=\Gamma_s(p)$, $p\in\R^n,$ lie on geodesic spheres of $\R^n$ centered at the origin $\boldsymbol 0.$
Thus, since the hypersurfaces
$\Gamma_s(\Sigma_0)\subset\R^n$ all intersect these  spheres  orthogonally, we have, in particular,
that  \eqref{eq-nuindependent} holds. So, by Lemma \ref{lem-nuindependent-s},  $\nabla\nu$ is horizontal on $\Sigma.$

Now, Lemma \ref{lem-constructionhelicoid} applies
and gives that $\Sigma$ is a properly embedded vertical helicoid in $\R^n\times\R.$ In addition, equality
\eqref{eq-nuconstructionhelicoids} and  the second part of Lemma  \ref{lem-constructionhelicoid}
imply that, for a sufficiently large $k,$ $\Sigma$ contains open spacelike zero mean isocurved subsets.

The above method can be easily adapted for constructing properly embedded vertical helicoids in $\h^n\times\R.$ Indeed,
one has just to consider the standard isometric immersion of $\h^n$ into the Lorentz space $\mathbb{L}^{n+1},$
and then  define the isometries $\Gamma_s$ as

\vt

\begin{itemize}[parsep=2ex]
\item $\Gamma_s=\left[
  \begin{array}{cccccc}
    e^{(ks)J} &  &  & & &\\
     &   e^{(ks)J} & & & &\\
       &  &  \ddots & & & \\
       &  &  & e^{(ks)J} &  \\
     &  &  &  & e^{(ks)J} & \\
     &  &  &  &  & 1
  \end{array}
\right]$  ($n$ {even}).
\item
$\Gamma_s=\left[
  \begin{array}{cccccc}
    e^{(ks)J} &  &  & & &\\
     &   e^{(ks)J} & & & &\\
       &  &  \ddots & & & \\
     &  &  &  e^{(ks)J} & & \\
     &  &  &  & 1\phantom{12} & \\
     &  &  &  &  & 1
  \end{array}
\right]$  ($n$ {odd}).
\end{itemize}
\vt
The rest of the argument is  the same as in the Euclidean case.

For the spherical case $c=1,$  we consider the standard isometric immersion of $\s^n$ into $\R^{n+1}=\R^n\times\R,$
and then define  $\Sigma_0$ as the totally geodesic sphere $\s^n\cap\widehat\Sigma_0,$
where $\widehat\Sigma_0$ is an arbitrary  totally geodesic hyperplane of $\R^{n+1}$ through the origin $\boldsymbol 0$.
For $a,k>0$, the $a$-twisting of $\widehat\Sigma_0$ determined by $\Gamma_s(k)\in{\rm Isom}(\R^{n+1}),$
as described above, is a vertical helicoid in $\R^{n+1}\times\R.$
Since $\widehat\Sigma_0-\{\boldsymbol 0\}$ is the cone of $\R^{n+1}$ over $\Sigma_0\,,$
Lemma \ref{lem-cone} gives that the corresponding
$a$-twisting of $\Sigma_0$ is a properly embedded minimal vertical helicoid in $\s^n\times\R.$

We summarize these considerations in the following

\begin{theorem} \label{th-helicoidsQnXR}
%Given $n\ge 2$ and $c\in\{0,1,-1\},$
There exists a two-parameter family $\{\Sigma(a,k)\,;\, a, k>0\}$ of
properly embedded vertical helicoids %\,$\Sigma^n$
in $Q_c^n\times\R$  whose horizontal sections
are vertical translations of totally geodesic hypersurfaces of $Q_c^n.$
Such a $\Sigma(a,k)$ is  an $a$-pitched twisting of a totally geodesic hypersurface \,$\Sigma_0\subset Q_c^n$
determined by a suitable
one-parameter subgroup $\mathscr G=\{\Gamma_s=\Gamma_s(k)\,;\, s\in\R\}$ of \,${\rm Isom}(Q_c^n)$. Furthermore, for any fixed $a>0,$
the parameter $k$ can be chosen  in such a way that $\Sigma(a,k)$ contains
open spacelike zero mean isocurved subsets.
\end{theorem}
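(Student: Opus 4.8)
The statement is essentially a summary of the explicit constructions carried out in the paragraphs preceding it, so the plan is to assemble those constructions into a single coherent argument, treating the three space forms $Q_c^n$ ($c=0,1,-1$) in turn and in each case exhibiting the one-parameter subgroup $\mathscr G=\{\Gamma_s(k)\}$, the totally geodesic hypersurface $\Sigma_0$, and verifying the hypotheses of Lemma \ref{lem-constructionhelicoid} (together with Lemma \ref{lem-nuindependent-s} and, in the spherical case, Lemma \ref{lem-cone}). First I would fix $a,k>0$. For $c=0$, I take $\Gamma_s=\Gamma_s(k)$ to be the block-diagonal rotation built from the $2\times 2$ blocks $e^{(ks)J}$ (with a trailing $1$ when $n$ is odd), which is patently a one-parameter subgroup of $O(n)={\rm Isom}(\R^n)$ fixing the origin, and $\Sigma_0$ a hyperplane through $\boldsymbol 0$. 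The key computation is that, because $J$ and $e^{(ksJ)}$ commute, $\alpha_p'(s)=k\Gamma_s\boldsymbol{J}p$, whence the $\nu$-function satisfies $\nu(\alpha_p(s),as)=k\langle\boldsymbol J p,\eta(p)\rangle$ by the orthogonal invariance of the inner product; this shows $\nu$ is nonconstant and $s$-independent.

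Next I would check that $\nabla\nu$ is horizontal. The cleanest route is Lemma \ref{lem-nuindependent-s}: since $\nu$ is $s$-independent it suffices to verify \eqref{eq-nuindependent}, i.e. that $\nabla^s\nu$ is orthogonal to the orbit direction $\alpha_p'(s)$. This follows geometrically: the orbits $\alpha_p(s)=\Gamma_s(p)$ lie on geodesic spheres centered at $\boldsymbol 0$, and each $\Gamma_s(\Sigma_0)$ is a hyperplane through $\boldsymbol 0$, hence meets every such sphere orthogonally; thus the level sets of $\nu$ restricted to a fixed slice $\Sigma_s$ are transverse to $\alpha_p'(s)$ in the required way. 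With $\nabla\nu$ horizontal and $\Sigma_0$ minimal (a hyperplane is totally geodesic), Lemma \ref{lem-constructionhelicoid} immediately yields that $\Sigma=\Sigma(a,k)$ is a properly embedded minimal vertical helicoid whose horizontal sections are the vertical translates of $\Gamma_s(\Sigma_0)$, which are totally geodesic hyperplanes of $\R^n$. For the spacelike claim, I invoke the last part of Lemma \ref{lem-constructionhelicoid}: the set $\Sigma'=\{|\nu|>a\}$ is spacelike and zero mean isocurved, and since $\nu(\alpha_p(s),as)=k\langle\boldsymbol J p,\eta(p)\rangle$ scales linearly in $k$, for any fixed $a$ one can choose $k$ large enough that $\Sigma'\ne\varnothing$.

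For $c=-1$ the argument is identical after replacing the ambient $\R^{n+1}$ by the Lorentz space $\mathbb{L}^{n+1}$ and using the block-diagonal isometries of $\h^n$ displayed above (the rotation blocks act on the spacelike directions, with $1$'s on the remaining spacelike and timelike coordinates); the orbits now lie on geodesic spheres of $\h^n$, the totally geodesic $\Sigma_0$ still meets them orthogonally, and the same computation gives a nonconstant $s$-independent $\nu$ with horizontal gradient. For $c=1$ I use Lemma \ref{lem-cone}: I realize $\s^n\subset\R^{n+1}=\R^n\times\R$, let $\widehat\Sigma_0$ be a totally geodesic hyperplane of $\R^{n+1}$ through $\boldsymbol 0$ and $\Sigma_0=\s^n\cap\widehat\Sigma_0$ the corresponding totally geodesic sphere, and apply the Euclidean construction to $\widehat\Sigma_0$ with $\Gamma_s(k)\in O(n+1)={\rm Isom}(\s^n)$. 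The Euclidean case already gives that the $a$-twisting $\widehat\Sigma$ of $\widehat\Sigma_0$ is a vertical helicoid in $\R^{n+1}\times\R$; since $\widehat\Sigma_0\setminus\{\boldsymbol 0\}$ is exactly the cone over $\Sigma_0$, Lemma \ref{lem-cone} transfers this, together with the occurrence of spacelike subsets, to the $a$-twisting $\Sigma$ of $\Sigma_0$ in $\s^n\times\R$.

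\textbf{Main obstacle.} None of the individual steps is deep, so the real work is bookkeeping: making sure the one-parameter subgroups are written correctly in the even/odd $n$ cases and in the Lorentzian model, and — the one genuinely substantive point — justifying cleanly that $\nabla^s\nu\perp\alpha_p'(s)$, i.e. condition \eqref{eq-nuindependent}. The efficient argument is the geometric one given above (orbits on geodesic spheres, $\Gamma_s(\Sigma_0)$ orthogonal to those spheres), but one should be careful that this orthogonality statement is invoked at the level of the hypersurface $\Sigma_s\subset M$ and correctly related to the tangential component $\nabla^s\nu$ of the ambient gradient of $\nu$; alternatively one can verify \eqref{eq-nuindependent} by direct differentiation of $\nu(\alpha_p(s),as)=k\langle\boldsymbol Jp,\eta(p)\rangle$ along $\Sigma_s$, using that this expression does not depend on $s$ and that $\Sigma_0$ is totally geodesic. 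Everything else is an application of the lemmas already established.
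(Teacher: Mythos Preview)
Your proposal is correct and follows essentially the same route as the paper: the same block-diagonal one-parameter subgroups $\Gamma_s(k)$, the same computation $\nu=k\langle\boldsymbol{J}p,\eta(p)\rangle$ showing $\nu$ is nonconstant and $s$-independent, the same geometric appeal (orbits on geodesic spheres, $\Gamma_s(\Sigma_0)$ meeting those spheres orthogonally) to verify \eqref{eq-nuindependent} via Lemma~\ref{lem-nuindependent-s}, and the same passage to $\h^n$ via the Lorentzian model and to $\s^n$ via Lemma~\ref{lem-cone}. Your identification of the orthogonality check \eqref{eq-nuindependent} as the one point requiring care matches exactly where the paper is tersest.
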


Our next result shows that any vertical helicoid in $M\times\R$ with nonvanishing angle function
and totally geodesic horizontal sections is locally a twisting. In particular, Theorem \ref{th-helicoidsQnXR}
admits a local converse.

\begin{theorem} \label{th-localcharacterizationshelicoids}
Let $\Sigma\subset M\times\R$ be a vertical helicoid with nonvanishing angle function. Assume that
each horizontal section $\Sigma_t\subset\Sigma$ is totally geodesic in $M\times\{t\}.$
Then, given $x_0\in\Sigma,$  there exist  a connected open set \,$\Sigma'\owns x_0$ of \,$\Sigma$,
a hypersurface $\mathfrak L_0\subset\pi_{\scriptscriptstyle M}(\Sigma')\subset M,$ and a one-parameter group of isometries
\[
\Gamma_t: \pi_{\scriptscriptstyle M}(\Sigma')\rightarrow\Gamma_t(\pi_{\scriptscriptstyle M}(\Sigma'))\subset M\,,
\,\,\, t\in(-\epsilon,\epsilon),
\]
such that $\Sigma'$ is the  $1$-pitched twisting of $\mathfrak{L}_0$ determined by
$\{\Gamma_t \,;\, t\in(-\epsilon,\epsilon)\},$ that is,
\[
\Sigma'=\{(\Gamma_t(p),t)\in\Sigma \,;\, p\in\mathfrak{L}_0\,, \, t\in(-\epsilon,\epsilon)\}.
\]
\end{theorem}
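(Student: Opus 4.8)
The plan is to locally straighten $\Sigma$ using the height function and the gradient flow of $\xi$, and to recognize the resulting flow on the horizontal sections as a one-parameter family of isometries. First I would fix $x_0\in\Sigma$ and use the hypothesis $\theta\ne 0$ (equivalently $\nabla\xi\ne 0$) to take a small product-type neighborhood: since $\xi$ is a submersion near $x_0$, I can write a neighborhood $\Sigma'$ as $\bigcup_{t\in(-\epsilon,\epsilon)}\Sigma_t'$, where each $\Sigma_t'$ is an open piece of the horizontal section $\Sigma_t$, all diffeomorphic to a fixed $(n-1)$-manifold. The natural way to identify them is to flow along $T=\nabla\xi/\|\nabla\xi\|$; after reparametrizing $s\mapsto t$ so the height increases at unit speed (using $\langle\nabla\xi,\partial_t\rangle=1-\theta^2$, so that $\nabla\xi/(1-\theta^2)$ has height-derivative $1$), the flow lines are exactly the height trajectories, which on a vertical helicoid are vertical helices (Remark \ref{rem-helices}). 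This gives a diffeomorphism $\mathfrak L_0:=\Sigma_0'\to\Sigma_t'$ for each $t$, and hence maps $\phi_t:=\pi_{\scriptscriptstyle M}\circ(\text{flow})\circ(\pi_{\scriptscriptstyle M}|_{\mathfrak L_0})^{-1}$ from $\pi_{\scriptscriptstyle M}(\mathfrak L_0)$ onto $\pi_{\scriptscriptstyle M}(\Sigma_t')\subset M$.

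Next I would show these $\phi_t$ extend to (the restriction of) a one-parameter group $\Gamma_t$ of isometries of $M$. The key is that the flow of $\nabla\xi/(1-\theta^2)$, projected to $M$, generates a Killing field. Two facts should combine to give this. First, each horizontal section $\Sigma_t$ is totally geodesic in $M_t\cong M$, and the height trajectories meet the sections $\Sigma_t$; I would compute the $M$-component $V$ of the (reparametrized) flow field and show $V$ is, along $\Sigma_t$, a normal field to the totally geodesic hypersurface $\Sigma_t$, of a controlled length. Second — and this is where the vertical-helicoid hypotheses genuinely enter — the asymptotic condition $\langle A\nabla\xi,\nabla\xi\rangle=0$ together with the Codazzi equation and total geodesy of the slices should force the $\nu$-type function (the normal component of the orbit speed, compare \eqref{eq-nufunction}) to be constant along each section and $s$-independent, exactly as in Lemma \ref{lem-nuindependent-s}. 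Granting that, I would show the flow preserves the induced metric on the foliation, i.e.\ $\mathcal L_V\langle\,,\,\rangle_{\scriptscriptstyle M}=0$; equivalently, the transition maps $\phi_{t}\circ\phi_{t'}^{-1}$ are isometries between open subsets of $M$, and by the one-parameter-group structure of a flow they assemble into $\{\Gamma_t\}$. Since $\Sigma$ is minimal (Theorem \ref{th-HR=HL}) with totally geodesic slices, one can alternatively invoke rigidity of totally geodesic hypersurfaces: an isometry of $M$ near a totally geodesic piece is determined by where it sends that piece and its normal data, so the flow maps, which preserve both, must be isometries.

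With $\Gamma_t$ in hand, I would finish by checking that $\Sigma'$ is precisely the $1$-pitched twisting of $\mathfrak L_0$ by $\{\Gamma_t\}$: by construction a point of $\Sigma'$ at height $t$ lies over $\Gamma_t(p)$ for the unique $p\in\pi_{\scriptscriptstyle M}(\mathfrak L_0)$ on its flow line, so $\Sigma'=\{(\Gamma_t(p),t):p\in\mathfrak L_0,\ t\in(-\epsilon,\epsilon)\}$ up to identifying $\mathfrak L_0$ with its image, and the pitch is $1$ because we reparametrized the flow to unit height-speed.

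The main obstacle I expect is the isometry claim in the middle paragraph: producing a genuine one-parameter group of \emph{isometries} of $M$ from the projected gradient flow, rather than just a one-parameter group of diffeomorphisms of the foliated neighborhood. The delicate points are (a) verifying that the $M$-component of the reparametrized height flow is a Killing field, for which one must exploit the asymptotic-direction hypothesis and total geodesy of the slices via Codazzi, and (b) ensuring the local isometries between open pieces of $M$ patch to a bona fide one-parameter subgroup of $\mathrm{Isom}(M)$ — this is where shrinking $\Sigma'$ and $\epsilon$, and the group law of a flow, do the bookkeeping. Everything else is the routine local straightening already implicit in Lemmas \ref{lem-constructionhelicoid}--\ref{lem-nuindependent-s}, read backwards.
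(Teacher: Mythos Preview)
Your overall plan coincides with the paper's: flow along $Z=\nabla\xi/\|\nabla\xi\|^2$ (which has unit height-speed), project to $M$ via $\pi_{\scriptscriptstyle M}$ to get $Z_0$ with flow $\Gamma_t$, and show that $\Gamma_t$ is an isometry of the open set $\Omega=\pi_{\scriptscriptstyle M}(\Sigma')$, so that $\Sigma'$ is the $1$-pitched twisting of $\mathfrak L_0=u^{-1}(0)$. The place where your argument is genuinely incomplete is exactly the one you flag, and the paper's resolution is different (and cleaner) from both of your suggestions.

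You propose either to show $V=Z_0$ is Killing via Codazzi, or to invoke ``rigidity of totally geodesic hypersurfaces''. Neither is made precise, and the Codazzi route does not obviously close: the Codazzi equation relates $A$ on $\Sigma$ to the curvature of $M\times\R$, not to the Lie derivative of the metric along $Z_0$ on $M$. The paper instead splits $T\Omega$ into the directions tangent to the level sets $\mathfrak L_t=u^{-1}(t)$ and the direction $Z_0$ normal to them, and treats each piece separately. For the tangential part, since the $\mathfrak L_t$ form a \emph{totally geodesic} foliation of $\Omega$ and $Z_0$ is orthogonal to the leaves, a standard result on Riemannian foliations (\cite[Corollary~6.6]{tondeur}) gives that each $\Gamma_t|_{\mathfrak L_s}:\mathfrak L_s\to\mathfrak L_{s+t}$ is an isometry. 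For the normal part, one does not need Codazzi at all: the helicoid hypothesis enters only through Remark~\ref{rem-helices}, which says $\theta$ (hence $\|\nabla\xi\|$, hence $\|Z\|$) is constant along each flow line of $Z$; since $Z_0=Z-\partial_t$ and $\Gamma_{t*}Z_0=Z_0\circ\Gamma_t$, one gets $\|\Gamma_{t*}Z_0(p)\|^2=\|Z(\varphi_t(x))\|^2-1=\|Z(x)\|^2-1=\|Z_0(p)\|^2$. These two facts together say $\Gamma_t$ preserves the metric in both the leaf and normal directions, hence is an isometry of $\Omega_\epsilon$.

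So your attempt to encode the helicoid condition through a ``$\nu$-type function'' and Lemma~\ref{lem-nuindependent-s} is reading the construction backwards; what you actually need from the asymptotic-direction hypothesis is just the constancy of $\theta$ along $Z$-flow lines, and what you need from total geodesy is the Tondeur-type leafwise isometry, not any Codazzi computation.
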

\begin{proof}
Let  $\varphi_t$ be the flow of the
field $Z=\nabla\xi/\|\nabla\xi\|^2$ on $\Sigma,$ i.e.,
\[
\frac{d\varphi_t}{dt}(x)=Z(\varphi_t(x)) \,\, \forall x\in\Sigma.
\]
Considering that
\[
\frac{d}{dt}\xi(\varphi_t(x))=\left\langle\nabla\xi(\varphi_t(x)),\frac{d\varphi_t(x)}{dt}\right\rangle=1,
\]
we have  $\xi(\varphi_t(x))=t+\xi(x).$ In particular, $\varphi_{t}$ takes
a horizontal section $\Sigma_s$ to $\Sigma_{s+t}$\,.

Since we are assuming $\theta\ne 0,$ we have that \,$\Sigma$
is locally a vertical graph. So, there exists a connected open set
$\Sigma'\owns x_0$ of $\Sigma$ satisfying
$\Sigma'={\rm graph}(u),$ where $u$
is a differentiable function defined on the domain $\Omega=\pi_{\scriptscriptstyle M}(\Sigma')\subset M.$

After a  vertical translation, we can assume  $\Sigma'\cap(M\times\{0\})$
nonempty and  $\pi_{\scriptscriptstyle \R}(\Sigma')=(-2\epsilon, 2\epsilon)$ for some $\epsilon >0.$
In this setting, define the field $Z_0\in T(\Omega)$ as
\[
Z_0(\pi_{\scriptscriptstyle M}(x)):=\pi_{\scriptscriptstyle M_*}Z(x), \,\,\, x\in\Sigma',
\]
and let $\Gamma_t$ be the its flow on $\Omega$, that is,
\[
\Gamma_t(\pi_{\scriptscriptstyle M}(x)):=\pi_{\scriptscriptstyle M}\varphi_t(x), \,\,\, x\in\Sigma'.
\]

Writing $\mathfrak{L}_t:=u^{-1}(t),$
$t\in (-2\epsilon, 2\epsilon),$ one has $\Gamma_t(\mathfrak L_s)=\mathfrak L_{s+t}$ for $|s+t|<2\epsilon.$
(Here, we are identifying $M\times\{0\}$ with $M.$)
Moreover, it follows from \eqref{eq-normaltograph} that $\pi_{\scriptscriptstyle M_*}\nabla\xi$ is parallel to $\nabla u,$
which implies that
$Z_0$ is orthogonal to all level sets $\mathfrak L_t$\,, $t\in (-2\epsilon,2\epsilon ).$

Noticing that the family $\{\mathfrak L_t\,,\, t\in(-2\epsilon,2\epsilon )\}$
defines a totally geodesic foliation of \,$\Omega\subset M,$  we conclude from
\cite[Corollary 6.6]{tondeur} that,
for  $t, s\in(-\epsilon, \epsilon),$
the restriction of $\Gamma_t$ to $\mathfrak{L}_s$ is an isometry over its image
$\Gamma_t(\mathfrak{L}_s)=\mathfrak{L}_{s+t}$\,.   Also, since $\Sigma$ is a vertical helicoid,
we have that $\|\nabla\xi\|$, and so $\|Z\|,$  is constant along the curves $t\mapsto \varphi_t(x), \, x\in\Sigma'$
(see Remark \ref{rem-helices}).
In addition, $Z_0=Z-\langle Z,\partial_t\rangle\partial_t=Z-\partial_t$\,,
and  $\Gamma_{t_*}\circ Z_0=Z_0\circ\Gamma_{t}.$ Thus, for any $p=\pi_{\scriptscriptstyle M}(x),$
$x\in\Sigma',$ we have
\[
\|\Gamma_{t_*}Z_0(p)\|^2 = \|Z_0(\Gamma_t(p))\|^2=\|Z(\varphi_t(x))\|^2-1= {\|Z(x)\|^2-1}=\|Z_0(p)\|^2.
\]

It follows from the above considerations that,  defining $\Omega_\epsilon\subset\Omega$ as the union of all level sets
$\mathfrak{L}_t$ with $t\in(-\epsilon, \epsilon),$ any map
$
p\in\Omega_\epsilon\mapsto \Gamma_t(p), \,\, t\in(-\epsilon,\epsilon),
$
is an isometry  from $\Omega_\epsilon$ to $\Gamma_t(\Omega_\epsilon)\subset\Omega.$
Therefore, if we set,  by abuse of notation,
$\Sigma'=\pi_{\scriptscriptstyle M}^{-1}(\Omega_\epsilon)\cap\Sigma',$ and
$\Omega=\Omega_\epsilon,$
we have that
\[
\Sigma'=\{(\Gamma_t(p),t)\in\Sigma \,;\, p\in\mathfrak{L}_0\,, \, t\in(-\epsilon,\epsilon)\},
\]
as we wished to prove.
\end{proof}

Since $1$-dimensional minimal submanifolds
are totally geodesic, Theorem \ref{th-localcharacterizationshelicoids}  has the following consequence.

\begin{corollary}
Any two-dimensional vertical helicoid $\Sigma^2\subset M^2\times\R$ with nonvanishing angle
function is given, locally, by a twisting of a geodesic of $M.$
\end{corollary}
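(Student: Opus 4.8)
The plan is to observe that the corollary is nothing but the specialization of Theorem \ref{th-localcharacterizationshelicoids} to the case $n=\dim M=2$, once one checks that in this dimension the hypothesis ``totally geodesic horizontal sections'' is automatically satisfied.

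First I would recall that, by Definition \ref{def-helicoid}, the horizontal sections $\Sigma_t=\Sigma\cap(M\times\{t\})$ of the vertical helicoid $\Sigma^2\subset M^2\times\R$ are minimal hypersurfaces of $M\times\{t\}\cong M^2$. Since $\dim M=2$, each $\Sigma_t$ is then a one-dimensional minimal submanifold of $M$, that is, a geodesic. A curve has a single normal direction, so its second fundamental form is determined by the single coefficient $\langle\overbar\nabla_{T}T,\eta\rangle$, where $T$ is a unit tangent and $\eta$ a unit normal; minimality (vanishing of this coefficient) therefore forces the whole second fundamental form of $\Sigma_t$ to vanish. Hence every horizontal section $\Sigma_t$ is totally geodesic in $M\times\{t\}$.

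Next I would simply invoke Theorem \ref{th-localcharacterizationshelicoids}: the hypersurface $\Sigma$ is a vertical helicoid with nonvanishing angle function and, by the previous paragraph, with totally geodesic horizontal sections, so the theorem applies. It yields, for each $x_0\in\Sigma$, a connected open neighborhood $\Sigma'\owns x_0$ which is the $1$-pitched twisting of a hypersurface $\mathfrak{L}_0\subset\pi_{\scriptscriptstyle M}(\Sigma')\subset M$ by a one-parameter group of isometries of $\pi_{\scriptscriptstyle M}(\Sigma')$. Because $\dim M=2$, the set $\mathfrak{L}_0$ is one-dimensional; and, by the construction in the proof of that theorem, $\mathfrak{L}_0$ is (identified with) the level set $u^{-1}(0)$, i.e.\ the horizontal section $\Sigma_0$, which we have just shown to be a geodesic of $M$. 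This establishes the asserted local description of $\Sigma^2$ as a twisting of a geodesic.

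I do not expect any genuine obstacle here: the entire content is the reduction step, and the only point requiring (trivial) care is the remark that for one-dimensional submanifolds minimality is equivalent to being totally geodesic, which is immediate since such a submanifold has only one principal curvature.
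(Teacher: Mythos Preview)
Your proof is correct and follows exactly the paper's approach: the paper simply remarks that one-dimensional minimal submanifolds are totally geodesic and then invokes Theorem \ref{th-localcharacterizationshelicoids}. Your additional observation that $\mathfrak{L}_0$ is a geodesic (being a horizontal section) is a harmless elaboration of what the paper leaves implicit.
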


As a further  application of Lemma \ref{lem-constructionhelicoid},  we now generalize the construction made
in Example \ref{exam-helicoidsS3xR}. Namely, we will obtain  a family of properly embedded vertical
helicoids in the product $\s^{2n+1}\times\R$ by twisting  $2n$-dimensional Clifford tori.

We will adopt the following notation. The identity matrix of order $n+1$ will be denoted by ${\rm Id}$.
We will write $\boldsymbol J$, now, for the $(2n+2)\times(2n+2)$ block matrix
\[
\boldsymbol{J}:=
\left[
\begin{array}{cc}
0 & -{\rm Id}\\
{\rm Id} & \phantom{-}0
\end{array}
\right].
\]
Then, setting $C(t)=(\cos t) {\rm Id}$, and $S(t)=(\sin t) {\rm Id},$ the following identity holds:
\[
e^{t\boldsymbol{J}}=\left[
\begin{array}{cr}
C(t) & -S(t)\\
S(t) & C(t)
\end{array}
\right].
\]
In particular, the derivative of the map $t\in\R\mapsto e^{t\boldsymbol{J}}\in O(2n+2)$ is
\[
\frac{d}{dt}e^{t\boldsymbol{J}}=\boldsymbol{J}e^{t\boldsymbol{J}}.
\]

\begin{theorem} \label{th-twistedcliffordtorus}
Let $\Sigma_0=\s^n(1/\sqrt{2})\times\s^n(1/\sqrt{2})$ be the minimal Clifford torus of the sphere
\,$\s^{2n+1}$. Then, for any $a, k>0,$ the $a$-pitched twisting
\[
\Sigma=\Sigma(a,k):=\{(e^{(ks)\boldsymbol{J}}p,as) \,;\, p\in\Sigma_0, \, s\in\R\}\subset\s^{2n+1}\times\R
\]
is a properly embedded vertical helicoid in $\s^{2n+1}\times\R$. Furthermore, for any fixed $a>0,$
the parameter $k$ can be chosen  in such a way that $\Sigma(a,k)$ contains open spacelike zero mean isocurved subsets.
\end{theorem}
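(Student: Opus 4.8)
The plan is to verify the hypotheses of Lemma~\ref{lem-constructionhelicoid} by means of Lemma~\ref{lem-nuindependent-s}, proceeding just as in the construction that precedes Theorem~\ref{th-helicoidsQnXR} and generalizing Example~\ref{exam-helicoidsS3xR}. First note that $\{e^{(ks)\boldsymbol J}\,;\,s\in\R\}$ is a one-parameter subgroup of $O(2n+2)={\rm Isom}(\s^{2n+1})$, so that $\Sigma$ is indeed the $a$-pitched twisting of $\Sigma_0$ determined by it, and recall that the Clifford torus $\Sigma_0=\s^n(1/\sqrt2)\times\s^n(1/\sqrt2)$ is a compact embedded minimal hypersurface of $\s^{2n+1}$. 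By Lemma~\ref{lem-constructionhelicoid}, it then suffices to show that the $\nu$-function of $\Sigma$ is nonconstant and that its gradient $\nabla\nu$ is horizontal; once this is done, the same lemma yields that $\Sigma$ is a minimal vertical helicoid and also provides the spacelike, zero mean isocurved subsets.

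The hard part is the horizontality of $\nabla\nu$. I would first compute $\nu$ explicitly. Identifying $\R^{2n+2}=\R^{n+1}\times\R^{n+1}$ and writing $p=(p_1,p_2)\in\Sigma_0$, so that $|p_1|=|p_2|=1/\sqrt2$, one has $\boldsymbol Jp=(-p_2,p_1)$, a unit normal of $\Sigma_0$ in $\s^{2n+1}$ is $\eta(p)=(p_1,-p_2)$, and $\frac{d}{ds}e^{(ks)\boldsymbol J}=k\boldsymbol Je^{(ks)\boldsymbol J}=ke^{(ks)\boldsymbol J}\boldsymbol J$. Since $e^{(ks)\boldsymbol J}$ is orthogonal and $\eta_s=\Gamma_{s_{*}}\eta=e^{(ks)\boldsymbol J}\eta$, formula \eqref{eq-nufunction} gives
\[
\nu(\alpha_p(s),as)=\langle\alpha_p'(s),\eta_s(\alpha_p(s))\rangle=k\langle\boldsymbol Jp,\eta(p)\rangle=-2k\langle p_1,p_2\rangle,
\]
which is independent of $s$ (hence $\langle\nabla\nu,\partial_s\rangle=0$) and nonconstant on $\Sigma_0$, since $\langle p_1,p_2\rangle$ ranges over $[-\tfrac12,\tfrac12]$. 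By Lemma~\ref{lem-nuindependent-s}, $\nabla\nu$ is horizontal if and only if $\langle\nabla^s\nu(x),\alpha_p'(s)\rangle=0$ on $\Sigma$; using that each $\Gamma_s$ is an isometry with $\nu\circ\Gamma_s=\nu|_{\Sigma_0}$ and that $\nu$ does not depend on $s$, a short argument based on the decomposition \eqref{eq-decomposition} shows that, at $s=0$, it suffices to verify the single identity $\langle\nabla^{\Sigma_0}\nu_0(p),\boldsymbol Jp\rangle=0$ for all $p\in\Sigma_0$, where $\nu_0(p):=-2k\langle p_1,p_2\rangle$. This I would settle by a direct computation: extending $\nu_0$ to $\R^{2n+2}$, its Euclidean gradient is $-2k(p_2,p_1)$, and projecting it orthogonally onto $T_p\Sigma_0=\{(v_1,v_2)\,;\,v_1\perp p_1,\ v_2\perp p_2\}$ while using $|p_1|^2=|p_2|^2=\tfrac12$ yields $\nabla^{\Sigma_0}\nu_0(p)=-2k\big(p_2-2\langle p_1,p_2\rangle p_1,\ p_1-2\langle p_1,p_2\rangle p_2\big)$; pairing this with $\boldsymbol Jp=(-p_2,p_1)$, the cross terms cancel and one is left with $-2k\big(|p_1|^2-|p_2|^2\big)=0$. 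Hence Lemma~\ref{lem-constructionhelicoid} applies, and $\Sigma$ is a minimal vertical helicoid in $\s^{2n+1}\times\R$.

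It remains to check properness and embeddedness and to produce the spacelike pieces. The map $\Psi(p,s)=(e^{(ks)\boldsymbol J}p,as)$ is injective (the $\R$-coordinate recovers $s$ because $a>0$, after which the first coordinate determines $p$), an immersion (the $\partial_s$-direction has nonzero vertical component $a$, while $e^{(ks)\boldsymbol J}$ embeds each slice $\Sigma_0$), and proper (since $\pi_{\scriptscriptstyle\R}\circ\Psi=as$ is proper in $s$ and $\Sigma_0$ is compact); therefore $\Sigma$ is properly embedded. Finally, by the last part of Lemma~\ref{lem-constructionhelicoid}, the open set $\Sigma'=\{(\alpha_p(s),as)\in\Sigma\,;\,|\nu|>a\}$, when nonempty, is spacelike and zero mean isocurved; since $|\nu_0|=2k|\langle p_1,p_2\rangle|$ attains the value $k$ (take $p_1$ and $p_2$ parallel), for any fixed $a>0$ it suffices to choose $k>a$. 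Apart from the orthogonality identity $\langle\nabla^{\Sigma_0}\nu_0,\boldsymbol Jp\rangle=0$, every step is a direct application of the lemmas already established.
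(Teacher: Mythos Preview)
Your proof is correct and reaches the same conclusion as the paper, but by a genuinely different route. The paper does \emph{not} invoke Lemma~\ref{lem-nuindependent-s}; instead it chooses conformal parametrizations $\varphi,\psi\colon\R^n\to\s^n$ of the two sphere factors, writes down the full first fundamental form $[g_{ij}]$ of $\Sigma$ in the coordinates $(x,y,s)$, computes the relevant cofactors to obtain the last row of $[g^{ij}]$, and then shows that the $\partial_s$-coefficient of $\nabla\nu=\sum g^{ij}(\partial_j\nu)\partial_i$ vanishes by means of the algebraic identities $\sum a_i^2=\lambda k^2$ and $\sum b_i^2=\mu k^2$. Your approach replaces this matrix computation with the coordinate-free identity $\langle\nabla^{\Sigma_0}\nu_0,\boldsymbol Jp\rangle=0$, checked by projecting the Euclidean gradient $-2k(p_2,p_1)$ onto $T_p\Sigma_0$ and using $|p_1|^2=|p_2|^2$. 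This is shorter and more transparent, and it makes the geometric reason for horizontality (the symmetry between the two factors of the Clifford torus) visible.

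One point deserves a word of caution: the quantity $\nabla^s\nu$ appearing in Lemma~\ref{lem-nuindependent-s} is the component of $\nabla\nu$ in $T\Sigma_s$ with respect to the \emph{non-orthogonal} splitting \eqref{eq-decomposition}, and it is not literally the intrinsic gradient $\nabla^{\Sigma_0}\nu_0$. Your ``short argument'' must therefore supply the line showing that $\langle\nabla^s\nu,\alpha_p'\rangle=0$ is equivalent to $\langle\nabla^{\Sigma_t}\nu,\alpha_p'\rangle=0$; this follows, for instance, by writing the orthogonal decomposition $\nabla\nu=\nabla^{\Sigma_t}\nu+\langle\nabla\nu,T\rangle T$ with $T=\nabla\xi/\|\nabla\xi\|=(a\partial_t+\nu\eta_s)/\sqrt{a^2+\nu^2}$ and combining it with $\langle\nabla\nu,\partial_s\rangle=0$, which yields $\langle\nabla\nu,T\rangle\sqrt{a^2+\nu^2}=-\langle\nabla^{\Sigma_t}\nu,\alpha_p'\rangle$. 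With that one line added, your argument is complete and arguably cleaner than the paper's.
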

\begin{proof}
Consider the standard immersion of $\s^{2n+1}\times\R$ into $\R^{2n+2}\times\R$ and define the
following local parametrization of $\Sigma$:
\[
\Psi(x_1\,, \dots ,x_n\,, y_1\,, \dots ,y_n,s)=\left(\frac{1}{\sqrt{2}}\Gamma_s((\varphi(x_1\,, \dots, x_n), \psi(y_1\,, \dots ,y_n)),as\right),
\]
where $\Gamma_s=e^{(ks){\boldsymbol{J}}}$ and $\varphi, \psi\colon\R^n\rightarrow\s^n$ are
conformal parametrizations of \,$\s^n.$

Setting $\varphi_i=\partial\varphi/\partial x_i$ and
$\psi_i=\partial\psi/\partial y_i$, we have that
\[
\frac{\partial\Psi}{\partial x_i}=\frac{1}{\sqrt{2}}(\Gamma_s(\varphi_i,0),0)\quad\text{and}\quad
\frac{\partial\Psi}{\partial y_i}=\frac{1}{\sqrt{2}}(\Gamma_s(0,\psi_i),0), \,\,\, 1\le i\le n.
\]
In particular, $\eta_s=\Gamma_s\eta$ is a unit normal field on
$\Sigma_s=\Gamma_s\Sigma_0\subset\s^{2n+1},$ where
\[
\eta=\frac{1}{\sqrt{2}}(\varphi,-\psi).
\]

Writing $x=(x_1\,, \dots ,x_n)$ and  $y=(y_1\,,\dots ,y_n)$,
we have that the orbit of a point
$p=\frac{1}{\sqrt{2}}(\varphi(x), \psi(y))\in\Sigma_0$ under the action of $\Gamma_s$  is
\[
\alpha_p(s)=\Gamma_s(p)=\frac{1}{\sqrt{2}}\Gamma_s(\varphi(x), \psi(y)).
\]
From $\frac{d\Gamma_s}{ds}=k{\boldsymbol{J}}e^{(ks)\boldsymbol{J}}=k\boldsymbol J\Gamma_s=k\Gamma_s\boldsymbol J$, one has
\begin{equation}\label{eq-alphaplinha}
\alpha_p'(s)=\frac{d}{ds}\Gamma_s(p)=k\Gamma_s\boldsymbol Jp=\frac{k}{\sqrt{2}}\Gamma_s(-\psi(y),\varphi(x)).
\end{equation}
Thus, with the notation of Lemma \ref{lem-constructionhelicoid},
\[
\nu(\alpha_p(s),as)=\langle\alpha_p'(s),\eta_s(\alpha_p(s)\rangle=
k\langle\Gamma_s\boldsymbol{J}p,\Gamma_s\eta(p)\rangle=k\langle\boldsymbol{J}p,\eta(p)\rangle=-k\langle\varphi(x),\psi(y)\rangle,
\]
so that $\nu$ is independent of $s.$ Now, for  $i=1,\dots ,n,$ define
\[
a_i:=\frac{\partial\nu}{\partial x_i}=-k\langle\varphi_i,\psi\rangle \quad\text{and}\quad
b_i:=\frac{\partial\nu}{\partial y_i}=-k\langle\varphi,\psi_i\rangle,
\]
and notice that
\[
\|\psi\|^2=\sum_{i=1}^{n}\frac{\langle\psi,\varphi_i\rangle^2}{\|\varphi_i\|^2} =\frac{1}{k^2}\sum_{i=1}^{n}\frac{a_i^2}{\|\varphi_i\|^2}
\quad \text{and} \quad
\|\varphi\|^2=\sum_{i=1}^{n}\frac{\langle\psi_i,\varphi\rangle^2}{\|\psi_i\|^2}=\frac{1}{ k^2}\sum_{i=1}^{n}\frac{b_i^2}{\|\psi_i\|^2}\,\cdot
\]
Hence, setting $\lambda=\langle\varphi_i,\varphi_i\rangle$ and $\mu=\langle\psi_i,\psi_i\rangle$,
$i=1,\dots ,n,$ (recall that $\varphi$ and $\psi$ are both conformal), we have that
\begin{equation}\label{eq-aibi}
\sum_{i=1}^{n}a_i^2=\lambda k^2 \quad\text{and}\quad \sum_{i=1}^{n}b_i^2=\mu k^2,
\end{equation}
for $\|\varphi\|^2=\|\psi\|^2=1.$

From \eqref{eq-alphaplinha}, we have that  ${\partial\Psi}/{\partial s}=\left(\frac{k}{\sqrt{2}}\Gamma_s(-\psi,\varphi),a\right).$ So,
\[
\left\langle\frac{\partial\Psi}{\partial x_i}, \frac{\partial\Psi}{\partial s}\right\rangle=-\frac{k}{2}\langle\varphi_i,\psi\rangle=\frac{a_i}{2}
\quad\text{and}\quad
\left\langle\frac{\partial\Psi}{\partial y_i}, \frac{\partial\Psi}{\partial s}\right\rangle=\frac{k}{2}\langle\varphi,\psi_i\rangle=-\frac{b_i}{2}\,,
\]
from which we conclude  that the $[g_{ij}]$ matrix of $\Sigma$ with respect to $\Psi$ is
\[
[g_{ij}]=\frac{1}{2}\left[
\begin{array}{ccccccc}
\lambda &  &  & &&&  \phantom{-}a_1   \\
       &\ddots & &&&    &      \phantom{-}\vdots    \\
       &       & \lambda &     & & &   \phantom{-}a_n \\
        &             &         & \phantom{-}\mu &    &    &       -b_1 \\
        &             &         &     & \ddots  &    & \phantom{-}\vdots            \\
        &             &         &     &         &  \phantom{-}\mu  & -b_n     \\
    a_1 & \cdots      & a_n     & -b_1 & \cdots  &  -b_n    &        \phantom{-}k^2+2a^2
\end{array}
\right],
\]
where the non dotted missing entries are all zero.

Computing   the cofactors
of the first $2n$ entries of the last line of $[g_{ij}]$, we
conclude that the first $2n$ entries of the last line of $[g^{ij}]=[g_{ij}]^{-1}$ are
\begin{equation} \label{eq-cofactors}
-\frac{\lambda^{n-1}\mu^{n}}{2^{2n}\mathcal{D}}a_1\,, \dots ,-\frac{\lambda^{n-1}\mu^{n}}{2^{2n}\mathcal{D}}a_n\,,
\frac{\lambda^{n}\mu^{n-1}}{2^{2n}\mathcal{D}}b_1\,, \dots ,\frac{\lambda^{n}\mu^{n-1}}{2^{2n}\mathcal{D}}b_n\,,
\end{equation}
%%%%%%%%%%%%%%%%%%%%%%%%%%%%%%%%%%%%%%%%%%%%%%%%%%%%%%%%%%%%%%%%%%%%%%%%
%%%    correction  %%%%%%%%%%%%%%%%%%%%%%%
%%%%%%%%%%%%%%%%%%%%%%%%%%%%%%%%%%%%%%%%%%%%%%%%%%%%%
%\begin{equation} \label{eq-cofactors}
%-\frac{\lambda^{n-1}\mu^{n}}{2^{n-1}\mathcal{D}}a_1\,, \dots ,-\frac{\lambda^{n-1}\mu^{n}}{2^{n-1}\mathcal{D}}a_n\,,
%\frac{\lambda^{n}\mu^{n-1}}{2^{n-1}\mathcal{D}}b_1\,, \dots ,\frac{\lambda^{n}\mu^{n-1}}{2^{n-1}\mathcal{D}}b_n\,,
%\end{equation}
where $\mathcal{D}=\det[g_{ij}].$
Since the coordinates of $\nabla\nu$ with respect to the frame
\[\mathfrak B:=\left\{\frac{\partial\Psi}{\partial x_1}\,, \cdots ,\frac{\partial\Psi}{\partial x_n}\,,
\frac{\partial\Psi}{\partial y_1}\,, \cdots ,\frac{\partial\Psi}{\partial y_n}, \frac{\partial\Psi}{\partial s}\right\}\subset T\Sigma\]
are the entries of the column  matrix
\[
[g^{ij}]
\left[
\begin{array}{c}
\frac{\partial\nu}{\partial x_1}\\[.5ex]
\vdots\\[.5ex]
\frac{\partial\nu}{\partial x_n}\\[1ex]
\frac{\partial\nu}{\partial y_1}\\[.5ex]
\vdots\\[.5ex]
\frac{\partial\nu}{\partial y_n}\\[1ex]
\frac{\partial\nu}{\partial s}
\end{array}
\right]=
[g^{ij}]
\left[
\begin{array}{c}
a_1\\[.5ex]
\vdots\\[.5ex]
a_n\\[1ex]
b_1\\[.5ex]
\vdots\\[.5ex]
b_n\\[1ex]
0
\end{array}
\right],
\]
it follows from \eqref{eq-aibi} and \eqref{eq-cofactors} that the last coordinate  of $\nabla\nu$
with respect to $\mathfrak B$ is
\[
\frac{1}{2^{2n}\mathcal{D}}\left(-\lambda^{n-1}\mu^{n}\sum_{i=1}^{n}a_i^2+\lambda^{n}\mu^{n-1}\sum_{i=1}^{n}b_i^2 \right)=
\frac{k^2}{2^{2n}\mathcal{D}}(-\lambda^{n}\mu^{n}+\lambda^{n}\mu^{n})=0,
\]
%%%%%%%%%%%%%%%%%%%%%%%%%%%%%%%%%%%%%%%%%%%%%%%%%%%%%%%%%%%%%%%%%%%%%%%%
%%%    correction  %%%%%%%%%%%%%%%%%%%%%%%
%%%%%%%%%%%%%%%%%%%%%%%%%%%%%%%%%%%%%%%%%%%%%%%%%%%%%
%\[
%\frac{1}{2^{n-1}\mathcal{D}}\left(-\lambda^{n-1}\mu^{n}\sum_{i=1}^{n}a_i^2+\lambda^{n}\mu^{n-1}\sum_{i=1}^{n}b_i^2 \right)=
%k^2(-\lambda^{n}\mu^{n}+\lambda^{n}\mu^{n})=0,
%\]
so that $\nabla\nu$ is a horizontal field on $\Sigma.$

Finally, we observe that
\[|\nu(\alpha_p(s),as)|=k|\langle\varphi(x),\psi(y)\rangle| \,\,\forall (\alpha_p(s),as)\in\Sigma.\]
Thus,  given $a>0$, for  a sufficiently large $k>0,$  the open set of points
of $\Sigma$ on which  $|\nu|>a$ is nonempty. The result, then, follows from Lemma \ref{lem-constructionhelicoid}.
\end{proof}

From the above theorem and Lemma \ref{lem-cone}, we have:

\begin{corollary} \label{prop-twistedcone}
Let $\widehat\Sigma_0\subset\R^{2n+2}$ be the cone over the Clifford torus
$\Sigma_0$ of \,$\s^{2n+1}$. Then, for any $a, k>0,$ the $a$-pitched twisting
\begin{equation}\label{eq-twistedcone}
\widehat\Sigma(a,k):=\{(e^{(ks)\boldsymbol{J}}p,as) \,;\, p\in\widehat\Sigma_0, \, s\in\R\}\subset \R^{2n+2}\times\R
\end{equation}
is an embedded vertical helicoid of $\R^{2n+2}\times\R.$
Furthermore, for any fixed $a>0,$ the parameter $k$ can be chosen  in such a way that $\Sigma(a,k)$ contains
open spacelike zero mean isocurved subsets.
\end{corollary}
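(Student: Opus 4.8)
The plan is to obtain the corollary at once from Theorem \ref{th-twistedcliffordtorus} and Lemma \ref{lem-cone}, the latter applied to the sphere $\s^{2n+1}$ in place of $\s^n$. First I would note that $\mathscr{G}:=\{e^{(ks)\boldsymbol{J}}\,;\, s\in\R\}$ is a one-parameter subgroup of the orthogonal group $O(2n+2)={\rm Isom}(\s^{2n+1})$, since $\boldsymbol{J}$ is skew-symmetric; hence $\Sigma(a,k)\subset\s^{2n+1}\times\R$ and $\widehat\Sigma(a,k)\subset\R^{2n+2}\times\R$ are, respectively, the $a$-pitched twistings of the Clifford torus $\Sigma_0$ and of its cone $\widehat\Sigma_0$ determined by $\mathscr{G}$, exactly as in the hypotheses of Lemma \ref{lem-cone}. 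Theorem \ref{th-twistedcliffordtorus} asserts that $\Sigma(a,k)$ is a vertical helicoid in $\s^{2n+1}\times\R$, so the first assertion of Lemma \ref{lem-cone} gives that $\widehat\Sigma(a,k)$ is a vertical helicoid in $\R^{2n+2}\times\R.$

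For the spacelike, zero mean isocurved statement I would again invoke Theorem \ref{th-twistedcliffordtorus}: for a fixed $a>0$ and $k$ large enough, $\Sigma(a,k)$ contains a nonempty open spacelike set. By the last assertion of Lemma \ref{lem-cone} --- the pointwise equality $\theta=\widehat\theta$ of the two angle functions --- the corresponding open subset of $\widehat\Sigma(a,k)$ is spacelike as well; being an open subset of a vertical helicoid, it is itself a vertical helicoid, so Theorem \ref{th-HR=HL} forces it to be zero mean isocurved in $\R^{2n+2}\times\R.$

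The only point not already contained in Lemma \ref{lem-cone} is the embeddedness of $\widehat\Sigma(a,k)$, and that is where I would spend a line of care. Here I would observe that the map $F\colon(q,s)\in\R^{2n+2}\times\R\mapsto(e^{(ks)\boldsymbol{J}}q,as)\in\R^{2n+2}\times\R$ is a diffeomorphism, with inverse $(z,\tau)\mapsto(e^{-(k\tau/a)\boldsymbol{J}}z,\tau/a).$ Since $\Sigma_0$ is embedded in $\s^{2n+1}$ and does not meet the origin, its cone $\widehat\Sigma_0=\{(u,v)\in\R^{n+1}\times\R^{n+1}\,;\,\|u\|=\|v\|>0\}$ is an embedded hypersurface of $\R^{2n+2}\setminus\{\boldsymbol 0\},$ so $\widehat\Sigma_0\times\R$ is embedded in $\R^{2n+2}\times\R$; applying $F$ yields that $\widehat\Sigma(a,k)=F(\widehat\Sigma_0\times\R)$ is embedded. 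It is not closed, as $\boldsymbol 0$ lies in the closure of $\widehat\Sigma_0$, which is why the conclusion is embeddedness rather than proper embeddedness. There is no genuine obstacle in this argument --- it is essentially a transcription of Theorem \ref{th-twistedcliffordtorus} through Lemma \ref{lem-cone}, the single subtlety being the separate embeddedness check just described.
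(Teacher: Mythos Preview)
Your proof is correct and follows exactly the paper's approach: the corollary is deduced immediately from Theorem \ref{th-twistedcliffordtorus} and Lemma \ref{lem-cone}. Your additional line verifying embeddedness via the global diffeomorphism $F$ is a nice touch that the paper leaves implicit.
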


It should be mentioned that, through a method different from ours,
Choe and Hoppe \cite{choe-hoppe} showed that the twisted cones
in the above corollary are  minimal hypersurfaces of $\R^{2n+3}$. (We are grateful to Alma Albujer for
let us know about this work.)
A distinguished property of these $a$-twisted cones   is that,
for sufficiently large $a>0,$ they constitute nodal sets of the solutions of the Allen-Cahn differential equation
(see \cite{delpino-musso-pacard}).

\section{Hypersurfaces with a Canonical Direction}  \label{sec-canonicaldirection}

With the aim of introducing and studying vertical catenoids in $M\times\R,$
we proceed now to the characterization of  hypersurfaces of $M\times\R$ which have
$\nabla\xi$ as a principal direction.
Our approach  will be based on the work of R. Tojeiro \cite{tojeiro}, who considered the case
where $M$ is a constant sectional curvature space form $Q_c^n.$

We start with an arbitrary isometric immersion
\[f:\Sigma_0^{n-1}\rightarrow M^n,\]
assuming that there is a neighborhood $\mathscr{U}$
of $\Sigma_0$ in $T\Sigma_0^\perp$  without focal points of $f,$ that is,
the restriction of the normal exponential map $\exp^\perp_{\Sigma_0}:T\Sigma_0^\perp\rightarrow M$ to
$\mathscr{U}$ is a diffeomorphism onto its image. In this case, denoting by
$\eta$ the unit normal field  of $f,$   there is an open interval $I\owns 0$
such that, for all $p\in\Sigma_0,$
\[\gamma_p(s)=\exp_{\scriptscriptstyle M}(f(p),s\eta(p)), \, s\in I,\]
is a well defined geodesic of $M$ without conjugate points. In particular,
for all $s\in I,$
\[
\begin{array}{cccc}
f_s: & \Sigma_0 & \rightarrow & M\\
     &  p       & \mapsto     & \gamma_p(s)
\end{array}
\]
is an immersion of $\Sigma_0$ into $M,$ which is said to be \emph{parallel} to $f.$
Observe that, given $p\in\Sigma_0$, the tangent space $f_{s_*}(T_p\Sigma_0)$ of $f_s$ at $p$ is the parallel transport of $f_{*}(T_p\Sigma_0)$ along
$\gamma_p$ from $0$ to $s.$ Also, with the induced metric,
the unit normal  $\eta_s$  of $f_s$ at $p$ is $\eta_s(p)=\gamma_p'(s).$

Now, define in $M\times\R$ the hypersurface
\begin{equation}\label{eq-paralleldescription}
\Sigma:=\{(f_s(p),a(s))\in M\times\R\,;\, p\in\Sigma_0, \, s\in I\},
\end{equation}
where $a:I\rightarrow a(I)\subset\R$ is an increasing diffeomorphism, i.e., $a'>0.$
We call $\Sigma$ an $(f_s,a)$-\emph{graph} of $M\times\R.$

For any point $x=(f_s(p),a(s))\in\Sigma,$ one has
\[T_x\Sigma=f_{s_*}(T_p\Sigma_0)\oplus {\rm Span}\,\{\partial_s\}, \,\,\, \partial_s=\eta_s+a'(s)\partial_t.\]
A unit normal  to $\Sigma$ is
\[
N=\frac{-a'}{\sqrt{1+(a')^2}}\eta_s+\frac{1}{\sqrt{1+(a')^2}}\partial_t\,.
\]
In particular, its  angle function  is
\begin{equation} \label{eq-thetaparallel}
\theta=\frac{1}{\sqrt{1+(a')^2}}\,\cdot
\end{equation}

\begin{theorem} \label{th-parallel}
If \,$\Sigma$ is an $(f_s,a)$-graph in $M\times\R,$
the following hold:
\begin{itemize}[parsep=1ex]
  \item[{\rm i)}] $\theta$ and $\nabla\xi$ never vanish on \,$\Sigma.$
  \item[{\rm ii)}] $\nabla\xi$ is a principal direction of \,$\Sigma.$
  \item[{\rm iii)}] $\theta$ and the principal curvature of \,$\Sigma$ in
the direction $\nabla\xi$ are constant along the horizontal sections $\Sigma_t$ of \,$\Sigma.$
\end{itemize}
Conversely, if \,$\Sigma\subset M\times\R$
is a hypersurface with nonvanishing angle function which
has $\nabla\xi$ as a principal direction,  then $\Sigma$
is locally an $(f_s,a)$-graph.
\end{theorem}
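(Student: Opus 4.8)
The plan is to prove the two directions separately, with the converse being the substantive part. For the forward direction, assume $\Sigma$ is an $(f_s,a)$-graph. Part (i) is immediate from \eqref{eq-thetaparallel}: since $a'>0$, one has $\theta=1/\sqrt{1+(a')^2}\in(0,1)$, so $\theta^2\neq 1$ and hence $\nabla\xi=\partial_t-\theta N$ never vanishes. For part (ii), I would compute the shape operator $A$ on the splitting $T_x\Sigma=f_{s_*}(T_p\Sigma_0)\oplus{\rm Span}\{\partial_s\}$. The key observation is that $\nabla\xi$ is a (nonzero) multiple of the horizontal part of $\partial_s$, i.e.\ of $\eta_s$, equivalently $\nabla\xi$ and $\partial_s$ span the same line once one subtracts the appropriate tangential correction; more precisely, from $\nabla\xi=\partial_t-\theta N$ and the formula for $N$ one checks $\nabla\xi=\frac{1}{1+(a')^2}\,\partial_s$ (up to sign), so showing $\nabla\xi$ is principal amounts to showing $A\partial_s$ has no component along $f_{s_*}(T_p\Sigma_0)$. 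To see this, differentiate $N$ along a curve tangent to $f_{s_*}(T_p\Sigma_0)$ at fixed $s$: since $\eta_s(p)=\gamma_p'(s)$ is the parallel transport of $\eta(p)$ and the $f_s$ are parallel immersions, the derivative of $\eta_s$ in horizontal directions stays within $f_{s_*}(T_p\Sigma_0)$ (this is exactly the statement that parallel hypersurfaces have the same principal directions / that the second fundamental form of $f_s$ relates to that of $f$ by the Jacobi/Weingarten equation), and $\partial_t$ is parallel; hence $\langle A X,\partial_s\rangle = -\langle \overbar\nabla_X N,\partial_s\rangle=0$ for $X$ horizontal. By symmetry of $A$, this gives $A\partial_s\in{\rm Span}\{\partial_s\}$, proving (ii). For (iii), the angle function depends only on $a'(s)$, hence only on $s$, hence is constant on each $\Sigma_t=\{s=\text{const}\}$; and the principal curvature in the direction $\partial_s$ is $\langle A\partial_s,\partial_s\rangle/\|\partial_s\|^2$, which a direct computation (using that $\|\partial_s\|^2=1+(a')^2$ and that $\langle A\partial_s,\partial_s\rangle=-\langle\overbar\nabla_{\partial_s}N,\partial_s\rangle$ reduces, via $\overbar\nabla_{\partial_s}\eta_s$ being the geodesic acceleration, which vanishes, plus the $a''$ term) shows to be a function of $s$ alone.

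For the converse, suppose $\Sigma\subset M\times\R$ has $\theta\neq 0$ everywhere and $\nabla\xi$ is a principal direction. Since $\theta\neq 0$, locally $\Sigma={\rm graph}(u)$ over an open set $\Omega\subset M$. I would then introduce the unit vector $T=\nabla\xi/\|\nabla\xi\|$ and consider its integral curves (height trajectories). The principal-direction hypothesis $A T\parallel T$ combined with $\nabla\theta=-A\nabla\xi$ (from \eqref{eq-defgradxi}) gives $\nabla\theta\parallel\nabla\xi\parallel\nabla u$ as horizontal-projected fields — so on $\Omega$ the function $\|\nabla u\|$ (equivalently $\theta$ via \eqref{eq-thetaandu}) has gradient parallel to $\nabla u$. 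The strategy is to reconstruct the $(f_s,a)$-graph data: take $\Sigma_0$ to be (the $M$-projection of) a horizontal section of $\Sigma$, let $\eta$ be the unit normal of $\Sigma_0$ in $M$ obtained from \eqref{eq-horizontalnormal}, parametrize nearby points of $\Omega$ by $\exp^\perp_{\Sigma_0}$, and verify that the level sets of $u$ are exactly the parallel hypersurfaces $f_s(\Sigma_0)$. The crucial claim to establish is that the normal geodesics $\gamma_p(s)=\exp_{\scriptscriptstyle M}(p,s\eta(p))$ stay orthogonal to all level sets of $u$ — i.e.\ that $\nabla u$ (hence $T$) is tangent to these geodesics. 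This is where the principal-direction condition is used essentially: it forces the height trajectories to project onto geodesics of $M$. Concretely, writing the height trajectory $\gamma(r)$ on $\Sigma$ with $\gamma'=T$, one shows $\overbar\nabla_TT$ has no horizontal component transverse to $\nabla u$ and no unexpected vertical behavior, using $\langle A T,X\rangle=0$ for $X\perp T$ together with the Gauss formula $\overbar\nabla_TT=\nabla_TT+\langle AT,T\rangle N$; projecting to $M$ and tracking the $\partial_t$-component (which is controlled since $\langle T,\partial_t\rangle=\|\nabla\xi\|$ is, by the principal condition and $\nabla\theta=-AT\|\nabla\xi\|$, constant along $T$) yields that $\pi_{\scriptscriptstyle M}\circ\gamma$ is a geodesic. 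Once the level sets of $u$ are identified with a parallel family $f_s(\Sigma_0)$, writing $t=\xi$ as a function $a(s)$ of the signed geodesic distance $s$ (monotone because $\nabla\xi\neq 0$) exhibits $\Sigma$ as an $(f_s,a)$-graph; one also checks $a'>0$ can be arranged after possibly reversing orientation of $\eta$.

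I expect the main obstacle to be the converse, and within it the proof that the principal-direction hypothesis actually forces the $M$-projections of height trajectories to be geodesics and that the level sets of $u$ form a genuine parallel family — i.e.\ that the normal exponential map from one level set sweeps out the others with no focal points in the relevant range. The algebraic identity $AT=\lambda T$ must be converted into a differential-geometric statement about the ambient geodesics of $M$, which requires carefully separating the tangential and normal parts of $\overbar\nabla$ and using that $\partial_t$ is parallel in $M\times\R$ (so its tangential part along $\Sigma$ is $\nabla\xi$, with covariant derivative governed by $A$). The other computations — the forward direction and the final re-parametrization $t=a(s)$ — are routine once this geometric picture is in place. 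I would also take care to state the converse as a local result on a neighborhood small enough that $\exp^\perp_{\Sigma_0}$ is a diffeomorphism, matching the focal-point-free hypothesis in the direct construction.
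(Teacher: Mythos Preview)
Your forward direction is correct. For (ii) you take the ``dual'' route: instead of computing $A\partial_s$ directly, you show $\langle AX,\partial_s\rangle=0$ for horizontal $X$ and invoke symmetry of $A$. This works, but note that the paper's computation is shorter and yields the eigenvalue for free: since $s\mapsto\gamma_p(s)$ is a geodesic of $M$, one has $\overbar\nabla_{\partial_s}\eta_s=0$, whence $\overbar\nabla_{\partial_s}N=\frac{\theta'}{\theta}N-\theta a''\eta_s$, and one reads off both the principal direction and $\lambda=a''\theta^3$ at once.

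For the converse your strategy is right, and you correctly identify the crux (the $M$-projections of height trajectories must be geodesics), but your execution has a concrete error. You claim that $\langle T,\partial_t\rangle=\|\nabla\xi\|$ is constant along $T$. It is not: from $\nabla\theta=-A\nabla\xi=-\lambda\|\nabla\xi\|T$ one gets $T(\theta)=-\lambda\|\nabla\xi\|$, hence $T(\|\nabla\xi\|)=\theta\lambda$, which is generically nonzero (and is never zero on a vertical catenoid, where $\lambda\ne 0$ by definition). So the argument you sketch for the geodesic property via $\overbar\nabla_TT$ does not go through as stated.

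The paper handles this differently in two respects. First, it flows by $Z=\nabla\xi/\|\nabla\xi\|^2$ rather than by $T$; this normalization makes $\xi(\varphi_t(x))=t+\xi(x)$, so $\varphi_t$ carries horizontal sections to horizontal sections, and the height parameter is built in from the start. Second, for the geodesic step it does not compute on $\Sigma$ at all: from the principal-direction hypothesis one gets (as you also note) that $\|\nabla u\|$ is constant on each level set of $u$, and then a purely $M$-intrinsic lemma (Tojeiro, Lemma~1) says that this alone forces the normalized trajectories of $\nabla u$ to be geodesics of $M$. This is the clean replacement for your $\overbar\nabla_TT$ computation. Finally, the paper addresses a point you gloss over: why is $t=a(s)$ a function of $s$ alone and not of the base point $p\in\Sigma_0$? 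This uses that $\|\nabla\xi\|$ depends only on $t$ (constancy on horizontal sections), so the arclength $s=L_p(t)$ of the projected curve, computed via $\|\gamma_p'\|=\sqrt{1-\|\nabla\xi\|^2}/\|\nabla\xi\|$, is independent of $p$; your sentence ``writing $t=\xi$ as a function $a(s)$'' needs exactly this to be well-posed.
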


\begin{proof}
Assume that $\Sigma$ is an $(f_s,a)$-graph of $M\times\R.$
Then, by \eqref{eq-thetaparallel}, $\theta\ne 0$ and $\theta^2\ne 1.$ In particular,
$\nabla\xi$ never vanishes on $\Sigma.$

Since, for any $p\in\Sigma_0$, $\gamma_p$ is a geodesic
of $M$ (and so of $M\times\R$), and $\eta_s=\gamma_p'(s),$  we  have $\overbar\nabla_{\partial_s}\eta_s=0.$
Then, noticing  that
$N=\theta(-a'\eta_s+\partial_t),$ one has
\[
\overbar\nabla_{\partial_s}N=\overbar\nabla_{\partial_s}\theta(-a'\eta_s+\partial_t)=
\frac{\theta'}{\theta\phantom{'}}N-\theta(a''\eta_s+a'\overbar\nabla_{\partial_s}{\eta_s})=
\frac{\theta'}{\theta\phantom{'}}N-\theta a''\eta_s\,.
\]
Hence, for all $X\in\{\partial_s\}^\perp\subset T\Sigma,$ we have that
$\langle \overbar\nabla_{\partial_s}N,X\rangle =0,$
which implies that $\partial_s$ is a principal direction of $\Sigma.$
In addition, one has
\[\langle A\partial_s,\partial_s\rangle=-\langle\overbar\nabla_{\partial_s}N,\partial_s\rangle=a''\theta.\]
So, the corresponding eigenvalue of $A$ is
\[\lambda:=a''\theta^3=\frac{a''}{\sqrt{(1+(a')^2)^3}}\]
(for $\|\partial_s\|^2=1+(a')^2=1/\theta^2$), which  gives that $\lambda$
is a function of $s$ alone, and so
it is constant along the horizontal sections of $\Sigma.$ By \eqref{eq-thetaparallel}, the same is true
for $\theta.$

Finally, observing that
$\nabla\xi=\partial_t-\theta N=a'\theta^2\partial_s,$
we conclude that $\nabla\xi$ is also a principal direction of $\Sigma$
with principal curvature  $\lambda=a''\theta^3,$ i.e.,
\begin{equation}\label{eq-lambda}
A\nabla\xi=(a''\theta^3)\nabla\xi.
\end{equation}
This proves the first part of the theorem.

Conversely,  let us suppose that $\Sigma\subset M\times\R$ is a hypersurface  which
has $\nabla\xi$ as a principal direction and whose angle function $\theta$ never vanishes.
Then, $\Sigma$ is (locally) a graph of a differentiable
function $u$ defined on a domain $\Omega\subset M.$
(By abuse of notation, we keep denoting this local graph by $\Sigma$.)

As we have seen in Section \ref{sec-graphs}, in this setting, %the outward unit normal to $\Sigma={\rm graph}(u)\subset M\times\R$  is
\begin{equation}\label{eq-thetaandu}
\theta=\frac{1}{\sqrt{1+\|\nabla u\|^2}}\,, %  \quad\text{and}\quad  \theta_{\ssl}=\frac{-1}{\sqrt{1-\|\nabla u\|^2}}\,\cdot
\end{equation}
where,  % $\nabla$ and $\|\,\|$ stand for gradient and norm on $M,$ and,
as before, we are writing $\nabla u$ instead of $\nabla u\circ\pi_{\scriptscriptstyle M}.$

Considering the flow $\varphi_t$ of $\nabla\xi/\|\nabla\xi\|^2$ on $\Sigma,$ and
possibly restricting the domain $\Omega,$
we can assume  that
the  horizontal  sections $\Sigma_t\subset\Sigma$
are all connected and homeomorphic to a certain Riemannian manifold $\Sigma_0.$
In other words, there exists an open interval $I_0\owns 0$ such that the map  %  immersion
$G:\Sigma_0\times I_0\rightarrow \Sigma\subset M\times\R$ given by
\[
G(p,t)=\varphi_t(p)
\]
is a well defined immersion satisfying $G(\Sigma_0\times\{t\})=\Sigma_t$\,.
%where $I_0\owns 0$ is an open interval. %(see  \cite[Theorem 3.1]{milnor}).

Define  the map $f_t:\Sigma_0\rightarrow M$ by
\[f_t=\pi_{\scriptscriptstyle M}G(\cdot , t), \,\, t\in I_0,\]
and observe that each $f_t$ is an immersion whose %unit normal $\eta_t$ at $p\in\Sigma_0$ with respect to the induced metric
image $f_t(\Sigma_0)$ is a level set of $u.$ In particular, $\nabla u$ is orthogonal to
$f_t$ with respect to the induced metric. Furthermore, since $\nabla\xi$ is a principal direction and
$\nabla\theta=-A\nabla\xi,$ we have that $\theta$ is constant along the horizontal sections $\Sigma_t$
(so, the same is true for $\|\nabla\xi\|,$ since $\|\nabla\xi\|^2+\theta^2=1$).
This, together with  \eqref{eq-thetaandu}, gives that, for each $t\in I_0,$  $\|\nabla u\|$ is  constant
on the level set $f_t(\Sigma_0).$ Consequently, the (normalized) trajectories of $\nabla u$ are geodesics
of $M$ (see \cite[Lemma 1]{tojeiro}).

For a fixed $p\in\Sigma_0,$ let us denote by $\varphi_t'(p)$ the velocity vector of the trajectory
$t\in I_0\mapsto\varphi_t(p)\in\Sigma$
at $t,$  that is,
\[\varphi_t'(p)=\frac{\nabla\xi\phantom{^2}}{\|\nabla\xi\|^2}(\varphi_t(p)).\]
In particular, the curve $\gamma_p(t):=\pi_{\scriptscriptstyle M}\circ\varphi_t(p)$
is tangent to $\nabla u$ and, by the above considerations,
is a geodesic of $M$ (when reparametrized by arclength).
Also, from
\[
\|\varphi_t'(p)\|=\frac{1}{\|\nabla\xi(\varphi_t(p))\|} \quad\text{and}\quad \langle\varphi_t'(p),\partial_t\rangle=1,
\]
we have
$
\gamma_p'=\varphi_t'(p)-\langle\varphi_t'(p),\partial_t\rangle\partial_t=\varphi_t'(p)-\partial_t\,,
$
which yields
\begin{equation}\label{eq-final}
\|\gamma_p'\|=\frac{\sqrt{1-\|\nabla\xi\|^2}}{\|\nabla\xi\|}\cdot
\end{equation}

Let $s=L_p(t)\in I\subset\R$ be the arclength parameter of $\gamma_p$ from an arbitrary point $t_0\in I_0.$
Since $\|\nabla\xi\|$ is a function of $t$ alone, it follows from \eqref{eq-final}
that the same is true for $L_p(t).$
Hence, the function  $a=L_p^{-1}:I\rightarrow I_0$ depends only on $s$ and satisfies $a'>0.$
Writing, by abuse of notation, $\gamma_p=\gamma_p\circ a$, and $f_s=f_{a(s)}$\,, one has that
each $\gamma_p$ is an arclength geodesic of $M,$ so that the immersions $f_s$ are parallel and
$\Sigma$ is the corresponding $(f_s,a)$-graph.  This finishes the proof.
\end{proof}

We get from Theorem \ref{th-parallel} the following result,
which classifies the hypersurfaces of $M\times\R$ whose angle function is constant.
For $M=Q_c^n$, this was done in \cite{manfio-tojeiro, tojeiro}. %The proof is the essentially the same.

\begin{corollary}
Let $\Sigma$ be a connected hypersurface of $M\times\R$. Then, if the  angle function $\theta$
of \,$\Sigma$ is constant, one of the following holds:
\begin{itemize}[parsep=1ex]
\item[{\rm i)}] $\Sigma$ is an open set of $M\times\{t\},\, t\in\R$.
\item[{\rm ii)}] $\Sigma$ is an open set of a  vertical cylinder over a hypersurface of $M.$
\item[{\rm iii)}] $\Sigma$ is  locally an $(f_s,a)$-graph  with $a'$ constant.
\end{itemize}
Conversely, if one of these possibilities occur, then $\theta$ is constant.
\end{corollary}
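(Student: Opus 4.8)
The plan is to prove the forward implication by a case analysis on the behaviour of $\theta$, relying on the dichotomy $\theta^2\equiv 1$ versus $\theta^2<1$ somewhere, and then to obtain the converse by direct verification. First I would record the two structural identities from \eqref{eq-defgradxi}, namely $\nabla\xi=\partial_t-\theta N$ and $\nabla\theta=-A\nabla\xi$. If $\theta$ is constant then $\nabla\theta\equiv 0$, so $A\nabla\xi\equiv 0$; moreover $\|\nabla\xi\|^2=1-\theta^2$ is constant. The natural split is whether this constant is zero or not.

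If $\theta^2\equiv 1$, then $\nabla\xi\equiv 0$, so the height function $\xi=\pi_{\scriptscriptstyle\R}|_\Sigma$ is locally constant; since $\Sigma$ is connected, $\xi$ is constant, and $\Sigma$ is an open subset of some slice $M\times\{t\}$, giving case (i). If instead $\theta^2<1$ (a constant strictly between $-1$ and $1$ in absolute value, possibly $0$), then $\nabla\xi$ is a nowhere-vanishing field on $\Sigma$, and from $A\nabla\xi\equiv 0$ it is a principal direction (with principal curvature $0$). Here I would subdivide further according to whether $\theta=0$ or $0<\theta^2<1$. If $\theta\equiv 0$, then $N=-\nabla\xi/\theta$ is not available, but directly $N\perp\partial_t$ everywhere, so $N$ is horizontal; this forces $\Sigma$ to be invariant under vertical translations, i.e.\ an open subset of a vertical cylinder $\Sigma_0\times\R$ over a hypersurface $\Sigma_0\subset M$ — case (ii). (One argues that the integral curves of $\partial_t$ stay in $\Sigma$ because $\partial_t$ is tangent to $\Sigma$ when $\theta=0$.) If $0<\theta^2<1$, then $\theta$ never vanishes, so Theorem~\ref{th-parallel} applies: $\Sigma$ is locally an $(f_s,a)$-graph. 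Since along such a graph $\theta=1/\sqrt{1+(a')^2}$ by \eqref{eq-thetaparallel}, constancy of $\theta$ forces $a'$ to be constant — case (iii).

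The converse is a routine check in each case: for (i), a slice has $N=\pm\partial_t$, so $\theta=\pm1$ is constant; for (ii), a vertical cylinder has horizontal normal, so $\theta\equiv 0$; for (iii), $a'$ constant gives $\theta=1/\sqrt{1+(a')^2}$ constant by \eqref{eq-thetaparallel}. I would state these in one or two lines each.

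The main obstacle I anticipate is the $\theta\equiv 0$ case: one must argue cleanly that a hypersurface whose unit normal is everywhere horizontal is locally a vertical cylinder. The cleanest route is to observe that $\partial_t$, being tangent to $\Sigma$ and parallel in $M\times\R$, restricts to a parallel (hence Killing, geodesic) unit field on $\Sigma$ whose flow consists of vertical translations; its flow therefore preserves $\Sigma$ locally, and the quotient (a local slice transversal to $\partial_t$) is the hypersurface $\Sigma_0\subset M$ with $\Sigma=\Sigma_0\times\R$ locally. Care is needed only to phrase this as a genuinely local statement, since globally $\Sigma$ need not be a product; but the corollary asks only for an open subset of a vertical cylinder, so a local flow-box argument suffices. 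A secondary point to handle is making sure the three cases are exhaustive and, where they overlap (e.g.\ a slice is trivially also a degenerate ``$(f_s,a)$-graph''), that this causes no logical gap — this is harmless since the statement is ``one of the following holds.''
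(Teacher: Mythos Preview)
Your proposal is correct and follows essentially the same route as the paper: case-split on $\theta^2=1$, $\theta=0$, and $0<\theta^2<1$, use $A\nabla\xi=-\nabla\theta=0$ together with Theorem~\ref{th-parallel} in the last case, and read off $a'$ constant from \eqref{eq-thetaparallel}. The paper dispatches the first two cases in a single line (``Clearly, (i) occurs if $\theta^2=1$, and (ii) occurs if $\theta=0$''), so your extra justification of the vertical-cylinder case is more careful than what the paper actually writes, but not a different argument.
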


\begin{proof}
Suppose that $\theta$ is constant on $\Sigma.$
Clearly, (i) occurs if
$\theta^2=1$, and (ii) occurs if $\theta=0.$
Otherwise, $\nabla\xi\ne 0$ and $\theta\ne 0.$ Since,
$A\nabla\xi=-\nabla\theta=0,$  it follows that
$\nabla\xi$ is a principal direction of $\Sigma.$ Hence, by Theorem \ref{th-parallel},
$\Sigma$ is locally an $(f_s,a)$-graph and, by \eqref{eq-thetaparallel}, $a'$ is constant.

The converse is immediate in cases (i) and (ii). The case (iii) follows directly from
equality \eqref{eq-thetaparallel}.
\end{proof}

An important class of hypersurfaces of $Q_c^n\times\R$
having $\nabla\xi$ as a principal direction are the
\emph{rotation hypersurfaces}, which are those obtained by
the  rotation of a plane curve about an axis
$\{o\}\times\R,$  $o\in Q_c^n.$
Clearly, any horizontal section $\Sigma_t$  of a rotational hypersurface
$\Sigma\subset Q_c^n\times\R$ is contained in a  geodesic sphere with center at $(o,t)\in Q_c^n\times\R.$
Considering this property, we introduce the following notion of
rotational hypersurface in $M\times\R$.

\begin{definition} \label{def-rotationalhypersurface}
A hypersurface  $\Sigma\subset M\times\R$ is called \emph{rotational}, if there exists
a fixed point $o\in M$ such that
any horizontal section $\Sigma_t$  is contained in a  geodesic sphere with center
at $(o,t)\in M^n\times\R.$ If so, we call $\{o\}\times\R$ the \emph{axis} of $\Sigma.$
\end{definition}

\begin{remark} \label{rem-rotational}
Let  $\Sigma\subset M\times\R$ be a rotational hypersurface with no horizontal points and
nonvanishing $\theta.$ Since concentric geodesic spheres constitute a parallel family $\{f_s\}$ of
hypersurfaces of $M,$  under these hypotheses, $\Sigma$ is locally an $(f_s,a)$-graph. Hence,  by
Theorem \ref{th-parallel}, \emph{$\nabla\xi$ is a principal direction of any such rotational $\Sigma.$}
\end{remark}

We introduce  now a special type of family of parallel hypersurfaces which will play a fundamental
role in the sequel.

\begin{definition} \label{def-isoparametric}
We call a family of parallel hypersurfaces $f_s:\Sigma_0\rightarrow M,$ $s\in I,$ \emph{isoparametric} if
$f_s$ has constant mean curvature $H_s$ (depending on $s$) for all $s\in I.$ If so, each  hypersurface
$f_s$ is also called \emph{isoparametric}.
\end{definition}

%It follows from the above theorem that vertical catenoids exist in $M\times\R$ as long as
%$M$ admits (non minimal) isoparametric families of hypersurfaces. This occurs when
\begin{example} \label{exam-spaceforms}
It is well known that any totally umbilical hypersurface of  $Q_c^n$  is
isoparametric (see, e.g., \cite{vazquez}).
\end{example}

\begin{example} \label{exam-damekricci}
There are certain Hadamard-Einstein manifolds,
known as \emph{Damek-Ricci spaces}, which have many families
of isoparametric hypersurfaces, including its
geodesic spheres. More specifically,  geodesic spheres (of any radius)
in symmetric Damek–Ricci spaces are  isoparametric
with constant principal curvatures, whereas
geodesic spheres (of small radius) in non-symmetric Damek–Ricci spaces are
isoparametric  with nonconstant principal curvatures. The symmetric Damek-Ricci spaces are completely
classified. They are the hyperbolic space $\h^n,$ the complex hyperbolic space  $\C\h^n,$ the quaternionic
hyperbolic space, and the octonionic hyperbolic plane
(see \cite[Section 6]{vazquez} and the references therein for an account of Damek-Ricci spaces).
\end{example}

\begin{example} \label{exam-Ekapatau}
Let
$\mathbb{E}(k,\uptau)$, $k-4\uptau^2\ne 0,$  be one of the simply
connected  3-ho\-mo\-ge\-ne\-ous manifolds with isometry group of dimension 4:
The products $\h^2\times\R$ and $\s^2\times\R$ ($\uptau=0$), the Heisenberg space ${\rm Nil}_3$ ($k=0, \uptau\ne 0$),
the Berger spheres ($k>0, \uptau\ne 0$), or the universal cover
of the special linear group
${\rm SL}_2(\R)$ %with some special left-invariant metrics
($k<0,\uptau\ne 0$).
In \cite{vazquez-manzano}, the authors classified all isoparametric hypersurfaces of these spaces,
showing, in particular, that  none of them is spherical.
%Therefore, {for any \,$\mathbb{E}(k,\uptau)$-space, $k-4\uptau^2\ne 0,$ there exist
%vertical catenoids in $\mathbb{E}(k,\uptau)\times\R$, and none of them is rotational.}
\end{example}

In our next result, we show that there exist  minimal or  constant mean curvature $(f_s,a)$-graphs
in $M\times\R$ if and only if $M$ has isoparametric hypersurfaces.

\begin{theorem} \label{th-parallelcmc}
Let $\Sigma\subset M\times\R$ be an $(f_s,a)$-graph, $s\in I\subset\R,$
such that \,$f_s$ is isoparametric with constant
mean curvature $H_s$\,. Assume that, for a given constant
$H\in\R,$  the diffeomorphism  $a:I\rightarrow a(I)\subset\R$ is defined by the equality
\begin{equation}\label{eq-parallelcmc}
a(s)=\int_{s_0}^{s}\frac{\varrho(\tau)}{\sqrt{1-\varrho(\tau)^2}}d\tau, \,\, s_0\in I,
\end{equation}
where $y=\varrho(s)$ is a solution of the linear differential equation of first order
\begin{equation}\label{eq-diffequation}
y'=H_sy+H
\end{equation}
satisfying  $0<\varrho(s)<1.$ %(possibly in a subinterval of $I\subset\R$).
Under these conditions, $\Sigma$ has constant mean curvature $H.$
Conversely, if $\Sigma$ has constant mean curvature $H,$ then
$f_s$ is isoparametric and the function $a(s)$ is necessarily given by \eqref{eq-parallelcmc}
with $\varrho=a'\theta.$
\end{theorem}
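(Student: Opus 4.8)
The plan is to compute the (non-normalized) mean curvature $H_\Sigma$ of an $(f_s,a)$-graph directly from the structure already established in Theorem \ref{th-parallel}, and then recognize the ODE \eqref{eq-diffequation} as exactly the condition $H_\Sigma\equiv H$. First I would use the orthogonal splitting $T_x\Sigma=f_{s_*}(T_p\Sigma_0)\oplus\mathrm{Span}\{\partial_s\}$ and the fact, proved in Theorem \ref{th-parallel}, that $\partial_s$ (equivalently $\nabla\xi$) is a principal direction with principal curvature $\lambda=a''\theta^3$. So $H_\Sigma$ is $\lambda$ plus the sum of the principal curvatures along the directions tangent to the parallel hypersurface $f_s$. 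The key observation is that, since the unit normal of $\Sigma$ restricted to $T f_{s_*}(T_p\Sigma_0)$ is (up to sign and the factor $-a'\theta$) just $\eta_s=\gamma_p'(s)$, the shape operator of $\Sigma$ acting on vectors tangent to the horizontal section is $\theta$ times the shape operator $A_{\eta_s}$ of the parallel immersion $f_s$ in $M$. Hence the horizontal part of $H_\Sigma$ equals $\theta\, H_s$, where $H_s$ is the (non-normalized) mean curvature of $f_s$; this is where the isoparametric hypothesis enters, guaranteeing $H_s$ depends only on $s$.

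Putting these together, I expect to obtain a formula of the shape
\[
H_\Sigma=\theta H_s+a''\theta^3=\theta H_s+\theta\frac{d}{ds}(a'\theta^2)\cdot\frac{1}{\phantom{x}}\cdots
\]
— more precisely, I would rewrite everything in terms of $\varrho:=a'\theta$. From \eqref{eq-thetaparallel} one has $\theta=(1+(a')^2)^{-1/2}$, so $a'=\varrho/\sqrt{1-\varrho^2}$ (which is exactly the integrand in \eqref{eq-parallelcmc}, confirming consistency of the definition of $a(s)$), $\theta=\sqrt{1-\varrho^2}$, and a direct computation gives $a''\theta^3=\varrho'$. Therefore the mean curvature condition $H_\Sigma=H$ becomes
\[
\sqrt{1-\varrho^2}\,H_s+\varrho'=H,
\]
i.e. $\varrho'=H-\sqrt{1-\varrho^2}\,H_s$. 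I would then reconcile the sign conventions (the normal $N$ points with $+\partial_t$, so the natural orientation of $f_s$ used for $H_s$ is $-\eta_s$, flipping a sign) so that this reads precisely $\varrho'=H_s\varrho+H$ after the appropriate identification — matching \eqref{eq-diffequation}. Conversely, assuming $H_\Sigma\equiv H$, the same identity forces $\sqrt{1-\varrho^2}\,H_s$ to be a function of $s$ alone (since $\varrho$ and $\varrho'$ are, by Theorem \ref{th-parallel}-(iii), functions of $s$ alone), hence $H_s$ is constant on each horizontal section, i.e. $f_s$ is isoparametric, and $\varrho=a'\theta$ solves \eqref{eq-diffequation}; integrating $a'=\varrho/\sqrt{1-\varrho^2}$ recovers \eqref{eq-parallelcmc}.

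The main obstacle I anticipate is bookkeeping the sign and normalization conventions correctly: which unit normal orients $f_s$ versus $\Sigma$, whether "mean curvature" is the trace or the normalized trace, and how the factor $\phi$-type corrections (cf. Lemma \ref{lem-horizontalsection}) compare with the $\theta$ appearing here. A secondary subtlety is justifying that the horizontal shape operator of $\Sigma$ really is $\theta A_{\eta_s}$: this needs that $M\times\{a(s)\}$-directions and the geodesic variation $\gamma_p$ interact cleanly, which follows because $\eta_s=\gamma_p'(s)$ is parallel along $\gamma_p$ and $f_{s_*}(T_p\Sigma_0)$ is the parallel transport of $f_{*}(T_p\Sigma_0)$, so $\overbar\nabla_X\eta_s$ for $X$ tangent to $f_s$ is, up to the horizontal/vertical decomposition, exactly $A_{\eta_s}X$. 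Once the conventions are pinned down, the computation of $a''\theta^3=\varrho'$ and the passage to the ODE are short and mechanical, and the converse direction is essentially reading the same identity backwards using part (iii) of Theorem \ref{th-parallel}.
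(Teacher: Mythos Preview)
Your overall strategy is exactly the paper's: split $H_\Sigma$ into the contribution from the horizontal section plus the principal curvature $\lambda=a''\theta^3$ in the $\nabla\xi$ direction, then recognize the resulting identity as the ODE \eqref{eq-diffequation}. The converse argument you sketch is also the paper's. However, there is a concrete computational error, not merely a sign issue, in the step where you compute the horizontal contribution.

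You correctly note that $N=-a'\theta\,\eta_s+\theta\,\partial_t$, but then assert that the shape operator of $\Sigma$ on horizontal vectors is $\theta A_{\eta_s}$. In fact, for $X$ tangent to $f_s$ one has $X(a'\theta)=X(\theta)=0$ (these are functions of $s$ alone) and $\overbar\nabla_X\partial_t=0$, so
\[
AX=-\overbar\nabla_XN=a'\theta\,\overbar\nabla_X\eta_s=-a'\theta\,A_{\eta_s}X.
\]
Hence the horizontal part of $H_\Sigma$ is $-a'\theta\,H_s=-\varrho H_s$, not $\theta H_s=\sqrt{1-\varrho^2}\,H_s$. With the correct factor you get
\[
H_\Sigma=-\varrho H_s+a''\theta^3=-\varrho H_s+\varrho',
\]
which \emph{is} the equation $\varrho'=H_s\varrho+H_\Sigma$ with no further ``sign reconciliation'' needed. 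Your proposed ODE $\varrho'=H-\sqrt{1-\varrho^2}\,H_s$ cannot be turned into $\varrho'=H_s\varrho+H$ by any choice of orientation, since the discrepancy is $\sqrt{1-\varrho^2}$ versus $\varrho$, not a sign. The paper avoids this pitfall by invoking Lemma~\ref{lem-horizontalsection} directly: equation \eqref{eq-meancurvatureslices} gives $H_s=\phi(H_\Sigma-\lambda)$ with $\phi=-(a'\theta)^{-1}$, from which $H_\Sigma=-(a'\theta)H_s+a''\theta^3$ follows immediately. Once you fix this factor, your computation $(a'\theta)'=a''\theta^3$ and the rest of your argument go through verbatim.
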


\begin{proof}
Let us denote the mean curvature of $\Sigma$ by $H_\Sigma$.
By equalities \eqref{eq-meancurvatureslices} and \eqref{eq-lambda}, we get
$H_{s}=\phi(H_\Sigma-\lambda),$ where
\[
\phi=-(1-\theta^2)^{-1/2}=-(a'\theta)^{-1} \quad\text{and}\quad \lambda=a''\theta^3.
\]
So, we have
$
H_\Sigma=-(a'\theta)H_s+a''\theta^3.
$
However, by  \eqref{eq-thetaparallel}, one has
$(a'\theta)'=a''\theta^3.$ Therefore,
if we set $\zeta=a'\theta,$ we get
\begin{equation}\label{eq-cmc}
\zeta'=H_s\zeta+H_\Sigma \,\,\, \forall s\in I.
\end{equation}

A direct computation gives that $0<\zeta^2=(a')^2/(1+(a')^2)<1,$ and also that
\begin{equation}\label{eq-cmc1}
a'=\frac{\zeta}{\theta}=\frac{\zeta}{\sqrt{1-\zeta^2}}\,\cdot
\end{equation}

Thus, if $f_s$ is isoparametric and
the function $a(s)$ is defined by \eqref{eq-parallelcmc} (with $\varrho$ satisfying \eqref{eq-diffequation}),
it follows  by %\eqref{eq-cmc} and
\eqref{eq-cmc1} that $\zeta=\varrho.$ Then, comparing  \eqref{eq-diffequation} and \eqref{eq-cmc},
we conclude that $\Sigma$ has constant mean curvature $H.$

Conversely, if $\Sigma$ has constant mean curvature $H_\Sigma=H\in\R,$ it follows from
\eqref{eq-cmc} that $f_s$ is isoparametric and, by \eqref{eq-cmc1}, that $a(s)$ is given by
equality \eqref{eq-parallelcmc} with $\varrho=\zeta=a'\theta.$
\end{proof}

\section{Vertical Catenoids in $M\times\R.$} \label{sec-catenoids}

In this section,
we   introduce the minimal hypersurfaces of $M\times\R$
which resemble the standard catenoids of $\R^3$
with respect to some of its fundamental properties. The definition is as follows.

\begin{definition}
We say that a hypersurface $\Sigma$ of $M\times\R$ with no horizontal points and nonconstant angle
function is a \emph{vertical catenoid} if the following conditions are satisfied:
\begin{itemize}[parsep=1ex]
  \item[i)] $\nabla\xi$ is a principal direction of $\Sigma$ with principal curvature $\lambda\ne 0.$
  \item[ii)] Any horizontal section $\Sigma_t\subset\Sigma$ has nonzero constant mean curvature
  (i.e., depending only on $t$) given by
  \[H_{\Sigma_t}=\frac{\lambda}{\sqrt{1-\theta^2}}\,\cdot\]
\end{itemize}
\end{definition}

Regarding  condition (ii) in the above definition, notice that, from Theorem \ref{th-parallel},
for any $\Sigma$ satisfying condition (i), the function ${\lambda}/{\sqrt{1-\theta^2}}$ is
constant along the horizontal sections $\Sigma_t.$ It should also be noticed that a
vertical catenoid, as defined, is not necessarily rotational (see Definition \ref{def-rotationalhypersurface}).
At the end of this section we construct non rotational properly embedded vertical catenoids in $M\times\R$, where
$M$ is a   Hadamard manifold (see Theorems \ref{th-H-horospheretype} and \ref{th-Ftype}).

The result below follows directly from Lemma \ref{lem-horizontalsection}
and Remark \ref{rem-rotational}. It establishes the minimality of catenoids as hypersurfaces of $M\times\R$,
and also the uniqueness of rotational vertical catenoids as minimal rotational hypersurfaces of $M\times\R$.
The latter is a  well known property of the standard catenoids of Euclidean space $\R^3.$

\begin{proposition}
The following assertions on a hypersurface $\Sigma\subset M\times\R$
with no horizontal points and nonconstant angle function hold:
\begin{itemize}[parsep=1ex]
  \item If $\Sigma$ is a vertical catenoid, then $\Sigma$ is minimal.
  \item If $\Sigma$ is rotational and minimal, then $\Sigma$ is a vertical catenoid.
\end{itemize}
\end{proposition}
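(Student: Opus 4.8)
The plan is to deduce both assertions from the identity \eqref{eq-meancurvatureslices} of Lemma \ref{lem-horizontalsection}, which relates the mean curvature $H$ of $\Sigma$, the mean curvature $H_{\Sigma_t}$ of a horizontal section, and the term $\langle AT,T\rangle$ where $T=\nabla\xi/\|\nabla\xi\|$. The crucial observation is that both hypotheses (vertical catenoid; rotational and minimal) force $\nabla\xi$ to be a principal direction, so that $\langle AT,T\rangle = \lambda$ is precisely the principal curvature in the direction $\nabla\xi$, and $\phi = -(1-\theta^2)^{-1/2}$.

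For the first bullet, I would start from the definition: $\Sigma$ being a vertical catenoid gives directly that $\nabla\xi$ is a principal direction with principal curvature $\lambda$, so $\langle AT,T\rangle = \lambda$, and condition (ii) gives $H_{\Sigma_t} = \lambda/\sqrt{1-\theta^2} = -\phi\lambda$. Substituting both facts into \eqref{eq-meancurvatureslices},
\[
-\phi\lambda = H_{\Sigma_t} = \phi\bigl(H - \langle AT,T\rangle\bigr) = \phi(H-\lambda),
\]
so $-\lambda = H - \lambda$, i.e.\ $H=0$ on $\Sigma$. Hence $\Sigma$ is minimal.

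For the second bullet, suppose $\Sigma$ is rotational and minimal. By Remark \ref{rem-rotational}, since $\Sigma$ has no horizontal points and nonvanishing $\theta$, it is locally an $(f_s,a)$-graph over a parallel family of concentric geodesic spheres, and by Theorem \ref{th-parallel} $\nabla\xi$ is a principal direction, say with principal curvature $\lambda = \langle AT,T\rangle$. Plugging $H=0$ into \eqref{eq-meancurvatureslices} yields $H_{\Sigma_t} = -\phi\lambda = \lambda/\sqrt{1-\theta^2}$, which is exactly condition (ii) in the definition of vertical catenoid. It remains to check condition (i), namely that $\lambda \neq 0$: if $\lambda \equiv 0$ on an open set, then $\langle AT,T\rangle = 0$ there, and combined with $H=0$ and \eqref{eq-meancurvatureslices} the horizontal sections $\Sigma_t$ would be minimal; but a horizontal section of a rotational hypersurface lies in a geodesic sphere of $M$, and one argues (using that a round geodesic sphere in $M$ has nonzero mean curvature, at least for small radius, or more directly that $\Sigma$ would then be totally geodesic with the horizontal sections being totally geodesic spheres, forcing $\theta$ constant) that this contradicts the standing hypothesis that $\theta$ is nonconstant. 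So $\lambda$ is not identically zero, and on the open dense set where $\lambda\neq 0$ the hypersurface is a vertical catenoid in the stated sense.

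The main obstacle I anticipate is the last step — ruling out $\lambda\equiv 0$ and thereby verifying the \emph{nonzero}-curvature part of condition (i). The clean way is to observe that if $\lambda\equiv 0$ then \eqref{eq-lambda} in Theorem \ref{th-parallel} gives $a''\theta^3 = 0$, hence $a''=0$, hence $a'$ is constant, hence by \eqref{eq-thetaparallel} $\theta$ is constant along $\Sigma$ — contradicting the hypothesis that the angle function of a vertical catenoid (or of the $\Sigma$ under consideration) is nonconstant. This is the argument I would write out; everything else is a direct substitution into \eqref{eq-meancurvatureslices}.
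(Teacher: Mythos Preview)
Your proposal is correct and follows exactly the route the paper indicates: the paper's entire proof is the sentence ``The result below follows directly from Lemma~\ref{lem-horizontalsection} and Remark~\ref{rem-rotational},'' and you have unpacked precisely those two ingredients. Your additional verification that $\lambda\not\equiv 0$ via the formula $\lambda=a''\theta^3$ from Theorem~\ref{th-parallel} (so $\lambda\equiv 0\Rightarrow a'$ constant $\Rightarrow\theta$ constant) is a detail the paper does not spell out, and it is the right way to close the argument; note only that your invocation of Remark~\ref{rem-rotational} tacitly uses $\theta\ne 0$, which is not part of the stated hypotheses---you should either restrict to the open set where $\theta\ne 0$ or add that assumption explicitly, since the paper is equally silent on this point.
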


It follows from Theorems \ref{th-parallel} and \ref{th-parallelcmc} that,
as long as $M$ contains isoparametric hypersurfaces,
there exist  vertical catenoids in $M\times\R$ which are $(f_s,a)$-graphs.
This applies, for instance, to all manifolds $M$ described in Examples \ref{exam-spaceforms}--\ref{exam-Ekapatau}.
In what follows,  we use this fact to construct {properly embedded} vertical catenoids by
``gluing'' pieces of such graphs.

First, recall that $M$ is said to be a \emph{Hadamard manifold} if it is complete, simply connected, and
has non positive sectional curvature. Any Hadamard manifold $M^n$ is  diffeomorphic to $\R^n$ through  the exponential map, so
that, for a given point $o\in M$, and $r>0,$ the geodesic sphere $S_r(o)$ with center at $o$ and radius $r$ is well defined.
We will write $B_r(o)$ for the geodesic ball of $M$ with center at $o\in M$ and radius $r>0$,  and
$\overbar{B_r(o)}$ for its closure in $M.$

\begin{theorem} \label{th-completecatenoids}
Let $M^n$ be a Hadamard manifold whose geodesic spheres are all
isoparametric. Then, there exists a one-parameter family of properly
embedded rotational catenoids in the product $M\times\R$  which are all
homeomorphic to  $\s^{n-1}\times\R$ and
symmetric with respect to  the  horizontal section
$M\times\{0\}\subset M\times\R.$
\end{theorem}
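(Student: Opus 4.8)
The plan is to build the rotational catenoids in $M\times\R$ by solving the ODE of Theorem \ref{th-parallelcmc} with $H=0$ on a half-line of radii, analyze the behavior of the resulting profile $a(s)$ at the two ends of its interval of definition, and then glue two symmetric copies across $M\times\{0\}$. Concretely, fix the point $o\in M$, let $f_s=S_s(o)$ be the geodesic sphere of radius $s>0$, which by hypothesis is isoparametric with some constant mean curvature $H_s$ (with respect to the outward normal $\eta$, so $\gamma_p(s)$ runs outward). Theorem \ref{th-parallelcmc} with $H_\Sigma=0$ reduces the minimality equation to the linear first-order ODE $\zeta'=H_s\zeta$ for $\zeta=a'\theta\in(0,1)$, whose solution is $\zeta(s)=\zeta(s_0)\exp\!\big(\int_{s_0}^s H_\tau\,d\tau\big)$. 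For each admissible initial value we then recover $a$ by $a(s)=\int_{s_0}^s \zeta/\sqrt{1-\zeta^2}\,d\tau$, and this is the one-parameter family (parametrized by the "neck radius" $s_0$, i.e.\ the value of $s$ at which we require $a'\to\infty$, equivalently $\theta\to 0$, equivalently $\zeta\to 1$).

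\textbf{Key steps, in order.} First I would record that as $s\to 0^+$ the mean curvature $H_s$ of a small geodesic sphere in an $n$-manifold behaves like $(n-1)/s+O(s)$, so that $\int_{s_0}^s H_\tau\,d\tau\to-\infty$ as $s\to 0^+$, hence $\zeta(s)\to 0$ and $a'(s)=\zeta/\sqrt{1-\zeta^2}\to 0$; moreover $\zeta(s)$ is comparable to $s^{\,n-1}$ near $0$, so $a'(s)\sim c\,s^{n-1}$ is integrable and $a$ extends continuously (indeed $C^\infty$) to $s=0$, where the "sphere" $f_0$ degenerates to the single point $o$ and the surface closes up smoothly — this is the standard removable-singularity/smooth-closure argument for rotational ends, and it shows the graph limits onto $(o,a(0))$. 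Second, I would choose $s_0$ to be the unique radius at which $\zeta(s_0)=1$: since $H_s\le 0$ is impossible to guarantee a priori, I instead run the argument backwards — pick the constant in $\zeta(s)=C\exp(\int H)$ so that $\zeta\to 1$ at the right endpoint $s_0$ of the maximal interval on which $0<\zeta<1$; there $\theta\to 0$, $a'\to\infty$, so $a'$ is no longer a graph, but $a(s)\to a(s_0)$ is finite because $\zeta/\sqrt{1-\zeta^2}$ has an integrable singularity of type $(s_0-s)^{-1/2}$ (using $\zeta'(s_0)=H_{s_0}\cdot 1$ finite, so $1-\zeta\sim c(s_0-s)$). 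At $s=s_0$ the surface has a horizontal tangent hyperplane along the whole sphere $S_{s_0}(o)\times\{a(s_0)\}$; after a vertical translation put $a(s_0)=0$. Third, reflect the piece $\{(f_s(p),a(s)): 0\le s\le s_0\}$ across $M\times\{0\}$ via $(q,t)\mapsto(q,-t)$; the two pieces meet along $S_{s_0}(o)\times\{0\}$ with matching (horizontal) tangent spaces, and one checks $C^1$-matching there — which combined with minimality (an elliptic equation) upgrades to $C^\infty$ by standard removable-singularity/unique-continuation results for minimal graphs, or more elementarily by writing the union near the waist as a graph over a neighborhood of $S_{s_0}(o)$ in $M$ and verifying smoothness of the profile. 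Fourth, embeddedness and properness: the map $s\mapsto a(s)$ is strictly monotone on $(0,s_0)$ (since $a'>0$ there), the geodesic spheres $S_s(o)$ are pairwise disjoint and foliate $B_{s_0}(o)\setminus\{o\}$, so the whole surface is an embedded graph over $\overline{B_{s_0}(o)}$ on each side of the waist, hence embedded; properness follows because the surface is closed in $M\times\R$ — it is the boundary of a closed region, and any divergent sequence on it has divergent $\R$-coordinate or leaves every compact set of $M\times\{0\}$. Finally, topologically the surface is two copies of $\overline{B_{s_0}(o)}\setminus\{o\}\cong \s^{n-1}\times[0,s_0)$ glued along $\s^{n-1}\times\{0\}$, i.e.\ $\s^{n-1}\times\R$, and by construction it is symmetric under $t\mapsto -t$; varying $s_0$ (equivalently the waist radius) gives the asserted one-parameter family.

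\textbf{Main obstacle.} The delicate point is the regularity and closure of the surface at the two degenerate loci: smooth closure at the axis point $s=0$ and $C^\infty$ matching of the two sheets across the waist $s=s_0$. Both hinge on precise asymptotics of the solution $\zeta$ of $\zeta'=H_s\zeta$, which in turn requires the asymptotics $H_s\sim(n-1)/s$ as $s\to0^+$ (giving $\zeta\sim c s^{n-1}$, hence integrability of $a'$ and, after the coordinate change to a graph over $M$, smoothness of the profile in the squared radial variable) and the finiteness of $H_{s_0}$ (giving the $(s_0-s)^{-1/2}$ singularity of $a'$, hence a finite but vertical-tangent waist, which after reflecting is a smooth minimal graph by removable singularities). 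One must also confirm that the maximal interval of existence for $0<\zeta<1$ actually reaches down to $s=0$ — this is automatic because $\zeta(s)=C\exp(\int_{s_0}^s H_\tau d\tau)$ is defined and positive for all $s\in(0,s_0]$ and tends to $0$ at $0$, so it stays in $(0,1)$ throughout. Everything else — the ODE solution, monotonicity, embeddedness, properness, and the homeomorphism type — is routine once these endpoint analyses are in place.
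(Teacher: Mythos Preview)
There is a genuine error: you build the half-catenoid over the \emph{interior} $B_{s_0}(o)$ (radii $s\in(0,s_0)$), whereas the correct construction runs over the \emph{exterior} $M\setminus\overbar{B_r(o)}$ (radii $s\in(r,+\infty)$). The source is a sign mistake in $H_s$: with the outward normal on a geodesic sphere in a Hadamard manifold the shape operator is negative definite, so $H_s<0$ (indeed $H_s\sim -(n-1)/s$ as $s\to 0^+$, not $+(n-1)/s$). With the correct sign the solution $\zeta(s)=\exp\bigl(\int_r^s H_\tau\,d\tau\bigr)$ of $\zeta'=H_s\zeta$ is \emph{decreasing}, so the admissible region $0<\zeta<1$ is $s>r$, not $s<s_0$. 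The paper's half-catenoid $\Sigma_r'$ is therefore an unbounded graph over the exterior of the waist-sphere; gluing two such halves along $S_r(o)$ gives $\s^{n-1}\times\R$ as stated.

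Your inward construction leads to a contradiction that you in fact record yourself: you first argue that the surface closes up smoothly at the axis point $(o,a(0))$, but then compute the topology as though that point were removed. If the axis point is included, each half is a closed $n$-disk and the glued surface is homeomorphic to $\s^n$, not $\s^{n-1}\times\R$; moreover a compact minimal hypersurface in $M\times\R$ is impossible, since the height function $\xi$ satisfies $\Delta_\Sigma\xi=\theta H=0$ on any minimal $\Sigma$ and would hence be constant. If the axis point is excluded, the surface is not closed in $M\times\R$ and so not properly embedded. The fix is simply to run the construction outward from the waist; then the only convergence issue is the $(s-r)^{-1/2}$ singularity of $a'$ at $s=r$, which you already know how to handle, and there is no endpoint at the axis to worry about.
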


\begin{proof}
Fix $o\in M$ and choose $r>0.$  For each $s\in (r,+\infty),$
let
\[f_s:\s^{n-1}\rightarrow M^n\simeq M^n\times\{0\}\subset M\times\R\]
be the geodesic sphere of $M$
with center at $o\in M$ and radius $s>r.$
Since $M$ is a Hadamard manifold, each immersion $f_s$ is  convex and non totally geodesic.
Hence, taking the ``outward'' unit normal $\eta_s$ of $f_s$\,, we have that the
(constant) mean curvature $H_s$ of $f_s$ is negative. In particular, setting
\[
\rho(s):=\exp\left(\int_r^sH_u du\right), \, s\in (r,+\infty),
\]
we have that $\rho$ is a solution of $y'=H_sy$ which satisfies
$0<\rho(s)<1$  for all $s>r.$

Now, with the purpose of applying Theorem \ref{th-parallelcmc}, we define the function
\[
a(s):=\int_{r}^{s}\frac{\rho(u)}{\sqrt{1-\rho^2(u)}}du,  \,\,  s\in (r,+\infty).
\]
The integral on the right is improper, for
$\rho(s)\rightarrow 1$ as $s\rightarrow r.$ So, we have to prove that $a$ is well defined, i.e.,
that this integral is convergent. For that, notice that $\rho'(s)\rightarrow H_r<0$ as
$s\rightarrow r.$ In particular, there exist $\delta, C>0$ such that
\[
\rho'(s)>-C \,\, \forall s\in (r, r+\delta).
\]
This, and the fact that $\rho$ is decreasing and satisfies $0<\rho(s)<1$ for $s>r,$ gives
\begin{eqnarray}
\int_{r}^{r+\delta}\frac{\rho(\tau)}{\sqrt{1-\rho^2(\tau)}}d\tau & = &\int_{r}^{r+\delta}\frac{\rho'(u)\rho(u)d\tau}{\rho'(u)\sqrt{1-\rho^2(\tau)}}
\le\frac{1}{C}\int_{\rho(r+\delta)}^{\rho(r)}\frac{d\rho}{\sqrt{1-\rho^2}}\nonumber\\
                                                                 & = & \frac{1}{C}(\arcsin(\rho(r))-\arcsin(\rho(r+\delta)))\le\frac{\pi}{2C}\,, \nonumber
\end{eqnarray}
which implies that the function $a$ is well defined, and that $a(s)\rightarrow 0$ as $s\rightarrow r.$
From this and   Theorems \ref{th-parallel} and \ref{th-parallelcmc}, we conclude that
the $(f_s,a)$-graph, which we denote by $\Sigma_r'$, is
a rotational vertical catenoid.
\begin{figure}[htbp]
\includegraphics{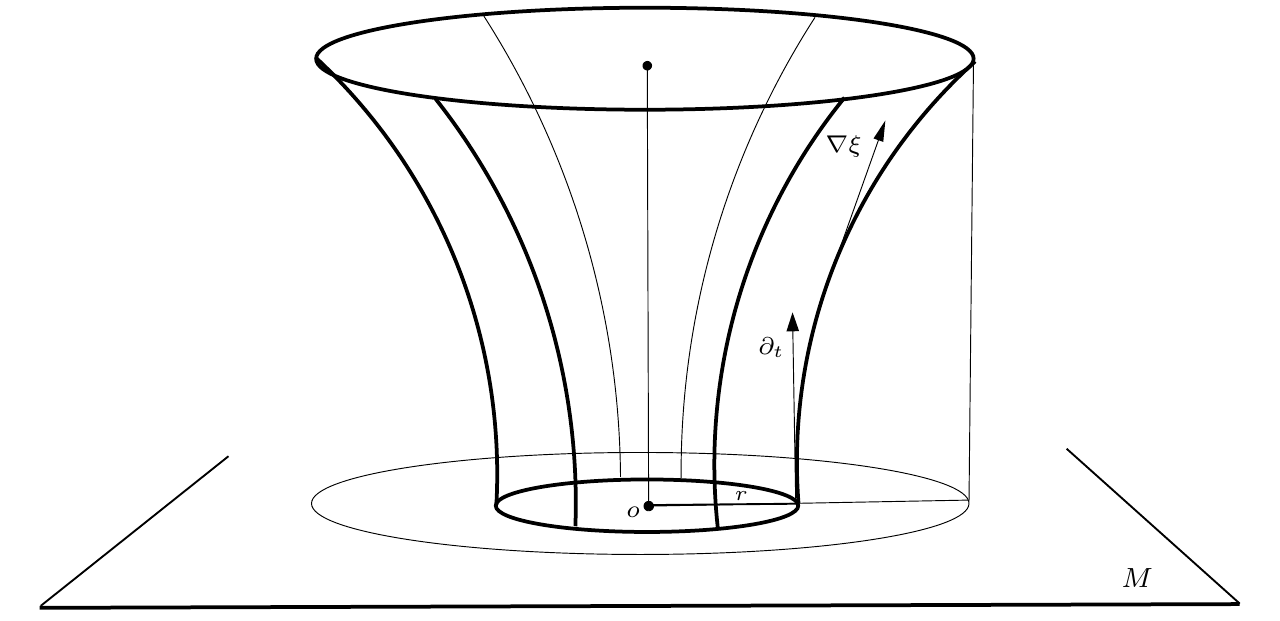}
\caption{The rotational half-catenoid $\Sigma_r'$}
\label{fig-cat}
\end{figure}
Furthermore, $\Sigma_r'$ is clearly a  graph over $M-\overbar{B_r(o)}$
contained in $M\times\R_+$ and with boundary $\partial\Sigma_r'=S_r(o).$ In addition,
\[
a'(s)=\frac{\rho(s)}{\sqrt{1-\rho^2(s)}}\rightarrow +\infty \quad\text{as}\quad s\rightarrow r,
\]
which, together with \eqref{eq-thetaparallel}, gives that $\theta(s)\rightarrow 0$ as $s\rightarrow r.$
Hence,
the tangent spaces  of $\Sigma_r'$ along any  trajectory of $-\nabla\xi$ on $\Sigma_r'$ converge to a vertical
space (i.e., parallel to $\partial_t$) at a point on $\partial\Sigma_r'=S_r(o)$ (see Fig. \ref{fig-cat}).

Now, let $\Sigma_r''\subset M\times\R$ be the reflection of $\Sigma_r'$ with respect to $M\times\{0\}.$ Then,
$\Sigma_r''$ is also a  rotational catenoid in $M\times\R$
with boundary $\partial\Sigma_r''=S_r(o),$  which implies that
it can be ``glued'' together with $\Sigma_r'$ along
$S_r(o),$ that is, we can define
\[
\Sigma_r:={\rm closure}\,(\Sigma_r') \cup {\rm closure}\,(\Sigma_r'').
\]

Since the tangent spaces of $\Sigma_r'$ and $\Sigma_r''$ are vertical along
$S_r(o),$ we have that $\Sigma_r$ is smooth. Moreover, being a geodesic sphere, $S_r(o)$
is a $C^\infty$  manifold. Also,
the trajectories of $\nabla\xi$ on $\Sigma_r$ are geodesics (see \cite[Lemma 1]{tojeiro}) --- so, they are $C^\infty$ as well ---
and any of them intersects $S_r(o)$ transversally. Therefore,
$\Sigma_r$ is a $C^\infty$ properly embedded rotational catenoid in $M\times\R$
which is clearly homeomorphic to $\s^{n-1}\times\R$ and
symmetric with respect to  $M\times\{0\}.$
\end{proof}

The above theorem and
the considerations of  Example \ref{exam-damekricci} give the following result.
%including the symmetric Damek-Ricci spaces, whose geodesic spheres
%are all isoparametric, as we have seen. the following result.

\begin{corollary}
Let $M$ be a symmetric Damek-Ricci space. Then, there exists a one-parameter family of properly
embedded rotational catenoids in $M\times\R$  which are  all
homeomorphic to  \,$\s^{n-1}\times\R$ and  symmetric with respect to
$M\times\{0\}.$
\end{corollary}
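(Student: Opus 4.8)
The plan is to deduce this statement directly from Theorem \ref{th-completecatenoids}, so that the only thing to be done is to check that a symmetric Damek-Ricci space $M$ satisfies the two hypotheses of that theorem: that $M$ is a Hadamard manifold, and that all of its geodesic spheres are isoparametric.

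For the first point, I would recall that the symmetric Damek-Ricci spaces are precisely the rank-one Riemannian symmetric spaces of noncompact type, that is, the real hyperbolic spaces $\h^n$, the complex hyperbolic spaces $\C\h^n$, the quaternionic hyperbolic spaces, and the octonionic hyperbolic plane. Each of these is complete and simply connected, and its sectional curvature is pinched between two negative constants; in particular it is nonpositive. Hence $M$ is a Hadamard manifold, as required in Theorem \ref{th-completecatenoids}.

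For the second point, I would invoke the fact already recorded in Example \ref{exam-damekricci}: in a symmetric Damek-Ricci space, every geodesic sphere of arbitrary radius is isoparametric --- indeed, it has constant principal curvatures. Thus all geodesic spheres of $M$ are isoparametric, and the family of parallel hypersurfaces $\{f_s\}_{s\in(r,+\infty)}$ formed by concentric geodesic spheres that is used in the proof of Theorem \ref{th-completecatenoids} is entirely isoparametric. With both hypotheses verified, Theorem \ref{th-completecatenoids} produces a one-parameter family of properly embedded rotational catenoids in $M\times\R$, each homeomorphic to $\s^{n-1}\times\R$ and symmetric with respect to $M\times\{0\}$, which is exactly the assertion.

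Since the argument reduces to citing Theorem \ref{th-completecatenoids} together with the classification of symmetric Damek-Ricci spaces, there is no substantial obstacle here. The one point worth being careful about is that the statement of Example \ref{exam-damekricci} in the symmetric case applies to geodesic spheres of \emph{every} radius --- as opposed to the non-symmetric case, where only spheres of small radius are known to be isoparametric --- so that Theorem \ref{th-completecatenoids} can indeed be applied on the full range $s\in(r,+\infty)$ used there.
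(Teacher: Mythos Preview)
Your proposal is correct and follows exactly the paper's own approach: the corollary is obtained directly from Theorem \ref{th-completecatenoids} together with the facts recorded in Example \ref{exam-damekricci} about symmetric Damek-Ricci spaces. Your added remarks about why these spaces are Hadamard and why the isoparametricity holds for spheres of all radii are accurate and make the deduction explicit.
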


Assume $M=\R^n$ and let  $\Sigma_r$ be a rotational catenoid as in  Theorem \ref{th-completecatenoids}.
When $n=2$,   $\Sigma_r$ is a standard catenoid of
$\R^3$ obtained by  rotating a catenary  about a fixed axis.
For the half catenoid $\Sigma_r'$ in $\R^n\times\R,$ one has
\[
a(s)=\int_{r}^{s}\frac{r^{n-1}}{\sqrt{\tau^{2n-2}-r^{2n-2}}}d\tau.
\]
It is easily checked that this function is bounded for $n\ge 3.$ So, in this case,
for any $r>0$, the rotational catenoid $\Sigma_r$ is contained in a ``slab'' determined by
two horizontal sections. For $n=2,$ we have
\[a(s)=r\log\left(\frac{s+\sqrt{s^2-r^2}}{r}\right), \,\, s>r,\]
which  is clearly an unbounded function.

In $\h^n\times\R,$  the  height function of any $\Sigma_r$ is \emph{uniformly bounded}.
More precisely, given $n\ge 2,$ for any $r>0,$ $\Sigma_r$ is contained
in a slab of width $\pi/(n-1).$ Indeed, in this setting, the mean curvature of $f_s$ is
$H_s=(1-n)\coth s,$ which gives, for $s\in (r,+\infty),$
\[
\rho(s)=\exp\left(\int_r^sH_\tau d\tau\right)=\exp\left((1-n)\int_r^s\coth\tau d\tau\right)=\left(\frac{\sinh r}{\sinh s}\right)^{n-1}\,\cdot
\]
Thus, the function $a$ which defines $\Sigma_r'$ is
\[
a(s)=\int_{r}^{s}\frac{\rho(\tau)}{\sqrt{1-\rho^2(\tau)}}d\tau=\sinh^{n-1}(r)\int_r^s(\sinh^{2n-2}(\tau)-\sinh^{2n-2}(r))^{-1/2}d\tau.
\]
Applying, in the last integral, the change of variables $v=\sinh\tau/\sinh r,$ we get
\[
a(s)=\sinh r\int_{1}^{\frac{\sinh s}{\sinh r}}(v^{2n-2}-1)^{-1/2}(1+(\sinh^2r)v^2)^{-1/2}dv.
\]
However, $(1+(\sinh^2r)v^2)^{-1/2}<((\sinh r)v)^{-1},$ which implies that
\[
a(s)\le\int_{1}^{\frac{\sinh s}{\sinh r}}\frac{dv}{v\sqrt{v^{2n-2}-1}}=
\frac{1}{n-1}\arctan\sqrt{v^{2n-2}-1}\bigg\rvert_{1}^{\frac{\sinh s}{\sinh r}}\le\frac{\pi}{2(n-1)}\,\cdot
\]

\begin{remark}
In \cite{berard-saearp}, the authors constructed
the rotational catenoids $\Sigma_r$ in $\h^n\times\R$ by rotating suitable curves about an axis. They also
obtained the bound $\pi/2(n-1)$ for the height of the half catenoids $\Sigma_r'.$
\end{remark}

Next, we show that $\s^n\times\R$ admits a  one-parameter family of
rotational catenoids as well.

\begin{theorem}  \label{th-delaunaytype}
There exists a one-parameter family \,$\{\Sigma_r\,;\,0<r<\,{\rm \pi/2}\,\}$
of  properly embedded Delaunay-type rotational catenoids
in $\s^n\times\R$, that is, each $\Sigma_r$ is  periodic, homeomorphic to \,$\s^{n-1}\times\R$,
and has unduloids as the trajectories of the gradient of its  height function.
\end{theorem}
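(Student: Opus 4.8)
The plan is to imitate the proof of Theorem~\ref{th-completecatenoids}, the one genuinely new point being that the relevant family of geodesic spheres of $\s^n$ is parametrized by a \emph{bounded} interval, so the generating curve must turn around at both of its ends rather than run off to infinity. Fix $o\in\s^n$, let $o'$ denote its antipodal point, and fix $r\in(0,\pi/2)$. For $s\in(r,\pi-r)$ let $f_s\colon\s^{n-1}\to\s^n$ be the geodesic sphere of radius $s$ centered at $o$, with its outward unit normal; these form an isoparametric family (Example~\ref{exam-spaceforms}) with constant mean curvature $H_s=-(n-1)\cot s$. Since $\int_r^s H_\tau\,d\tau=(n-1)\ln(\sin r/\sin s)$, the function $\rho(s):=(\sin r/\sin s)^{n-1}$ solves $y'=H_sy$ and satisfies $0<\rho(s)<1$ on $(r,\pi-r)$, with $\rho(s)\to1$ as $s\to r^{+}$ and as $s\to(\pi-r)^{-}$. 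Set $a(s):=\int_r^s \rho(\tau)\,(1-\rho(\tau)^2)^{-1/2}\,d\tau$. Because $\rho$ is monotone near each endpoint with $\rho'(r)=-(n-1)\cot r\neq0$ and $\rho'(\pi-r)=(n-1)\cot r\neq0$, the same change of variable (taking $\rho$ as the integration variable) used in Theorem~\ref{th-completecatenoids} bounds this improper integral near each endpoint by a constant multiple of $\int(1-\rho^2)^{-1/2}\,d\rho$; hence $a$ is well defined, extends continuously to $[r,\pi-r]$, and satisfies $a(r)=0$ and $a(\pi-r)=:H_0>0$.

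By Theorems~\ref{th-parallel} and \ref{th-parallelcmc} applied with $H=0$ (the lower limit $r$ differs from an interior base point only by an additive constant, i.e.\ a vertical translation), the $(f_s,a)$-graph
\[
\Sigma'\ :=\ \bigl\{\,(f_s(p),a(s))\in\s^n\times\R \ ;\ p\in\s^{n-1},\ s\in(r,\pi-r)\,\bigr\}
\]
is a minimal hypersurface, rotational about $\{o\}\times\R$, and it is an embedded graph over the annular region $A:=\s^n\setminus\bigl(\overbar{B_r(o)}\cup\overbar{B_r(o')}\bigr)$. From $a'(s)=\rho(s)(1-\rho(s)^2)^{-1/2}\to+\infty$ and \eqref{eq-thetaparallel} we get $\theta(s)\to0$ as $s\to r^{+}$ and as $s\to(\pi-r)^{-}$, so $\overbar{\Sigma'}$ contains the geodesic spheres $S_r(o)\times\{0\}$ and $S_r(o')\times\{H_0\}=S_{\pi-r}(o)\times\{H_0\}$, along which the tangent spaces of $\Sigma'$ are vertical. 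I would then propagate $\Sigma'$ by successive reflections across the horizontal sections $\s^n\times\{kH_0\}$, $k\in\Z$: writing $\iota_c$ for the reflection $(q,t)\mapsto(q,2c-t)$, put $P_0:=\overbar{\Sigma'}$, $P_{-1}:=\iota_0(P_0)$, and $P_{k+1}:=\iota_{(k+1)H_0}(P_k)$ for $k\ge0$ (and analogously for $k<0$). Each $P_k$ is then a minimal graph over $A$ lying in heights $[kH_0,(k+1)H_0]$, consecutive pieces $P_k,P_{k+1}$ share a copy of $S_r(o)$ or of $S_r(o')$, and the heights of the $P_k$ exhaust $\R$. Define $\Sigma_r:=\bigcup_{k\in\Z}P_k$.

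The remaining point, and where the actual work lies, is that $\Sigma_r$ is a $C^\infty$ hypersurface. Along each gluing locus the two adjacent pieces meet a $C^\infty$ hypersurface of $\s^n\times\R$ (a geodesic sphere at a fixed height) with vertical tangent spaces, and the trajectories of $\nabla\xi$ on $\Sigma_r$ are geodesics (hence $C^\infty$) crossing it transversally, so by the argument already used in Theorem~\ref{th-completecatenoids} the union is smooth. Concretely, near $s=r$ one checks (substituting successively $w=1-\rho$ and $w=v^2$) that $a$ equals an odd $C^\infty$ function of $\sqrt{1-\rho(s)}$ with nonzero derivative at the origin, so $a(s)^2$ is a $C^\infty$ function of $s$ with a simple zero at $s=r$; hence the even reflection of the graph $t=a(d(o,\cdot))$ across $\s^n\times\{0\}$ is, near $S_r(o)\times\{0\}$, the zero set of $d(o,\cdot)-r-F(t^2)$ for some smooth $F$, so it is smooth there, and likewise at $s=\pi-r$.

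Finally, I would read off the asserted properties. Each $P_k$ is a compact graph over $A\cong\s^{n-1}\times[r,\pi-r]$, distinct non-consecutive pieces are disjoint, and the height function is proper on $\Sigma_r$, so $\Sigma_r$ is properly embedded; being an end-to-end union of copies of $\s^{n-1}\times[0,1]$ glued along $\s^{n-1}$'s, it is homeomorphic to $\s^{n-1}\times\R$; and the vertical translation by $2H_0$ carries $P_k$ onto $P_{k+2}$, hence $\Sigma_r$ onto itself, so $\Sigma_r$ is periodic. A trajectory of $\nabla\xi$ on $\Sigma_r$, pushed down to the meridian half-plane by $(q,t)\mapsto(d(o,q),t)$, is the concatenation of the arcs $\tau\mapsto(\tau,\pm a(\tau)+\mathrm{const})$: a smooth periodic curve oscillating between the meridian lines $\{d(o,\cdot)=r\}$ and $\{d(o,\cdot)=\pi-r\}$ with smooth turning points, i.e.\ an unduloid-type profile — which is precisely what makes $\Sigma_r$ a Delaunay-type rotational catenoid. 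Letting $r$ range over $(0,\pi/2)$ yields the desired one-parameter family, and the only real obstacle is the smoothness of the gluing at the geodesic spheres where $\theta=0$, to be handled exactly as in Theorem~\ref{th-completecatenoids}.
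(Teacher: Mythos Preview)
Your proof is correct and follows essentially the same approach as the paper: the same family of geodesic spheres, the same solution $\rho(s)=(\sin r/\sin s)^{n-1}$, the same height function $a$, the same endpoint analysis via $\rho'\ne 0$ borrowed from Theorem~\ref{th-completecatenoids}, and the same reflection scheme. You are in fact more explicit than the paper about the reflection process, the smoothness at the gluing loci, and the topological and periodicity conclusions.
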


\begin{proof}
Let $f_s:\s^{n-1}\rightarrow\s^n$, $s\in(0,\pi),$ be a family of concentric geodesic spheres of
$\s^n$ with center at $o\in\s^n$ and outward normal orientation, that is, the mean curvature of
$f_s$ is $H_s=-(n-1)\cot(s).$ Given $r\in(0,\pi/2),$ consider
the function
\[
\rho_r(s)=\left(\frac{\sin r}{\sin s}\right)^{n-1}, \,\,\, s\in [r,\pi-r],
\]
which can be verified to be a solution of $y'=H_sy$ satisfying $0<\rho_r|_{(r,\pi-r)}<1.$

Now, let us define the function
\[  %\begin{equation}\label{eq-phi2}
a_r(s)=\int_{r}^{s}\frac{\rho_r(u)}{\sqrt{1-\rho_r^2(u)}}du, \,\,\, s\in (r,\pi-r).
\]  %\end{equation}
Since $\rho_r'(r)=H_{r}\ne 0$ and $\rho_r'(\pi-r)=H_{\pi-r}\ne 0,$ we can proceed as in the proof of Theorem \ref{th-completecatenoids}
to conclude that $a_r$ is well defined and  bounded.
In particular, $t_1=a_r(r)$ and  $t_2=a_r(\pi-r)$ are well defined.
\begin{figure}[htbp]
\includegraphics{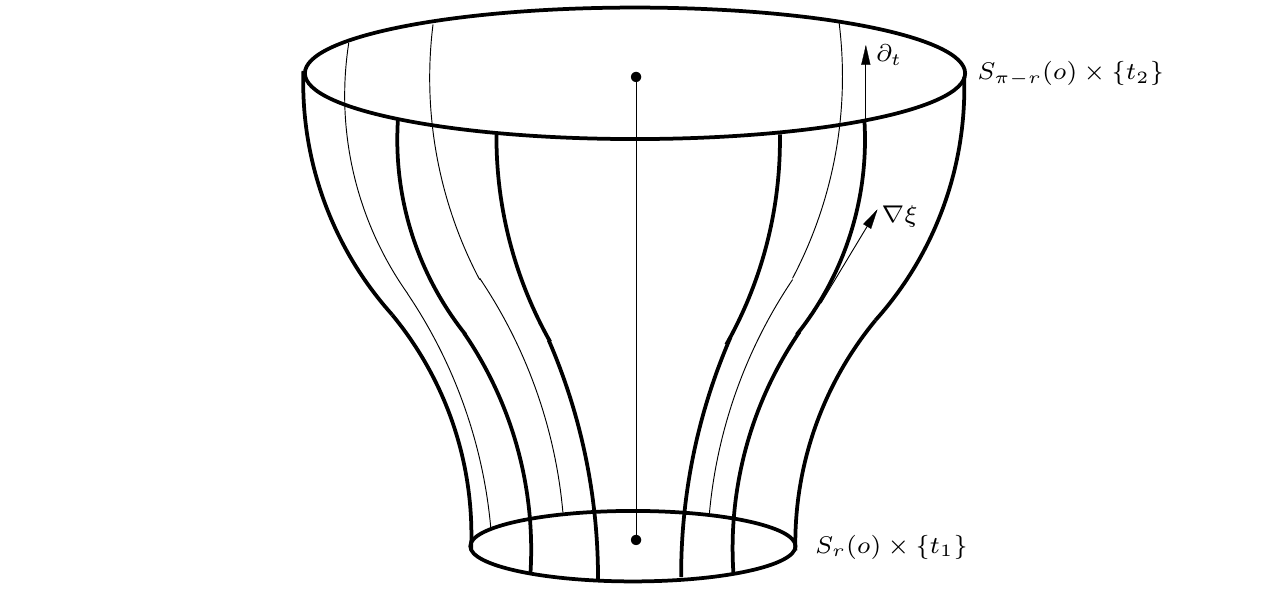}
\caption{\small The ``block'' $\Sigma_r'$ of the rotational catenoid $\Sigma_r$.}
\label{fig-minimaldelaynay}
\end{figure}

It follows from the above that $\Sigma_r'$ is homeomorphic to
$\s^{n-1}\times (r,\pi-r)$ and has boundary
$\partial\Sigma_r'=S_r(o)\times \{t_1\}\cup S_{\pi-r}(o)\times \{t_2\}$
(Fig. \ref{fig-minimaldelaynay}).
Also,
the tangent spaces of $\Sigma_r'$ are vertical along its boundary $\partial\Sigma_r',$
for  $\rho_r(r)=\rho_r(\pi-r)=1.$
Therefore,
from successive  reflections of $\Sigma_r'$
with respect to suitable horizontal sections of $\s^n\times\R$, we obtain a
periodic properly embedded
rotational catenoid $\Sigma_r$  homeomorphic to $\s^{n-1}\times\R.$
\end{proof}

\begin{remark}
The above Delaunay-type catenoids were also obtained in \cite{pedrosa-ritore}.  % by rotation of an appropriate curve about a fixed axis.
\end{remark}

Given a Hadamard manifold $M,$ recall that
the \emph{Busemann function} $\mathfrak b_\gamma(p)$ of $M$ corresponding to an arclength geodesic
$\gamma\colon(-\infty,+\infty)\rightarrow M$ is defined as
\[
\mathfrak b_\gamma(p):=\lim_{s\rightarrow +\infty}({\rm dist}_M(p,\gamma(s))-s), \,\,\, p\in M.
\]

The level sets $\mathscr{H}_s:=\mathfrak b_\gamma^{-1}(s)$ of a Busemann function
$\mathfrak b_\gamma$ are called \emph{horospheres} of $M.$
In this setting, as is well  known, $\{\mathscr H_s\,;\, s\in(-\infty, +\infty)\}$ is a parallel family
which foliates  $M.$ Furthermore,
any geodesic of $M$ which is asymptotic to $\gamma$ (i.e., with  the same point on the asymptotic boundary
$M(\infty)$ of $M$) is orthogonal to each horosphere $\mathscr H_s$\,.
We also remark that horospheres  are submanifolds of class (at least) $C^2$ (see, e.g., \cite[Proposition 3.1]{heintze-hof}).

In hyperbolic space $\h^n,$ any horosphere is totally umbilical with constant principal curvatures
equal to $1.$ Also, as shown  in \cite[Proposition-(vi), pg. 88]{berndtetal}, except for
hyperbolic space\footnote{In \cite{berndtetal}, hyperbolic space is not considered a Damek-Ricci space.},
any Damek-Ricci space contains  a family  $\{\mathscr H_s\,;\, s\in(-\infty, +\infty)\}$ of parallel
horospheres such that  the principal curvatures
of each $\mathscr H_s$ are $1/2$ and $1$, both with constant multiplicities.

Let us see now that, when $M$ is a Hadamard manifold whose horospheres are properly embedded
and  isoparametric with
the same mean curvature, as in the above examples, one can construct properly embedded
vertical catenoids in $M\times\R$ with special properties.

\begin{theorem}  \label{th-H-horospheretype}
Let $\{\mathscr H_s\,;\, s\in(-\infty, +\infty)\}$ be a parallel family of
properly embedded horospheres of constant
mean curvature $H_0>0$ in a Hadamard manifold $M.$ Then,
there exists a properly embedded vertical catenoid $\Sigma$ %$\Sigma$
in $M\times\R$ of class at least $C^2$ which is homeomorphic to \,$\R^n$.
Furthermore, $\Sigma$ is foliated by horospheres, is symmetric with respect
to $M\times\{0\},$  and is asymptotic  to both
$M\times\{-\frac{\pi}{2H_0}\}$ and $M\times\{\frac{\pi}{2H_0}\}.$
\end{theorem}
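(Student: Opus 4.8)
The plan is to construct $\Sigma$ as an $(f_s,a)$-graph built from the given horospheres, following the scheme of the proof of Theorem~\ref{th-completecatenoids} with the parallel family of geodesic spheres there replaced by $\{\mathscr H_s\}$. Since $\{\mathscr H_s\,;\,s\in(-\infty,+\infty)\}$ foliates $M$, the normal exponential map of $\mathscr H_0$ is a diffeomorphism onto $M$ with no focal points, so the horospheres constitute a parallel family $f_s\colon\mathscr H_0\to M$, $f_s(\mathscr H_0)=\mathscr H_s$, in the sense of Section~\ref{sec-canonicaldirection}. Reversing the orientation if necessary, we may assume each $f_s$ has constant mean curvature $H_s=-H_0<0$, and we apply Theorem~\ref{th-parallelcmc} with $H=0$. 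Then $\rho(s):=e^{-H_0 s}$, $s\in[0,+\infty)$, solves $y'=H_sy$ and satisfies $\rho(0)=1$, $0<\rho(s)<1$ for $s>0$, and I would set
\[
a(s):=\int_{0}^{s}\frac{\rho(u)}{\sqrt{1-\rho(u)^2}}\,du,\qquad s\in(0,+\infty).
\]

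As in Theorem~\ref{th-completecatenoids}, the first point to settle is that this (improper) integral converges at $u=0$: since $\rho'(0)=-H_0\ne0$, the estimate used there applies and gives that $a$ is well defined with $a(s)\to0$ as $s\to0^{+}$. Next, the substitution $v=\rho(u)=e^{-H_0 u}$ yields $\int_{0}^{+\infty}\rho\,(1-\rho^2)^{-1/2}\,du=\frac{1}{H_0}\int_{0}^{1}(1-v^2)^{-1/2}\,dv=\frac{\pi}{2H_0}$, so $a$ is an increasing diffeomorphism of $(0,+\infty)$ onto $(0,\pi/(2H_0))$. By Theorems~\ref{th-parallel} and~\ref{th-parallelcmc}, the $(f_s,a)$-graph
\[
\Sigma_+:=\{(f_s(p),a(s))\in M\times\R\,;\,p\in\mathscr H_0,\ s>0\}
\]
is then a minimal hypersurface with $\nabla\xi$ as a principal direction; by \eqref{eq-thetaparallel} one has $\theta=(1-\rho^2)^{1/2}$, which is nonconstant, by \eqref{eq-lambda} the principal curvature in the direction $\nabla\xi$ is $\lambda=a''\theta^{3}=\rho'=-H_0\rho\ne0$, and \eqref{eq-meancurvatureslices} gives $H_{\Sigma_t}=\lambda/\sqrt{1-\theta^2}=-H_0$; hence $\Sigma_+$ is a vertical catenoid. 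It is a graph over $M$ minus the closed horoball bounded by $\mathscr H_0$, contained in the slab $M\times(0,\pi/(2H_0))$, has $\theta\to0$ (i.e.\ vertical tangent planes) along $\mathscr H_0\times\{0\}$, and has $\xi\to\pi/(2H_0)$ as $s\to+\infty$, so it is asymptotic to $M\times\{\pi/(2H_0)\}$.

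Next I would let $\Sigma_-$ be the reflection of $\Sigma_+$ across $M\times\{0\}$ and set $\Sigma:={\rm closure}\,(\Sigma_+)\cup{\rm closure}\,(\Sigma_-)$, glued along $\mathscr H_0\times\{0\}$. Since ${\rm closure}\,(\Sigma_+)$ is homeomorphic to $\mathscr H_0\times[0,+\infty)$ and a horosphere of a Hadamard manifold $M^n$ is homeomorphic to $\R^{n-1}$, the glued hypersurface is homeomorphic to $\R^{n}$, symmetric with respect to $M\times\{0\}$, foliated by the horospheres $\mathscr H_s$ (its horizontal sections), and asymptotic to both $M\times\{-\pi/(2H_0)\}$ and $M\times\{\pi/(2H_0)\}$. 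Properness and embeddedness follow because each $f_s$ is a proper embedding, distinct values of $s$ lie on distinct horizontal sections of $\Sigma_\pm$, and $a$ is proper onto its image; moreover $\Sigma$ remains a vertical catenoid across the gluing circle, where $\theta=0$, $\nabla\xi=\partial_t$ is still a principal direction, and $\lambda=-H_0\ne0$ by continuity.

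The main obstacle --- and the reason the statement only claims class $C^2$ --- is the regularity of $\Sigma$ across $\mathscr H_0\times\{0\}$. Here I would argue, exactly as at the end of the proof of Theorem~\ref{th-completecatenoids}, that the trajectories of $\nabla\xi$ on $\Sigma_\pm$ project to geodesics of $M$ (see \cite[Lemma~1]{tojeiro}), hence are smooth curves meeting $\mathscr H_0\times\{0\}$ transversally with vertical tangent lines; combined with $\mathscr H_0$ being a submanifold of class (at least) $C^{2}$ (cf.\ \cite[Proposition~3.1]{heintze-hof}), this forces $\Sigma$ to be a properly embedded hypersurface of class $C^{2}$. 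This is the only place where the limited smoothness of horospheres enters, and the step requiring care beyond the computations already carried out for Theorem~\ref{th-completecatenoids}.
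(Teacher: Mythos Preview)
Your proof is correct and follows essentially the same approach as the paper's: the only differences are that the paper keeps the orientation with $H_s=H_0>0$ and integrates over $s\in(-\infty,0]$ (obtaining the explicit antiderivative $a(s)=\frac{1}{H_0}(\arcsin(e^{H_0 s})-\pi/2)$), whereas you flip the normal and integrate over $s\in[0,+\infty)$, which amounts to the reparametrization $s\mapsto -s$. One tiny slip: you refer to the ``gluing circle'', but $\mathscr H_0\times\{0\}$ is a horosphere, not a circle.
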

\begin{proof}
%Let $p_\infty$ be the center of the horospheres $\mathscr H_s$\,.
For each $s\in (-\infty,\infty)$, consider the isometric immersion
$f_s:\R^{n-1}\rightarrow M^n$  such that $f_s(\R^{n-1})=\mathscr H_s$\,.
Define  the function
\[
\rho(s):=e^{H_0s}, \,\,\, s\in (-\infty,0],
\]
and notice that $\rho$  is a solution of $y'=H_0y$
satisfying
\[
0<\rho(s)< 1=\rho(0) \,\,\, \forall s\in(-\infty, 0).
\]
Thus, by Theorem \ref{th-parallelcmc}, defining
\[
a(s):=\int_{0}^{s}\frac{\rho(u)}{\sqrt{1-\rho^2(u)}}du=\frac{1}{H_0}(\arcsin(e^{H_0s})-\pi/2),
\]
one has that the $(f_s,a)$-graph $\Sigma'$
is a minimal hypersurface of $M\times\R.$ In addition,
\[
\lim_{s\rightarrow-\infty}a(s)=-\frac{\pi }{2H_0}\,\cdot
\]

\begin{figure}[htbp]
\includegraphics{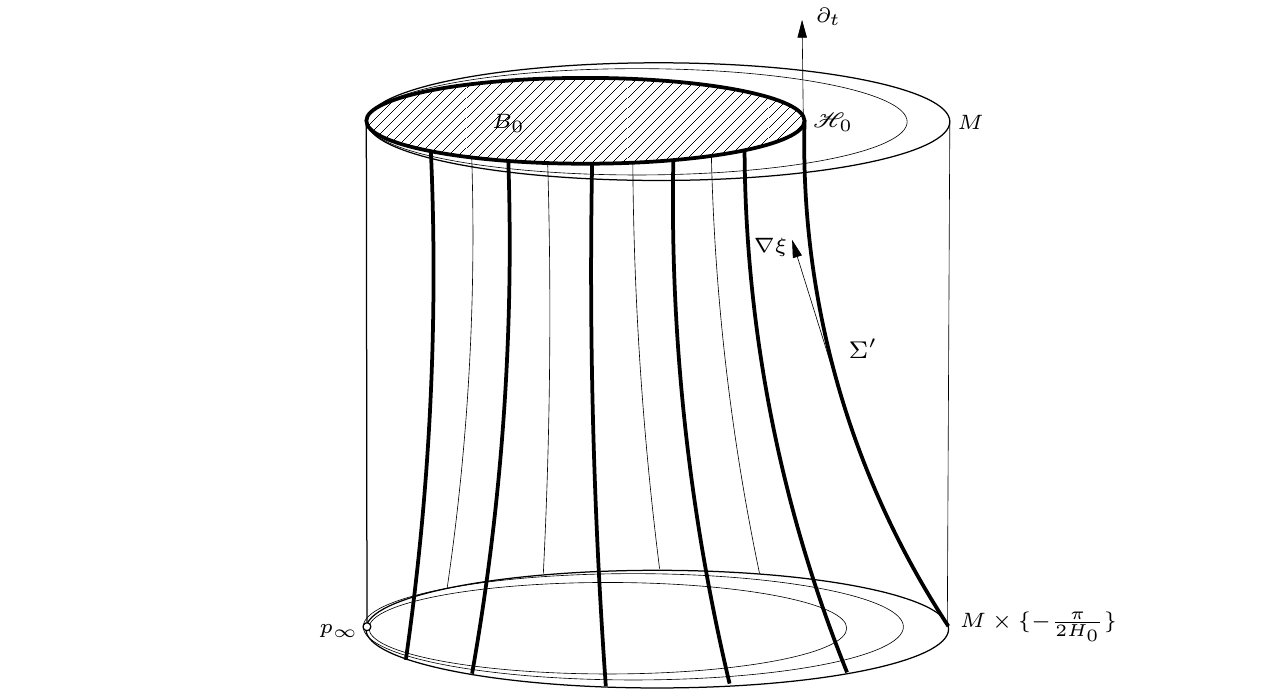}
\caption{\Small The half-catenoid $\Sigma'$ foliated by horospheres.}
\label{fig-Horograph}
\end{figure}

Hence, denoting by $B_0$ the   mean convex side of $\mathscr H_0$\,,
and identifying $M\times\{0\}$ with $M,$
it follows  that $\Sigma'$ is
a minimal graph over $M-B_0$ which has
boundary $\partial\Sigma'=\mathscr H_0$ and
is asymptotic to $M\times\{-\frac{\pi}{2H_0}\}$
(see Fig. \ref{fig-Horograph}).
In particular, $\Sigma'$ is homeomorphic to $\R^n$.

We also have that $\rho(0)=1.$ So, as in the previous theorems,
any trajectory of $\nabla\xi$ on $\Sigma'$ meets  $\partial\Sigma'$ orthogonally.
Therefore, setting $\Sigma''$ for the reflection of $\Sigma'$ with respect to
$M\times\{0\}$, and defining
$
\Sigma:={\rm closure}\,(\Sigma')\cup {\rm closure}\,(\Sigma''),
$
we can argue  just as before
and conclude that $\Sigma$ is a properly embedded  $C^2$-differentiable
(for horospheres are, at least, $C^2$ differentiable)
vertical catenoid  of $M\times\R$ which has all the stated properties.
\end{proof}

In our next result, we consider more general isoparametric foliations of Hadamard manifolds.
%of hyperbolic space,  they include the foliation  by  equidistant hypersurfaces of a fixed totally geodesic hypersurface.

\begin{theorem}  \label{th-Ftype}
Let $\mathscr F:=\{f_s:\Sigma_0\rightarrow M,$ $s\in (-\infty,+\infty)\}$ be an
isoparametric family of hypersurfaces in  a Hadamard manifold \,$M^n.$ Assume that:
\begin{itemize}[parsep=1ex]
\item[\rm i)] For all $s\in (-\infty,+\infty),$ $f_s$ is a $C^k$ $(k\ge 2)$  proper embedding
with positive mean curvature $H_s\,.$
  \item [\rm ii)] $\mathscr F$ foliates $M,$   i.e., $M=\bigcup f_s(\Sigma_0),$ \,$s\in (-\infty,+\infty).$
 % \item [iii)] For each integer $r\in\{1,\dots ,n-1\},$ there are  constants  $C_1^r>C_0^r\ge 0$  such that
 %\,$C_0^r<H_{r}^s\le C_1^r \,\,\, \forall s\in (-\infty,+\infty).$ (We also set  $C_0^0=1$ and $C_0^n=0.$)
\end{itemize}
Then,  there exists  a properly embedded
$C^k$ catenoid $\Sigma$ in $M\times\R$  which is homeomorphic to
$\Sigma_0\times\R.$ Furthermore, $\Sigma$ is foliated by (vertical translations of)
the leaves of \,$\mathscr F$ and is symmetric with respect to $M\times\{0\}.$
\end{theorem}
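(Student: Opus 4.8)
The plan is to follow closely the proofs of Theorems~\ref{th-completecatenoids} and \ref{th-H-horospheretype}: construct a ``half-catenoid'' $\Sigma'$ as an $(f_s,a)$-graph over a half of the foliation $\mathscr F$, show its tangent spaces become vertical along the leaf $f_0(\Sigma_0)$, and then glue $\Sigma'$ to its reflection across $M\times\{0\}$. First I would apply Theorem~\ref{th-parallelcmc} with $H=0$: on the half-line $(-\infty,0]$, set
\[
\rho(s):=\exp\!\left(\int_0^s H_u\,du\right),
\]
so that $\rho$ solves $y'=H_sy$, $\rho(0)=1$, and, since each $H_s>0$, $0<\rho(s)<1$ for all $s<0$. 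Define
\[
a(s):=\int_0^s\frac{\rho(u)}{\sqrt{1-\rho(u)^2}}\,du,\qquad s\in(-\infty,0).
\]
The integrand is singular at $u=0$ because $\rho(0)=1$, but $\rho'(0)=H_0\rho(0)=H_0\neq 0$ (here $H_0:=H_s|_{s=0}$), so the very same estimate used in the proof of Theorem~\ref{th-completecatenoids} shows this improper integral converges; hence $a$ is well defined on $(-\infty,0)$ and extends continuously with $a(0)=0$. By Theorems~\ref{th-parallel} and \ref{th-parallelcmc}, the $(f_s,a)$-graph $\Sigma'$ over $s\in(-\infty,0)$ is a minimal hypersurface of $M\times\R$; moreover $\theta=(1+(a')^2)^{-1/2}$ is nonconstant (as $\rho$ is strictly increasing), $\nabla\xi$ is a principal direction, and by \eqref{eq-lambda} and \eqref{eq-cmc} its principal curvature equals $\lambda=H_s\rho\neq 0$, so $\Sigma'$ is in fact a vertical catenoid.

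Next I would pin down the global shape of $\Sigma'$. Since $\mathscr F$ foliates $M$, the map $(p,s)\mapsto f_s(p)$ is a diffeomorphism of $\Sigma_0\times\R$ onto $M$; hence $\pi_{\scriptscriptstyle M}$ restricts to a diffeomorphism of $\Sigma'$ onto the open set $\Omega_0:=\bigcup_{s<0}f_s(\Sigma_0)=M\setminus\bigcup_{s\geq 0}f_s(\Sigma_0)$, and $\Sigma'=\mathrm{graph}(u)$ for a function $u$ on $\Omega_0$ with $\partial\Omega_0=f_0(\Sigma_0)$ and $\partial\Sigma'=f_0(\Sigma_0)\times\{0\}$. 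From $a'(s)=\rho(s)(1-\rho(s)^2)^{-1/2}\to+\infty$ as $s\to 0^-$ and \eqref{eq-thetaparallel}, one gets $\theta(s)\to 0$, so the tangent spaces of $\Sigma'$ along $\partial\Sigma'$ are vertical. Let $\Sigma''$ be the image of $\Sigma'$ under the reflection $(p,t)\mapsto(p,-t)$ of $M\times\R$; it is again a vertical catenoid with the same boundary $f_0(\Sigma_0)\times\{0\}$ and vertical tangent spaces there. I then set $\Sigma:=\mathrm{closure}(\Sigma')\cup\mathrm{closure}(\Sigma'')$.

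Finally I would verify the asserted properties of $\Sigma$, arguing as in Theorems~\ref{th-completecatenoids}--\ref{th-H-horospheretype}. The gluing locus $f_0(\Sigma_0)$ is $C^k$ by hypothesis~(i); the trajectories of $\nabla\xi$ on $\Sigma$ are geodesics (hence $C^k$, by \cite[Lemma~1]{tojeiro}) meeting $f_0(\Sigma_0)$ transversally; and the common vertical tangent spaces together with the symmetry of $\Sigma$ under $(p,t)\mapsto(p,-t)$ force $\Sigma$ to be a $C^k$ embedded hypersurface, which is a vertical catenoid by continuity of the shape operator across $f_0(\Sigma_0)$. By construction $\Sigma$ is foliated by vertical translates of the leaves of $\mathscr F$ and is symmetric with respect to $M\times\{0\}$, and topologically $\Sigma\cong\big(\Sigma_0\times(-\infty,0]\big)\cup_{\Sigma_0\times\{0\}}\big(\Sigma_0\times[0,+\infty)\big)\cong\Sigma_0\times\R$. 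For properness I would check that $\mathrm{closure}(\Sigma')$ is closed in $M\times\R$: a sequence $(f_{s_k}(p_k),a(s_k))$ converging in $M\times\R$ cannot have $s_k\to-\infty$ (its limit would then lie on no leaf of $\mathscr F$, contradicting that $\mathscr F$ foliates $M$) and cannot have $p_k$ leaving every compact set of $\Sigma_0$ (since each $f_{s_k}$ is a proper embedding), so its limit lies in $\mathrm{closure}(\Sigma')$; the same holds for $\Sigma''$, and gluing two closed pieces along their common boundary yields a properly embedded $\Sigma$.

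The step I expect to be the main obstacle is establishing the $C^k$ regularity of $\Sigma$ across the leaf $f_0(\Sigma_0)$ --- where both halves have vertical tangent spaces and the graph representation degenerates --- together with properness without assuming the leaves $f_s(\Sigma_0)$ are compact. Both are handled by the foliation hypothesis and the reflective symmetry, but require the bookkeeping above rather than a bare graph estimate.
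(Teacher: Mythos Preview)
Your proposal is correct and follows essentially the same approach as the paper: the paper's proof defines the very same $\rho(s)=\exp(\int_0^s H_u\,du)$ on $(-\infty,0]$, observes $\rho'(0)=H_0>0$ to justify convergence of $a(s)=\int_0^s\rho(1-\rho^2)^{-1/2}du$ ``as in the preceding proofs,'' obtains the minimal $(f_s,a)$-graph $\Sigma'$ over the concave side $M-B_0$ with vertical tangent spaces along $\partial\Sigma'=f_0(\Sigma_0)\times\{0\}$, and reflects across $M\times\{0\}$ --- exactly your outline, though your write-up supplies considerably more detail (the catenoid verification, the regularity across the seam, and the properness) than the paper's terse paragraph.

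One small wording slip in your properness sketch: if $s_k\to-\infty$ the limit point \emph{does} lie on some leaf $f_{s_*}(\Sigma_0)$ (precisely because $\mathscr F$ foliates $M$); the contradiction is that the diffeomorphism $(p,s)\mapsto f_s(p)$ then forces $s_k\to s_*$, not $-\infty$. The argument is right, only the sentence is phrased backwards.
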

\begin{proof}
Since $H_s>0$ for all $s\in (-\infty,+\infty),$ we have that the function
\[
\rho(s):=\exp\left(\int_{0}^{s}H_udu \right), \,\, s\in (-\infty, 0],
\]
which is  a solution of $y'=H_sy,$ satisfies:
\[
0<\rho(s)< 1=\rho(0) \,\,\, \forall s\in (-\infty, 0).
\]
In addition,  $\rho'(0)=a(0)H_0>0.$ From this, as in the preceding  proofs, we get that
\[
a(s):=\int_{0}^{s}\frac{\rho(u)}{\sqrt{1-\rho^2(u)}}du,  \,\,\, s\in(-\infty, 0),
\]
is a well defined function, i.e., this improper integral is convergent.
So, the $(f_s,a)$-graph $\Sigma'$ is a minimal
%Henceforth, we can proceed just as in the proof of Theorem \ref{th-H-horospheretype} to conclude
graph over $M-B_0$ whose $\nabla\xi$-trajectories meet $\partial\Sigma'=\mathfrak L_0\times\{0\}$ orthogonally.
Here, $B_0\subset M$ is the  mean convex side of $\mathfrak L_0$\,.
In particular, $\Sigma'$ is homeomorphic to $\Sigma_0\times\R$.
Now, by reflecting $\Sigma'$ with respect to $M\times\{0\},$
as we did before, we obtain the desired vertical catenoid of $M\times\R.$
\end{proof}

In hyperbolic space $\h^n,$ the well known families  of equidistant hypersurfaces
satisfy the conditions of Theorem \ref{th-Ftype},
since they   foliate $\h^n$ and have constant mean curvature between $0$ and $1.$
Also, each leaf of such a family is $C^\infty$ and homeomorphic to $\R^n.$ So, we have the following final result, which
was obtained in \cite{daniel} for the particular case $n=2.$

\begin{corollary} \label{cor-equidistants}
Let $\mathscr F:=\{f_s:\R^{n-1}\rightarrow \h^n,$ $s\in (-\infty,+\infty)\}$
be a family of parallel equidistant hypersurfaces in $\h^n.$ Then,
there exists a properly embedded $C^{\infty}$  vertical catenoid in
\,$\h^n\times\R$ which is homeomorphic to $\R^n$. Moreover, $\Sigma$ is
symmetric with respect to $\h^n\times\{0\}$  and is foliated by
(vertical translations of) the leaves of  $\mathscr F.$
\end{corollary}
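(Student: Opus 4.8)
The plan is to deduce Corollary~\ref{cor-equidistants} from Theorem~\ref{th-Ftype} by checking that the equidistant foliation of $\h^n$ is an isoparametric family $\mathscr F=\{f_s:\R^{n-1}\to\h^n\}$ meeting hypotheses (i) and (ii) of that theorem. Fix a totally geodesic hyperplane $P\cong\h^{n-1}\subset\h^n$; in Fermi coordinates about $P$ the metric of $\h^n$ has the warped-product form $ds^2+\cosh^2(s)\,g_P$, so the hypersurface $f_s$ at distance $s$ from $P$ is totally umbilic with principal curvature $\tanh(s)$ relative to the normal pointing away from $P$. Hence each $f_s$ is isoparametric, with constant mean curvature $H_s=(n-1)\tanh(s)$; choosing the leaves on one fixed side of $P$, $H_s$ is a positive constant lying strictly between $0$ and $n-1$ (equivalently, the normalized mean curvature lies in $(0,1)$). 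Moreover each $f_s$ is a $C^\infty$ proper embedding diffeomorphic to $\R^{n-1}$, and the one-sided family foliates the corresponding open half-space of $\h^n$ --- which is exactly the situation already handled, for geodesic spheres, in the proof of Theorem~\ref{th-completecatenoids}.

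With these facts at hand, Theorem~\ref{th-Ftype} applies: one selects a leaf $\mathfrak L_0=f_{s_0}(\R^{n-1})$ to serve as the waist at height $0$, forms the associated $(f_s,a)$-graph $\Sigma'$ with $a$ given by \eqref{eq-parallelcmc} for $H=0$, uses \eqref{eq-thetaparallel} together with $H_{s_0}\neq 0$ to see that $\Sigma'$ is a minimal graph whose tangent planes become vertical along $\partial\Sigma'=\mathfrak L_0$, and finally glues $\Sigma'$ to its mirror image across $\h^n\times\{0\}$. This produces a properly embedded catenoid $\Sigma\subset\h^n\times\R$ of the same regularity as the leaves --- hence $C^\infty$ --- which is foliated by vertical translations of the $f_s(\R^{n-1})$ and is symmetric with respect to $\h^n\times\{0\}$; and since $\Sigma_0=\R^{n-1}$, Theorem~\ref{th-Ftype} gives $\Sigma$ homeomorphic to $\R^{n-1}\times\R\cong\R^n$, as asserted. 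The case $n=2$ recovers the surface constructed by Daniel \cite{daniel}.

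I do not expect a genuine obstacle here, since all the analytic work --- convergence of the improper integral defining $a$, smoothness of the gluing, proper embeddedness --- is already carried out in the proofs of Theorems~\ref{th-parallel}, \ref{th-parallelcmc}, \ref{th-completecatenoids} and \ref{th-Ftype}. The only point that requires a little attention is the bookkeeping of orientations: one must orient the equidistant hypersurfaces by the normal pointing away from $P$ (and work on a single side of $P$) so that hypothesis (i) of Theorem~\ref{th-Ftype} holds with $H_s>0$, the totally geodesic leaf $P$ itself being excluded precisely because its mean curvature vanishes.
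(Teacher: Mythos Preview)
Your proposal is correct and follows the same approach as the paper: the paper simply remarks, in the paragraph immediately preceding the corollary, that the equidistant hypersurfaces in $\h^n$ foliate $\h^n$, are $C^\infty$, diffeomorphic to $\R^{n-1}$, and have constant mean curvature in $(0,1)$, so that Theorem~\ref{th-Ftype} applies directly. Your write-up is in fact more detailed than the paper's---you explicitly compute $H_s=(n-1)\tanh s$ via Fermi coordinates and correctly flag the orientation bookkeeping (working on one side of the totally geodesic hyperplane so that $H_s>0$), a point the paper glosses over.
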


\end{document}